\documentclass{article}
\usepackage[a4paper, total={6.5in, 8.5in}]{geometry}
\usepackage[utf8]{inputenc}
\usepackage{amssymb}
\usepackage{amsthm}
\usepackage[T1]{fontenc}
\usepackage[dvipsnames]{xcolor}
\usepackage{url}
\usepackage{booktabs}
\usepackage{amsfonts}
\usepackage{amsmath}
\usepackage{amssymb}
\usepackage{nicefrac}
\usepackage{bm}
\usepackage{pgfplots}
\usepgfplotslibrary{patchplots}
\pgfplotsset{compat=1.18}
\usepackage{tcolorbox}
\usepackage{mathtools}
\usepackage{amsthm}
\usepackage{authblk}
\usepackage{thmtools}
\usepackage[colorlinks = true, linkcolor=NavyBlue,citecolor=ForestGreen]{hyperref}
\usepackage[nameinlink]{cleveref}
\usepackage{bm}

\usepackage{multirow}

\usepackage{graphicx}
\usepackage{tikz}
\usetikzlibrary{arrows.meta}
\usepackage{xcolor}
\usetikzlibrary{calc}
\usetikzlibrary{decorations.markings}

\usepackage[linesnumbered,ruled,vlined]{algorithm2e}
\SetKwInput{KwInput}{Input}               
\SetKwInput{KwOutput}{Output}    
\SetKwFunction{KwFn}{Function}

\newtheorem{theorem}{Theorem}
\newtheorem{lemma}{Lemma}
\newtheorem{definition}{Definition}

\newtheorem{corollary}{Corollary}
\newtheorem{conjecture}{Conjecture}

\newtheorem{observation}{Observation}

\definecolor{colbg}{HTML}{23373b} 
\definecolor{colB}{HTML}{6699CC} 
\colorlet{col1}{red!60!white}
\colorlet{col3}{green!70!blue}
\colorlet{grey}{black!30!white}
\definecolor{cutRed}{RGB}{220, 25, 25}
\definecolor{arsCyan}{RGB}{0, 200, 230}   
\definecolor{connGreen}{RGB}{0, 180, 0}   
\definecolor{decompGray}{RGB}{150, 150, 150} 

\tikzstyle{vertex}=[circle, draw, fill=black, inner sep=0pt, minimum size=5pt]
\tikzstyle{cvertex}=[circle, thick, draw=colbg, fill=colB, inner sep=0pt, minimum size=7pt]
\tikzstyle{rvertex}=[circle, thick, draw=colbg, fill=col1, inner sep=0pt, minimum size=7pt]
\tikzstyle{ovvertex}=[circle, thick, draw=colbg, fill=col3, inner sep=0pt, minimum size=7pt]
\tikzstyle{gvertex}=[circle, draw, fill=grey, inner sep=0pt, minimum size=7pt]

\tikzstyle{edge}=[line width=1.5pt,black!50!white]
\tikzstyle{redge}=[line width=1.5pt,red]
\tikzstyle{baseEdge}=[line width=1.0pt]
\tikzstyle{ovedge}=[line width=1.5pt,col3]
\tikzstyle{cedge}=[line width=1.5pt, colB]
\tikzstyle{bedge}=[line width=1.5pt,blue]

\tikzset{
    midarrow/.style={
        postaction={decorate},
        decoration={markings, mark=at position 0.55 with {\arrow[very thick]{to}}}
    },
    rdedge/.style={draw=red, thick, midarrow},
    ovdedge/.style={draw=col3, thick, midarrow},
    bdedge/.style={draw=blue, thick, midarrow},
    cdedge/.style={draw=colB, thick, midarrow}
}

\tikzstyle{cutVertex}=[circle, draw=black, fill=black, inner sep=0pt, minimum size=7pt, text=white, font=\tiny\bfseries]
\tikzstyle{arsVertex}=[circle, draw=arsCyan, fill=arsCyan, inner sep=0pt, minimum size=7pt]

\tikzstyle{cutEdge}=[baseEdge, draw=cutRed]
\tikzstyle{arsEdge}=[baseEdge, draw=arsCyan]
\tikzstyle{mixEdge}=[baseEdge, draw=connGreen]
\tikzstyle{decompEdge}=[line width=0.8pt, draw=decompGray, dashed] 
\tikzstyle{parallelEdge}=[line width=0.8pt, draw=arsCyan]  %
\tikzstyle{rarsEdge}=[baseEdge, draw=red]
\tikzstyle{virtualEdge}=[line width=0.8pt, draw=arsCyan, dashed]
\tikzstyle{rvirtualEdge}=[line width=0.8pt, draw=red, dashed]  

\tikzstyle{blockNode}=[rectangle, draw=black, fill=arsCyan, rounded corners=3pt, minimum size=20pt, align=center, font=\small\bfseries]
\tikzstyle{treeCutNode}=[circle, draw=black, fill=black, inner sep=0pt, minimum size=12pt, text=white, font=\tiny\bfseries]
\tikzstyle{treeEdge}=[line width=1.0pt, black!80]
\tikzstyle{blockcutNode}=[rectangle, draw=black, fill=red!80, rounded corners=3pt, minimum size=24pt, align=center, font=\small\bfseries]
\tikzstyle{cutNode}=[circle, draw=black, fill=black, inner sep=0pt, minimum size=14pt, text=white, font=\footnotesize\bfseries]

\tikzstyle{treeEdge}=[line width=1.2pt, black!80]

        \newcommand{\drawKfiveARS}[3]{
            \begin{scope}[shift={(#1)}]
                \coordinate (center) at (#2:2.2);
                \def\rad{0.8}
                
                \node[arsVertex] (k3) at ($(center) + ({#2+180-40}:\rad)$) {};
                \node[arsVertex] (k4) at ($(center) + ({#2+180+40}:\rad)$) {};
                
                \node[arsVertex] (k1) at ($(center) + ({#2}:\rad)$) {};
                \node[arsVertex] (k2) at ($(center) + ({#2+72}:\rad)$) {};
                \node[arsVertex] (k5) at ($(center) + ({#2-72}:\rad)$) {};

                \draw[mixEdge] (#1) -- (k3); \draw[mixEdge] (#1) -- (k4);
                \draw[arsEdge] (k1)--(k2) (k1)--(k3) (k1)--(k4) (k1)--(k5);
                \draw[arsEdge] (k2)--(k3) (k2)--(k4) (k2)--(k5);
                \draw[arsEdge] (k3)--(k5); \draw[arsEdge] (k4)--(k5);
                
                \draw[decompEdge] (k3)--(k4);

                \node at ($(center) + (0, -1.5)$) {#3}; 
            \end{scope}
        }

        \newcommand{\drawKsixARS}[3]{
            \begin{scope}[shift={(#1)}]
                \coordinate (center) at (#2:2.5);
                \def\orad{1.0} \def\irad{0.35}
                \node[arsVertex] (k1) at ($(center) + ({#2+180-35}:\orad)$) {}; 
                \node[arsVertex] (k4) at ($(center) + ({#2+180+35}:\orad)$) {};
                \node[arsVertex] (k2) at ($(center) + ({#2+90}:\orad)$) {};
                \node[arsVertex] (k3) at ($(center) + ({#2-90}:\orad)$) {};
                \node[arsVertex] (k5) at ($(center) + ({#2}:\irad)$) {};
                \node[arsVertex] (k6) at ($(center) + ({#2+180}:\irad)$) {};

                \draw[mixEdge] (#1) -- (k1); \draw[mixEdge] (#1) -- (k4);
                \draw[arsEdge] (k1)--(k2)--(k3)--(k4);
                \draw[arsEdge] (k5)--(k1) (k5)--(k2) (k5)--(k3) (k5)--(k4);
                \draw[arsEdge] (k6)--(k1) (k6)--(k2) (k6)--(k3) (k6)--(k4);

                \draw[decompEdge] (k1)--(k4);

                \node at ($(center) + (1.8, 0)$) {#3};
            \end{scope}
        }

\newcommand{\localstep}[1]{
    \node[vertex] (#1-01) at (-0.8, -0.6) {};
    \node[vertex] (#1-02) at (-0.8, 0.6) {};
    \node[vertex] (#1-03) at (0, 0.9) {};
    \node[vertex] (#1-11) at (1, -0.9) {};
    \node[vertex] (#1-12) at (1, 0.9) {};
    \node[vertex] (#1-2)  at (1.7, 0) {};
}

\newcommand{\singlel}{
    \begin{tikzpicture}[scale=0.9, every node/.style={transform shape}]
    
    \begin{scope}[xshift=0cm, yshift=0cm]
        \localstep{s1}
        \node[cvertex] (v1) at (0,0) {}; \node[below right] at (v1) {$v_1$};
        \node[cvertex] (v2) at (1,0) {}; \node[below right] at (v2) {$v_2$};
        
        \draw[edge] (v1)--(s1-01) (v1)--(s1-02) (v1)--(v2);
        \draw[redge] (v1)--(s1-03); 
        \draw [-to,shorten >=-1pt,red, very thick] (0, 0.45) to (0, 0.5); 
        
        \draw[edge] (v2)--(s1-11) (v2)--(s1-12) (v2)--(s1-2);
        
        \draw[colB,very thick,->] (2, 0.2) -- (3, 1.8);
        \draw[colB,very thick,->] (2, 0)   -- (3, 0);
        \draw[colB,very thick,->] (2, -0.2)-- (3, -1.8);
    \end{scope}

    \begin{scope}[xshift=4.5cm, yshift=2.2cm]
        \localstep{s2a}
        \node[rvertex] (v1) at (0,0) {}; \node[red,below right] at (v1) {$v_1$};
        \node[cvertex] (v2) at (1,0) {};\node[below right] at (v2) {$v_2$};
        \draw[redge] (v1)--(s2a-01); \draw[-to,red,very thick] (-0.35, -0.25) to (-0.4, -0.3);
        \draw[redge] (v1)--(s2a-02); \draw[-to,red,very thick] (-0.35, 0.25) to (-0.3, 0.2);
        \draw[redge] (v1)--(s2a-03); \draw[-to,red,very thick] (0, 0.45) to (0, 0.5);
        \draw[redge] (v2)--(v1);     \draw[-to,red,very thick] (0.5, 0) to (0.45, 0);
        
        \draw[edge] (v2)--(s2a-11) (v2)--(s2a-12) (v2)--(s2a-2);
    \end{scope}

    \begin{scope}[xshift=4.5cm, yshift=0cm]
        \localstep{s2b}
        \node[rvertex] (v1) at (0,0) {}; \node[red, below right] at (v1) {$v_1$};
        \node[cvertex] (v2) at (1,0) {};\node[below right] at (v2) {$v_2$};
        \draw[redge] (s2b-01)--(v1); \draw[-to,red,very thick] (-0.35, -0.25) to (-0.3, -0.2);
        \draw[redge] (v1)--(s2b-02); \draw[-to,red,very thick] (-0.35, 0.25) to (-0.4, 0.3);
        \draw[redge] (v1)--(s2b-03); \draw[-to,red,very thick] (0, 0.45) to (0, 0.5);
        \draw[redge] (v2)--(v1);     \draw[-to,red,very thick] (0.5, 0) to (0.45, 0);
        
        \draw[edge] (v2)--(s2b-11) (v2)--(s2b-12) (v2)--(s2b-2);
    \end{scope}

    \begin{scope}[xshift=4.5cm, yshift=-2.2cm]
        \localstep{s2c}
        \node[rvertex] (v1) at (0,0) {}; \node[red, below right] at (v1) {$v_1$};
        \node[cvertex] (v2) at (1,0) {};\node[below right] at (v2) {$v_2$};
        \draw[redge] (s2c-01)--(v1); \draw[-to,red,very thick] (-0.35, -0.25) to (-0.3, -0.2);
        \draw[redge] (s2c-02)--(v1); \draw[-to,red,very thick] (-0.35, 0.25) to (-0.3, 0.2);
        \draw[redge] (v1)--(s2c-03); \draw[-to,red,very thick] (0, 0.45) to (0, 0.5);
        \draw[redge] (v1)--(v2);     \draw[-to,red,very thick] (0.5, 0) to (0.55, 0);
        
        \draw[edge] (v2)--(s2c-11) (v2)--(s2c-12) (v2)--(s2c-2);
    \end{scope}

    \end{tikzpicture}
}

\newcommand{\Gpath}{
    \begin{tikzpicture}[scale=.65]
        \begin{scope}[xshift=-1.5cm, yshift=0]
        
        \node[vertex] (01) at (-1.12, -0.8) {};
        \node[vertex] (02) at (-1.12, 0.8) {};
        \node[vertex] (03) at (0, 1) {};
        \node[cvertex] (0) at (0, 0) {}; \node at (-0.7, 0) {$u_0$};
        
        \node[vertex] (11) at (1, -1) {};
        \node[vertex] (12) at (1, 1) {};
        \node[cvertex] (1) at (1, 0) {}; \node at (1.4, -0.4) {$u_1$};

        \node[vertex] (21) at (3.12, -0.8) {};
        \node[vertex] (22) at (3.12, 0.8) {};
        \node[vertex] (23) at (2, 1) {};
        \node[cvertex] (2) at (2, 0) {}; \node at (2.8, 0) {$u_2$};

        \draw[edge] (0) to (01);    
        \draw[edge] (0) to (02);
        \draw[redge] (0) to (03);
        \draw[edge] (0) to (1);

        \draw [-to,shorten >=-1pt,red, very thick] (0, 0.58) to (0, 0.6);

        \draw[edge] (1) to (2);
        \draw[edge] (1) to (11);
        \draw[redge] (1) to (12);

        \draw [-to,shorten >=-1pt,red, very thick] (1, 0.58) to (1, 0.6);
        
        \draw[edge] (2) to (21);
        \draw[edge] (2) to (22);
        \draw[redge] (2) to (23);
        
        \draw [-to,shorten >=-1pt,red, very thick] (2, 0.58) to (2, 0.6);
       
     \end{scope}
        \draw[colB,very thick,->] (2.6, 0) -- (4.8, 4.5);
        \draw[colB,very thick,->] (2.6, 0) -- (4.8, 0);
        \draw[colB,very thick,->] (2.6, 0) -- (4.8, -6.5);

        \begin{scope}[xshift=6.5cm, yshift=4.5cm]
            
            \node[vertex] (01) at (-.8, -0.8) {};
            \node[vertex] (02) at (-.8, 0.8) {};
            \node[vertex] (03) at (0, 1) {};
            \node[rvertex] (0) at (0, 0) {}; \node at (-0.7, 0) {\textcolor{red}{$u_0$}};
        
            \node[vertex] (11) at (1, -1) {};
            \node[vertex] (12) at (1, 1) {};
            \node[cvertex] (1) at (1, 0) {}; \node at (1.4, -0.4) {$u_1$};

            \node[vertex] (21) at (3.12, -0.8) {};
            \node[vertex] (22) at (3.12, 0.8) {};
            \node[vertex] (23) at (2, 1) {};
            \node[cvertex] (2) at (2, 0) {}; \node at (2.8, 0) {$u_2$};

            \draw[redge] (0) to (01);    
            \draw [-to,shorten >=-1pt,red, very thick] (-.46, -.46) to (-.48, -.48);
            
            \draw[redge] (0) to (02);
            \draw [-to,shorten >=-1pt,red, very thick] (-.48, .48) to (-.46, .46);
            
            \draw[redge] (0) to (03);
            \draw [-to,shorten >=-1pt,red, very thick] (0, 0.58) to (0, 0.6);
            
            \draw[redge] (0) to (1);
            \draw [-to,shorten >=-1pt,red, very thick] (.55, 0) to (.53, 0);

            \draw[edge] (1) to (2);
            \draw[edge] (1) to (11);
            \draw[redge] (1) to (12);

            \draw [-to,shorten >=-1pt,red, very thick] (1, 0.58) to (1, 0.6);
        
            \draw[edge] (2) to (21);
            \draw[edge] (2) to (22);
            \draw[redge] (2) to (23);
        
            \draw [-to,shorten >=-1pt,red, very thick] (2, 0.58) to (2, 0.6);
        
            \draw[edge] (2) to (21);
            \draw[edge] (2) to (22);
            \draw[redge] (2) to (23);
        
            \draw [-to,shorten >=-1pt,red, very thick] (2, 0.58) to (2, 0.6);

            \draw[colB,very thick,->] (3.5, 0) -- (5, 0);
            
                \begin{scope}[xshift=6.5cm, yshift=0cm]

                    \node[vertex] (01) at (-.8, -0.8) {};
                    \node[vertex] (02) at (-.8, 0.8) {};
                    \node[vertex] (03) at (0, 1) {};
                    \node[rvertex] (0) at (0, 0) {}; \node at (-0.7, 0) {\textcolor{red}{$u_0$}};
            
                    \node[vertex] (11) at (1, -1) {};
                    \node[vertex] (12) at (1, 1) {};
                    \node[rvertex] (1) at (1, 0) {}; \node at (1.4, -0.4) {\textcolor{red}{$u_1$}};

                    \node[vertex] (21) at (3.12, -0.8) {};
                    \node[vertex] (22) at (3.12, 0.8) {};
                    \node[vertex] (23) at (2, 1) {};
                    \node[cvertex] (2) at (2, 0) {}; \node at (2.8, 0) {$u_2$};
    
                    \draw[redge] (0) to (01);    
                    \draw [-to,shorten >=-1pt,red, very thick] (-.46, -.46) to (-.48, -.48);
                
                    \draw[redge] (0) to (02);
                    \draw [-to,shorten >=-1pt,red, very thick] (-.48, .48) to (-.46, .46);
                
                    \draw[redge] (0) to (03);
                    \draw [-to,shorten >=-1pt,red, very thick] (0, 0.58) to (0, 0.6);
                
                    \draw[redge] (0) to (1);
                    \draw [-to,shorten >=-1pt,red, very thick] (.58, 0) to (.56, 0);

                    \draw[redge] (1) to (2);
                    \draw [-to,shorten >=-1pt,red, very thick] (1.55, 0) to (1.53, 0);

                    \draw[redge] (1) to (11);
                    \draw [-to,shorten >=-1pt,red, very thick] (1, -0.55) to (1, -0.53);
    
                    \draw[redge] (1) to (12);
                    \draw [-to,shorten >=-1pt,red, very thick] (1, 0.58) to (1, 0.6);

                    \draw[edge] (2) to (21);
                    \draw[edge] (2) to (22);
                    \draw[redge] (2) to (23);
            
                    \draw [-to,shorten >=-1pt,red, very thick] (2, 0.58) to (2, 0.6);
            
                    \draw[edge] (2) to (21);
                    \draw[edge] (2) to (22);
                    \draw[redge] (2) to (23);
                    \draw [-to,shorten >=-1pt,red, very thick] (2, 0.58) to (2, 0.6);

            \end{scope}

        \end{scope}

        \begin{scope}[xshift=6.5cm, yshift=0cm]
            
            \node[vertex] (01) at (-.8, -0.8) {};
            \node[vertex] (02) at (-.8, 0.8) {};
            \node[vertex] (03) at (0, 1) {};
            \node[rvertex] (0) at (0, 0) {}; \node at (-0.7, 0) {\textcolor{red}{$u_0$}};
        
            \node[vertex] (11) at (1, -1) {};
            \node[vertex] (12) at (1, 1) {};
            \node[cvertex] (1) at (1, 0) {}; \node at (1.4, -0.4) {$u_1$};

            \node[vertex] (21) at (3.12, -0.8) {};
            \node[vertex] (22) at (3.12, 0.8) {};
            \node[vertex] (23) at (2, 1) {};
            \node[cvertex] (2) at (2, 0) {}; \node at (2.8, 0) {$u_2$};

            \draw[redge] (0) to (01);    
            \draw [-to,shorten >=-1pt,red, very thick] (-.48, -.48) to (-.46, -.46);
            
            \draw[redge] (0) to (02);
            \draw [-to,shorten >=-1pt,red, very thick] (-.46, .46) to (-.48, .48);
            
            \draw[redge] (0) to (03);
            \draw [-to,shorten >=-1pt,red, very thick] (0, 0.58) to (0, 0.6);
            
            \draw[redge] (0) to (1);
            \draw [-to,shorten >=-1pt,red, very thick] (.55, 0) to (.53, 0);

            \draw[edge] (1) to (2);
            \draw[edge] (1) to (11);
            \draw[redge] (1) to (12);

            \draw [-to,shorten >=-1pt,red, very thick] (1, 0.58) to (1, 0.6);
        
            \draw[edge] (2) to (21);
            \draw[edge] (2) to (22);
            \draw[redge] (2) to (23);
        
            \draw [-to,shorten >=-1pt,red, very thick] (2, 0.58) to (2, 0.6);
        
            \draw[edge] (2) to (21);
            \draw[edge] (2) to (22);
            \draw[redge] (2) to (23);
        
            \draw [-to,shorten >=-1pt,red, very thick] (2, 0.58) to (2, 0.6);

            \draw[colB,very thick,->] (3.5, 0) -- (5, 0);

            \begin{scope}[xshift=6.5cm, yshift=0cm]
                
                \node[vertex] (01) at (-.8, -0.8) {};
                \node[vertex] (02) at (-.8, 0.8) {};
                \node[vertex] (03) at (0, 1) {};
                \node[rvertex] (0) at (0, 0) {}; \node at (-0.7, 0) {\textcolor{red}{$u_0$}};
        
                \node[vertex] (11) at (1, -1) {};
                \node[vertex] (12) at (1, 1) {};
                \node[rvertex] (1) at (1, 0) {}; \node at (1.4, -0.4) {\textcolor{red}{$u_1$}};

                \node[vertex] (21) at (3.12, -0.8) {};
                \node[vertex] (22) at (3.12, 0.8) {};
                \node[vertex] (23) at (2, 1) {};
                \node[cvertex] (2) at (2, 0) {}; \node at (2.8, 0) {$u_2$};

                \draw[redge] (0) to (01);    
                \draw [-to,shorten >=-1pt,red, very thick] (-.48, -.48) to (-.46, -.46);
            
                \draw[redge] (0) to (02);
                \draw [-to,shorten >=-1pt,red, very thick] (-.46, .46) to (-.48, .48);
            
                \draw[redge] (0) to (03);
                \draw [-to,shorten >=-1pt,red, very thick] (0, 0.58) to (0, 0.6);
            
                \draw[redge] (0) to (1);
                \draw [-to,shorten >=-1pt,red, very thick] (.58, 0) to (.56, 0);

                \draw[redge] (1) to (2);
                \draw [-to,shorten >=-1pt,red, very thick] (1.55, 0) to (1.53, 0);

                \draw[redge] (1) to (11);
                \draw [-to,shorten >=-1pt,red, very thick] (1, -0.55) to (1, -0.53);

                \draw[redge] (1) to (12);
                \draw [-to,shorten >=-1pt,red, very thick] (1, 0.58) to (1, 0.6);

                \draw[edge] (2) to (21);
                \draw[edge] (2) to (22);
                \draw[redge] (2) to (23);
        
                \draw [-to,shorten >=-1pt,red, very thick] (2, 0.58) to (2, 0.6);
        
                \draw[edge] (2) to (21);
                \draw[edge] (2) to (22);
                \draw[redge] (2) to (23);

                \draw [-to,shorten >=-1pt,red, very thick] (2, 0.58) to (2, 0.6);

            \end{scope}

        \end{scope}

        \begin{scope}[xshift=6.5cm, yshift=-6.5cm]
            
            \node[vertex] (01) at (-.8, -0.8) {};
            \node[vertex] (02) at (-.8, 0.8) {};
            \node[vertex] (03) at (0, 1) {};
            \node[rvertex] (0) at (0, 0) {}; \node at (-0.7, 0) {\textcolor{red}{$u_0$}};
        
            \node[vertex] (11) at (1, -1) {};
            \node[vertex] (12) at (1, 1) {};
            \node[cvertex] (1) at (1, 0) {}; \node at (1.4, -0.4) {$u_1$};

            \node[vertex] (21) at (3.12, -0.8) {};
            \node[vertex] (22) at (3.12, 0.8) {};
            \node[vertex] (23) at (2, 1) {};
            \node[cvertex] (2) at (2, 0) {}; \node at (2.8, 0) {$u_2$};

            \draw[redge] (0) to (01);    
            \draw [-to,shorten >=-1pt,red, very thick] (-.48, -.48) to (-.46, -.46);
            
            \draw[redge] (0) to (02);
            \draw [-to,shorten >=-1pt,red, very thick] (-.48, .48) to (-.46, .46);
            
            \draw[redge] (0) to (03);
            \draw [-to,shorten >=-1pt,red, very thick] (0, 0.58) to (0, 0.6);
            
            \draw[redge] (0) to (1);
            \draw [-to,shorten >=-1pt,red, very thick] (.58, 0) to (.6, 0);

            \draw[edge] (1) to (2);
            \draw[edge] (1) to (11);
            \draw[redge] (1) to (12);

            \draw [-to,shorten >=-1pt,red, very thick] (1, 0.58) to (1, 0.6);
        
            \draw[edge] (2) to (21);
            \draw[edge] (2) to (22);
            \draw[redge] (2) to (23);
        
            \draw [-to,shorten >=-1pt,red, very thick] (2, 0.58) to (2, 0.6);
        
            \draw[edge] (2) to (21);
            \draw[edge] (2) to (22);
            \draw[redge] (2) to (23);
        
            \draw [-to,shorten >=-1pt,red, very thick] (2, 0.58) to (2, 0.6);

            \draw[colB,very thick,->] (4, 0) -- (5, 3);
            \draw[colB,very thick,->] (4, 0) -- (5, -3);

            \begin{scope}[xshift=6.5cm, yshift=3cm]
            
                \node[vertex] (01) at (-.8, -0.8) {};
                \node[vertex] (02) at (-.8, 0.8) {};
                \node[vertex] (03) at (0, 1) {};
                \node[rvertex] (0) at (0, 0) {}; \node at (-0.7, 0) {\textcolor{red}{$u_0$}};
        
                \node[vertex] (11) at (1, -1) {};
                \node[vertex] (12) at (1, 1) {};
                \node[rvertex] (1) at (1, 0) {}; \node at (1.4, -0.4) {\textcolor{red}{\textcolor{red}{$u_1$}}};

                \node[vertex] (21) at (3.12, -0.8) {};
                \node[vertex] (22) at (3.12, 0.8) {};
                \node[vertex] (23) at (2, 1) {};
                \node[cvertex] (2) at (2, 0) {}; \node at (2.8, 0) {$u_2$};

                \draw[redge] (0) to (01);    
                \draw [-to,shorten >=-1pt,red, very thick] (-.48, -.48) to (-.46, -.46);
            
                \draw[redge] (0) to (02);
                \draw [-to,shorten >=-1pt,red, very thick] (-.48, .48) to (-.46, .46);
            
                \draw[redge] (0) to (03);
                \draw [-to,shorten >=-1pt,red, very thick] (0, 0.58) to (0, 0.6);
            
                \draw[redge] (0) to (1);
                \draw [-to,shorten >=-1pt,red, very thick] (.58, 0) to (.6, 0);

                \draw[redge] (1) to (2);
                \draw [-to,shorten >=-1pt,red, very thick] (1.55, 0) to (1.53, 0);

                \draw[redge] (1) to (11);
                \draw [-to,shorten >=-1pt,red, very thick] (1, -0.58) to (1, -0.6);
                
                \draw[redge] (1) to (12);
                \draw [-to,shorten >=-1pt,red, very thick] (1, 0.58) to (1, 0.6);
        
                \draw[edge] (2) to (21);
                \draw[edge] (2) to (22);
                \draw[redge] (2) to (23);
        
                \draw [-to,shorten >=-1pt,red, very thick] (2, 0.58) to (2, 0.6);
        
                \draw[edge] (2) to (21);
                \draw[edge] (2) to (22);
                \draw[redge] (2) to (23);
        
                \draw [-to,shorten >=-1pt,red, very thick] (2, 0.58) to (2, 0.6);

            \end{scope}

            \begin{scope}[xshift=6.5cm, yshift=-3cm]

                \node[vertex] (01) at (-.8, -0.8) {};
                \node[vertex] (02) at (-.8, 0.8) {};
                \node[vertex] (03) at (0, 1) {};
                \node[rvertex] (0) at (0, 0) {}; \node at (-0.7, 0) {\textcolor{red}{$u_0$}};
        
                \node[vertex] (11) at (1, -1) {};
                \node[vertex] (12) at (1, 1) {};
                \node[rvertex] (1) at (1, 0) {}; \node at (1.4, -0.4) {\textcolor{red}{$u_1$}};

                \node[vertex] (21) at (3.12, -0.8) {};
                \node[vertex] (22) at (3.12, 0.8) {};
                \node[vertex] (23) at (2, 1) {};
                \node[cvertex] (2) at (2, 0) {}; \node at (2.8, 0) {$u_2$};

                \draw[redge] (0) to (01);    
                \draw [-to,shorten >=-1pt,red, very thick] (-.48, -.48) to (-.46, -.46);
            
                \draw[redge] (0) to (02);
                \draw [-to,shorten >=-1pt,red, very thick] (-.48, .48) to (-.46, .46);
            
                \draw[redge] (0) to (03);
                \draw [-to,shorten >=-1pt,red, very thick] (0, 0.58) to (0, 0.6);
            
                \draw[redge] (0) to (1);
                \draw [-to,shorten >=-1pt,red, very thick] (.58, 0) to (.6, 0);

                \draw[redge] (1) to (2);
                \draw [-to,shorten >=-1pt,red, very thick] (1.58, 0) to (1.6, 0);

                \draw[redge] (1) to (11);
                \draw [-to,shorten >=-1pt,red, very thick] (1, -0.55) to (1, -0.53);

                \draw[redge] (1) to (12);
                \draw [-to,shorten >=-1pt,red, very thick] (1, 0.58) to (1, 0.6);

                \draw[edge] (2) to (21);
                \draw[edge] (2) to (22);
                \draw[redge] (2) to (23);
        
                \draw [-to,shorten >=-1pt,red, very thick] (2, 0.58) to (2, 0.6);
        
                \draw[edge] (2) to (21);
                \draw[edge] (2) to (22);
                \draw[redge] (2) to (23);
        
                \draw [-to,shorten >=-1pt,red, very thick] (2, 0.58) to (2, 0.6);               
            \end{scope}
        \end{scope}

    \end{tikzpicture}
}

\newcommand{\GDoubleEdge}{
    \begin{tikzpicture}[scale=.65]
        
        \begin{scope}[xshift=0cm, yshift=0cm]
            \node[cvertex] (u0) at (0, 0) {}; \node at (-0.7, 0) {$u_0$};
            \node[cvertex] (u1) at (1.5, 0) {}; \node at (1.6, -0.7) {$u_1$};
            \node[cvertex] (u2) at (3, 0) {}; \node at (3.1, -0.7) {$u_2$};

            \node[vertex] (n0a) at (-.8, 1) {};
            \node[vertex] (n0b) at (-.8, -1) {};
            
            \node[vertex] (n2a) at (3.8, 1) {};
            \node[vertex] (n2b) at (3.8, -1) {};

            \draw[redge] (n0a) to (u0);
            \draw [-to,shorten >=-1pt,red, very thick] (-0.4, 0.5) to (-0.35, 0.42);
            
            \draw[edge] (n0b) to (u0);

            \draw[edge] (u0) to[bend left=20] (u1);
            \draw[edge] (u0) to[bend right=20] (u1);

            \draw[edge] (u1) to[bend left=20] (u2);
            \draw[edge] (u1) to[bend right=20] (u2);

            \draw[edge] (u2) to (n2a);
            \draw[edge] (u2) to (n2b);

        \end{scope}

        \draw[colB,very thick,->] (4.2, 0.5) -- (5.5, 3.5);
        \draw[colB,very thick,->] (4.2, -0.5) -- (5.5, -3.5);
        \node at (4.5, 2.5) {\textbf{(2)}};
        \node at (4.5, -2.5) {\textbf{(1)}};
        \begin{scope}[xshift=6.6cm, yshift=3.5cm]

            \node[rvertex] (u0) at (0, 0) {}; \node at (-0.7, 0) {\textcolor{red}{$u_0$}};
            \node[cvertex] (u1) at (1.5, 0) {}; \node at (1.6, -0.7) {$u_1$};
            \node[cvertex] (u2) at (3, 0) {}; \node at (3.1, -0.7) {$u_2$};

            \node[vertex] (n0a) at (-.8, 1) {};
            \node[vertex] (n0b) at (-.8, -1) {};
            
            \node[vertex] (n2a) at (3.8, 1) {};
            \node[vertex] (n2b) at (3.8, -1) {};

            \draw[redge] (n0a) to (u0);
            \draw [-to,shorten >=-1pt,red, very thick] (-0.4, 0.5) to (-0.35, 0.42);

            \draw[redge] (u0) to (n0b);
            \draw [-to,shorten >=-1pt,red, very thick] (-0.35, -0.42) to (-0.4, -0.5);

            \draw[redge] (u0) to[bend left=20] (u1);
            \draw [-to,shorten >=-1pt,red, very thick] (0.65, 0.2) to (.8, 0.2); 

            \draw[redge] (u0) to[bend right=20] (u1);
            \draw [-to,shorten >=-1pt,red, very thick] (.8, -0.2) to (0.65, -0.2); 

            \draw[edge] (u1) to[bend left=20] (u2);
            \draw[edge] (u1) to[bend right=20] (u2);
            \draw[edge] (u2) to (n2a);
            \draw[edge] (u2) to (n2b);

            \draw[colB,very thick,->] (3.7, 0) -- (5, 0);
            \node at (4.4, .5) {\textbf{(2)}};

            \begin{scope}[xshift=6.3cm]
                \node[rvertex] (u0) at (0, 0) {}; \node at (-0.7, 0) {\textcolor{red}{$u_0$}};
                \node[rvertex] (u1) at (1.5, 0) {}; \node at (1.6, -0.7) {\textcolor{red}{$u_1$}};
                \node[cvertex] (u2) at (3, 0) {}; \node at (4.1, -0.7) {$u_2$};

                \node[vertex] (n0a) at (-.8, 1) {};
                \node[vertex] (n0b) at (-.8, -1) {};
                
                \node[vertex] (n2a) at (3.8, 1) {};
                \node[vertex] (n2b) at (3.8, -1) {};
                
                \draw[redge] (n0a) to (u0);
                \draw [-to,shorten >=-1pt,red, very thick] (-0.4, 0.5) to (-0.35, 0.42);

                \draw[redge] (u0) to (n0b);
                \draw [-to,shorten >=-1pt,red, very thick] (-0.35, -0.42) to (-0.4, -0.5);
    
                \draw[redge] (u0) to[bend left=20] (u1);
                \draw [-to,shorten >=-1pt,red, very thick] (0.65, 0.2) to (.8, 0.2); 

                \draw[redge] (u0) to[bend right=20] (u1);
                \draw [-to,shorten >=-1pt,red, very thick] (.8, -0.2) to (0.65, -0.2); 

                \draw[redge] (u1) to[bend left=20] (u2);
                \draw [-to,shorten >=-1pt,red, very thick] (2.15, 0.2) to (2.3, 0.2); 

                \draw[redge] (u1) to[bend right=20] (u2);
                \draw [-to,shorten >=-1pt,red, very thick] (2.3, -0.2) to (2.15, -0.2); 

                \draw[edge] (u2) to (n2a);
                \draw[edge] (u2) to (n2b);

            \end{scope}

        \end{scope}

        \begin{scope}[xshift=6.6cm, yshift=-3.5cm]
            \node[rvertex] (u0) at (0, 0) {}; \node at (-0.7, 0) {\textcolor{red}{$u_0$}};
            \node[cvertex] (u1) at (1.5, 0) {}; \node at (1.6, -0.7) {$u_1$};
            \node[cvertex] (u2) at (3, 0) {}; \node at (3.1, -0.7) {$u_2$};

            \node[vertex] (n0a) at (-.8, 1) {};
            \node[vertex] (n0b) at (-.8, -1) {};
            
            \node[vertex] (n2a) at (3.8, 1) {};
            \node[vertex] (n2b) at (3.8, -1) {};

            \draw[redge] (n0a) to (u0);
            \draw [-to,shorten >=-1pt,red, very thick] (-0.4, 0.5) to (-0.35, 0.42);

            \draw[redge] (u0) to (n0b);
            \draw [-to,shorten >=-1pt,red, very thick] (-0.4, -0.5) to (-0.35, -0.42);

            \draw[redge] (u0) to[bend left=20] (u1);
            \draw [-to,shorten >=-1pt,red, very thick] (0.65, 0.2) to (.8, 0.2); 

            \draw[redge] (u0) to[bend right=20] (u1);
            \draw [-to,shorten >=-1pt,red, very thick] (.65, -0.2) to (0.8, -0.2); 

            \draw[edge] (u1) to[bend left=20] (u2);
            \draw[edge] (u1) to[bend right=20] (u2);
            \draw[edge] (u2) to (n2a);
            \draw[edge] (u2) to (n2b);

            \draw[colB,very thick,->] (3.7, 0) -- (5, 0);
            \node at (4.4, .5) {\textbf{(1)}};

            \begin{scope}[xshift=6.3cm]
                \node[rvertex] (u0) at (0, 0) {}; \node at (-0.7, 0) {\textcolor{red}{$u_0$}};
                \node[rvertex] (u1) at (1.5, 0) {}; \node at (1.6, -0.7) {\textcolor{red}{$u_1$}};
                \node[cvertex] (u2) at (3, 0) {}; \node at (3.1, -0.7) {$u_2$};
    
                \node[vertex] (n0a) at (-.8, 1) {};
                \node[vertex] (n0b) at (-.8, -1) {};
                
                \node[vertex] (n2a) at (3.8, 1) {};
                \node[vertex] (n2b) at (3.8, -1) {};
    
                \draw[redge] (n0a) to (u0);
                \draw [-to,shorten >=-1pt,red, very thick] (-0.4, 0.5) to (-0.35, 0.42);

                \draw[redge] (u0) to (n0b);
                \draw [-to,shorten >=-1pt,red, very thick] (-0.4, -0.5) to (-0.35, -0.42);
    
                \draw[redge] (u0) to[bend left=20] (u1);
                \draw [-to,shorten >=-1pt,red, very thick] (0.65, 0.2) to (.8, 0.2); 

                \draw[redge] (u0) to[bend right=20] (u1);
                \draw [-to,shorten >=-1pt,red, very thick] (0.65, -0.2) to (.8, -0.2); 
    
                \draw[redge] (u1) to[bend left=20] (u2);
                \draw [-to,shorten >=-1pt,red, very thick] (2.15, 0.2) to (2.3, 0.2); 

                \draw[redge] (u1) to[bend right=20] (u2);
                \draw [-to,shorten >=-1pt,red, very thick] (2.15, -0.2) to (2.3, -0.2); 
                
                \draw[edge] (u2) to (n2a);
                \draw[edge] (u2) to (n2b);

            \end{scope}

        \end{scope}

    \end{tikzpicture}
}

\newcommand{\separablexanple}{
\begin{tikzpicture}[scale=0.52]

    \begin{scope}[xshift=-5cm]
        \node at (-2, -7) { \textbf{(a) The separable graph $G_{30}$}};

        \coordinate (u1c) at (90:1.5);
        \coordinate (u2c) at (-30:1.5);
        \coordinate (u3c) at (210:1.5);
        
        \node[arsVertex] (u1) at (u1c) {};
        \node[arsVertex] (u2) at (u2c) {};
        \node[arsVertex] (u3) at (u3c) {};
        \node at (0,0) {\large $\mathcal{D}_3$};

        \node[cutVertex] (v3) at (150:3.2) {};  \node[above=0.15cm]      at (v3) {$v_3$};
        \node[cutVertex] (v4) at (30:3.2)  {}; \node[below=0.15cm]      at (v4) {$v_4$};
        \node[cutVertex] (v5) at (270:3.2) {}; \node[right=0.15cm]      at (v5) {$v_5$};
        \node[cutVertex] (v1) at ($(v3) + (170:2.5)$) {}; \node[above=0.15cm]      at (v1) {$v_1$};
        \node[cutVertex] (v2) at ($(v3) + (230:2.5)$) {};\node[right=0.15cm]      at (v2) {$v_2$};

        \draw[arsEdge] (u1)--(u2)--(u3)--(u1);
        \draw[virtualEdge] (u1) to[bend left=20] (u2);
        \draw[virtualEdge] (u2) to[bend left=20] (u3);
        \draw[virtualEdge] (u3) to[bend left=20] (u1);

        \draw[cutEdge] (v1)--(v2)--(v3)--(v1);
        \draw[mixEdge] (v3)--(u1); \draw[mixEdge] (v4)--(u1);
        \draw[mixEdge] (v4)--(u2); \draw[mixEdge] (v5)--(u2);
        \draw[mixEdge] (v5)--(u3); \draw[mixEdge] (v3)--(u3);

        \drawKfiveARS{v1}{170}{$K_5^1$}
        \drawKfiveARS{v4}{50}{$K_5^2$} 
        \drawKsixARS{v2}{230}{$G_6^1$}
        \drawKsixARS{v5}{270}{$G_6^2$}
    \end{scope}

    \begin{scope}[xshift=6cm]
        \node at (1, -7) { \textbf{ (b) The Block-Cut Tree of $G_{30}$}};

        \node[blockNode] (D3) at (0, 0) {$\mathcal{D}_3$};

        \node[treeCutNode] (v3) at (130:2.5) {$v_3$};
        \draw[treeEdge] (D3) -- (v3);
        
        \node[blockcutNode] (C3) at ($(v3) + (130:2.0)$) {$C_3$};
        \draw[treeEdge] (v3) -- (C3);

        \node[treeCutNode] (v1) at ($(C3) -(90:2.0)$) {$v_1$};
        \draw[treeEdge] (C3) -- (v1);
        \node[blockNode] (K51) at ($(v1) - (90:1.5)$) {$K_5^1$};
        \draw[treeEdge] (v1) -- (K51);

        \node[treeCutNode] (v2) at ($(C3) - (180:2.0)$) {$v_2$};
        \draw[treeEdge] (C3) -- (v2);
        \node[blockNode] (G61) at ($(v2) -(180:1.5)$) {$G_6^1$};
        \draw[treeEdge] (v2) -- (G61);

        \node[treeCutNode] (v4) at (30:2.5) {$v_4$};
        \draw[treeEdge] (D3) -- (v4);
        \node[blockNode] (K25) at ($(v4) + (30:2.0)$) {$K_2^5$};
        \draw[treeEdge] (v4) -- (K25);

        \node[treeCutNode] (v5) at (270:2.5) {$v_5$};
        \draw[treeEdge] (D3) -- (v5);
        \node[blockNode] (G62) at ($(v5) + (270:2.0)$) {$G_6^2$};
        \draw[treeEdge] (v5) -- (G62);
    \end{scope}
    
    \draw[black!20, line width=2pt] (0.5, 5) -- (0.5, -6);

\end{tikzpicture}
}

\newcommand{\ARSDthree}{
\begin{tikzpicture}[scale=0.75]

    \coordinate (v1c) at (-1.2, 0);
    \coordinate (v2c) at (1.2, 0);
    \coordinate (v3c) at (0, 2); 

    \draw[rvirtualEdge] (v1c) to[bend left=15] (v3c);
    \draw[rvirtualEdge] (v1c) to[bend right=15] (v3c);
    \draw[rvirtualEdge] (v2c) to[bend left=15] (v3c);
    \draw[rvirtualEdge] (v2c) to[bend right=15] (v3c);

    \draw[rarsEdge] (v1c) to[bend left=15] (v2c);
    \draw[rarsEdge] (v1c) to[bend right=15] (v2c);

    \node[rvertex] (v1) at (v1c) {}; \node[below=0.15cm] at (v1) {$u_1$};
    \node[rvertex] (v2) at (v2c) {}; \node[below=0.15cm] at (v2) {$u_2$};
    \node[rvertex] (v3) at (v3c) {}; \node[above=0.15cm] at (v3) {$u_3$};
    
    \node[cutVertex] (c1) at (-3, 2.5) {};  \node[below left=0.05cm] at (c1) {$v_1$};
    \node[cutVertex] (c2) at (-3, -0.5) {}; \node[above left=0.05cm] at (c2) {$v_2$};
    \node[cutVertex] (c3) at (3, 2.5) {};   \node[below right=0.05cm] at (c3) {$v_3$};
    \node[cutVertex] (c4) at (3, -0.5) {};  \node[above right=0.05cm] at (c4) {$v_4$};

    \draw[mixEdge] (v1) -- (c1); \draw[mixEdge] (v1) -- (c2);
    \draw[mixEdge] (v2) -- (c3); \draw[mixEdge] (v2) -- (c4);
    \draw[mixEdge] (v3) -- (c1); \draw[mixEdge] (v3) -- (c2);
    \draw[mixEdge] (v3) -- (c3); \draw[mixEdge] (v3) -- (c4);

    \coordinate (x1) at ($(c1) + (160:1.5)$);
    \coordinate (y1) at ($(c1) + (90:1.5)$);
    \coordinate (z1) at ($(c1) + (125:2.8)$);
    
    \draw[mixEdge] (c1) -- (x1); 
    \draw[mixEdge] (c1) -- (y1);
    \draw[arsEdge] (x1) -- (y1);
    \draw[virtualEdge] (x1) to[bend left=25] (y1); 
    \draw[arsEdge] (x1) to[bend left=15] (z1); \draw[arsEdge] (x1) to[bend right=15] (z1);
    \draw[arsEdge] (y1) to[bend left=15] (z1); \draw[arsEdge] (y1) to[bend right=15] (z1);
    
    \node[arsVertex] at (x1) {}; \node[arsVertex] at (y1) {}; \node[arsVertex] at (z1) {};

    \coordinate (x2) at ($(c2) + (270:1.5)$);
    \coordinate (y2) at ($(c2) + (200:1.5)$);
    \coordinate (z2) at ($(c2) + (235:2.8)$);
    
    \draw[mixEdge] (c2) -- (x2); 
    \draw[mixEdge] (c2) -- (y2);
    \draw[arsEdge] (x2) -- (y2);
    \draw[virtualEdge] (x2) to[bend left=25] (y2); 
    \draw[arsEdge] (x2) to[bend left=15] (z2); \draw[arsEdge] (x2) to[bend right=15] (z2);
    \draw[arsEdge] (y2) to[bend left=15] (z2); \draw[arsEdge] (y2) to[bend right=15] (z2);
    
    \node[arsVertex] at (x2) {}; \node[arsVertex] at (y2) {}; \node[arsVertex] at (z2) {};

    \coordinate (x3) at ($(c3) + (20:1.5)$);
    \coordinate (y3) at ($(c3) + (90:1.5)$);
    \coordinate (z3) at ($(c3) + (55:2.8)$);
    
    \draw[mixEdge] (c3) -- (x3); 
    \draw[mixEdge] (c3) -- (y3);
    \draw[arsEdge] (x3) -- (y3);
    \draw[virtualEdge] (x3) to[bend right=25] (y3); 
    \draw[arsEdge] (x3) to[bend left=15] (z3); \draw[arsEdge] (x3) to[bend right=15] (z3);
    \draw[arsEdge] (y3) to[bend left=15] (z3); \draw[arsEdge] (y3) to[bend right=15] (z3);
    
    \node[arsVertex] at (x3) {}; \node[arsVertex] at (y3) {}; \node[arsVertex] at (z3) {};

    \coordinate (x4) at ($(c4) + (340:1.5)$);
    \coordinate (y4) at ($(c4) + (270:1.5)$);
    \coordinate (z4) at ($(c4) + (305:2.8)$);
    
    \draw[mixEdge] (c4) -- (x4); 
    \draw[mixEdge] (c4) -- (y4);
    \draw[arsEdge] (x4) -- (y4);
    \draw[virtualEdge] (x4) to[bend right=25] (y4); 
    \draw[arsEdge] (x4) to[bend left=15] (z4); \draw[arsEdge] (x4) to[bend right=15] (z4);
    \draw[arsEdge] (y4) to[bend left=15] (z4); \draw[arsEdge] (y4) to[bend right=15] (z4);
    
    \node[arsVertex] at (x4) {}; \node[arsVertex] at (y4) {}; \node[arsVertex] at (z4) {};

\end{tikzpicture}
}

\newcommand{\fourbond}{
    \node[vertex] (u) at (0, 0) {};
    \node[vertex] (v) at (3, 0) {}; 
    \draw[edge] (u) to[bend left=30] (v);
    \draw[edge] (u) to[bend left=10] (v);
    \draw[edge] (u) to[bend right=10] (v);
    \draw[edge] (u) to[bend right=30] (v);
    \path (1.5, 0.8) (1.5, -0.8); 
}

\newcommand{\dfour}{
    \node[vertex] (v1) at (0, 1.5) {};
    \node[vertex] (v2) at (1.5, 1.5) {};
    \node[vertex] (v3) at (1.5, 0) {};
    \node[vertex] (v4) at (0, 0) {};
    \foreach \a/\b in {v1/v2, v2/v3, v3/v4, v4/v1} {
        \draw[edge] (\a) to[bend left=15] (\b);
        \draw[edge] (\a) to[bend right=15] (\b);
    }
}

\newcommand{\drawARS}[2]{
        \begin{scope}[shift={(#1)}]
            \coordinate (center) at (#2:2.0);
            \def\rad{0.75}

            \node[cvertex] (k3) at ($(center) + ({#2+180-25}:\rad)$) {};
            \node[cvertex] (k4) at ($(center) + ({#2+180+25}:\rad)$) {};
            \node[cvertex] (k1) at ($(center) + ({#2}:\rad)$) {};
            \node[cvertex] (k2) at ($(center) + ({#2+72}:\rad)$) {};
            \node[cvertex] (k5) at ($(center) + ({#2-72}:\rad)$) {};

            \draw[ovedge] (#1) -- (k3);
            \draw[ovedge] (#1) --(k4);

            \draw[cedge] (k1)--(k2) (k1)--(k3) (k1)--(k4) (k1)--(k5);
            \draw[cedge] (k2)--(k3) (k2)--(k4) (k2)--(k5);
            \draw[cedge] (k3)--(k5);
            \draw[cedge] (k4)--(k5);
        \end{scope}
    }

\newcommand{\hthirtyfive}{
\begin{tikzpicture}[scale=0.85]
    
    \node[vertex] (3) at (0, 0) {$3$}; 
    \node[vertex] (1) at (-1.8, 1) {$1$};
    \node[vertex] (2) at (-1.8, -1) {$2$};

    \node[vertex] (5) at (2.5, 0) {$5$}; 
    \node[vertex] (4) at (1.25, 1.5) {$4$}; 

    \node[vertex] (6) at (4.3, 1) {$6$};
    \node[vertex] (7) at (4.3, -1) {$7$};

    \draw[redge] (1)--(2); 
    \draw[redge] (1)--(3);
    \draw[redge] (2)--(3);
    \draw[redge] (3)--(4);
    \draw[redge] (3)--(5);
    \draw[redge] (4)--(5);
    \draw[redge] (5)--(6);
    \draw[redge] (5)--(7);
    \draw[redge] (6)--(7);

    \drawARS{1}{135}

    \drawARS{2}{225}

    \drawARS{4}{90}

    \drawARS{6}{45}

    \drawARS{7}{-45}

\end{tikzpicture}

}

\newcommand{\SI}{
\begin{tikzpicture}
    \node[cvertex] (u1) at (-0.65, 0.5) {}; \node[above=0.15cm]      at (u1) {$u_1$};
    \node[cvertex] (u2) at (0.65, 0.5) {};\node[above=0.15cm]      at (u2) {$u_2$};
    \node[cvertex] (u3) at (-0.65, -0.5) {}; \node[below=0.15cm]      at (u3) {$u_3$};
    \node[cvertex] (u4) at (0.65, -0.5) {};\node[below=0.15cm]      at (u4) {$u_4$};
    
    \draw[redge] (u1) to (u2);
    \draw[redge] (u3) to (u4);
    \draw[gray!60,densely dashed] (0,0) circle (1.3cm);

    \draw[very thick,->] (0,-1.5) -- node[right=0.1cm] {$\mathcal{S}_1$} (0,-2.5);

    \begin{scope}[yshift=-4.3cm]
        \node[cvertex] (u1) at (-0.8, 0.5) {};\node[above=0.15cm]      at (u1) {$u_1$};
        \node[cvertex] (u2) at (0.8, 0.5) {};\node[above=0.15cm]      at (u2) {$u_2$};
        \node[cvertex] (u3) at (-0.8, -0.5) {}; \node[below=0.15cm]      at (u3) {$u_3$};
        \node[cvertex] (u4) at (0.8, -0.5) {}; \node[below=0.15cm]      at (u4) {$u_4$};
        \node[ovvertex] (v) at (0, 0) {}; \node[above=0.15cm]      at (v) {$v$};
        
        \draw[gray!60,densely dashed] (0,0) circle (1.65cm);
        \draw[bedge] (u1) to (v);
        \draw[bedge] (u2) to (v);
        \draw[bedge] (u3) to (v);
        \draw[bedge] (u4) to (v);
    \end{scope}
\end{tikzpicture}
}

\newcommand{\SII}{
\begin{tikzpicture}
    \node[gvertex] (v) at (0, 0) {}; \node[above=0.15cm]      at (v) {$v$};
    \node[cvertex] (u1) at (-0.6, 0.6) {}; \node[above=0.15cm]      at (u1) {$u_1$};
    \node[cvertex] (u2) at (0.6, 0.6) {}; \node[above=0.15cm]      at (u2) {$u_2$};
    \node[cvertex] (u3) at (-0.6, -0.6) {}; \node[below=0.15cm]      at (u3) {$u_3$};
    \node[cvertex] (u4) at (0.6, -0.6) {};  \node[below=0.15cm]      at (u4) {$u_4$};
    
    \draw[redge] (v) to (u1);
    \draw[redge] (v) to (u2);
    \draw[redge] (v) to (u3);
    \draw[redge] (v) to (u4);
    
    \draw[gray!60,densely dashed] (0,0) circle (1.3cm);

    \draw[very thick,->] (0,-1.5) -- node[right=0.1cm] {$\mathcal{S}_2$} (0,-2.5);

    \begin{scope}[yshift=-4.3cm]
        \node[cvertex] (u1) at (-0.85, 0.85) {}; \node[above=0.15cm]      at (u1) {$u_1$};
        \node[cvertex] (u2) at (0.85, 0.85) {}; \node[above=0.15cm]      at (u2) {$u_2$};
        \node[cvertex] (u3) at (-0.85, -0.85) {};  \node[below=0.15cm]      at (u3) {$u_3$};
        \node[cvertex] (u4) at (0.85, -0.85) {};  \node[below=0.15cm]      at (u4) {$u_4$};
        
        \node[ovvertex] (v1) at (-0.35, 0.35) {}; \node[above =0.1cm]      at (v1) {$v_1$};
        \node[ovvertex] (v2) at (0.35, 0.35) {}; \node[above =0.1cm]      at (v2) {$v_2$};
        \node[ovvertex] (v3) at (-0.35, -0.35) {}; \node[below =0.1cm]      at (v3) {$v_3$};
        \node[ovvertex] (v4) at (0.35, -0.35) {}; \node[below =0.1cm]      at (v4) {$v_4$};
        
        \draw[gray!60,densely dashed] (0,0) circle (1.65cm);
        
        \draw[ovedge] (v1) to (v2);
        \draw[ovedge] (v1) to (v3);
        \draw[ovedge] (v1) to (v4);
        \draw[ovedge] (v2) to (v3);
        \draw[ovedge] (v2) to (v4);
        \draw[ovedge] (v3) to (v4);
        
        \draw[bedge] (u1) to (v1);
        \draw[bedge] (u2) to (v2);
        \draw[bedge] (u3) to (v3);
        \draw[bedge] (u4) to (v4);
    \end{scope}
\end{tikzpicture}
}

\newcommand{\SIII}{
\begin{tikzpicture}
    \node[cvertex] (u1) at (-0.6, 0) {}; \node[above=0.15cm]      at (u1) {$u_1$};
    \node[cvertex] (u2) at (0.6, 0) {};  \node[above=0.15cm]      at (u2) {$u_2$};
    
    \draw[redge] (u1) to (u2);
    \draw[gray!60,densely dashed] (0,0) circle (1.3cm);

   \draw[very thick,->] (0,-1.5) -- node[right=0.1cm] {$\mathcal{S}_3$} (0,-2.5);

    \begin{scope}[yshift=-4.3cm]
        \node[cvertex] (u1) at (-1.3, 0) {}; \node[above=0.15cm]      at (u1) {$u_1$};
        \node[cvertex] (u2) at (1.3, 0) {}; \node[above=0.15cm]      at (u2) {$u_2$};
        
        \node[ovvertex] (v1) at (-0.6, 0) {}; \node[above=0.1cm]      at (v1) {$v_1$};
        \node[ovvertex] (v2) at (0.6, 0) {}; \node[above=0.1cm]      at (v2) {$v_2$};
        \node[ovvertex] (v3) at (0, 0.75) {}; \node[above=0.15cm]      at (v3) {$v_3$};
        \node[ovvertex] (v4) at (-0.4, -0.6) {}; \node[below=0.15cm]      at (v4) {$v_4$};
        \node[ovvertex] (v5) at (0.4, -0.6) {}; \node[below=0.15cm]      at (v5) {$v_5$};

        \draw[gray!60,densely dashed] (0,0) circle (1.65cm);

        \draw[ovedge] (v3) to (v1); \draw[ovedge] (v3) to (v2);
        \draw[ovedge] (v3) to (v4); \draw[ovedge] (v3) to (v5);
        \draw[ovedge] (v4) to (v1); \draw[ovedge] (v4) to (v2);
        \draw[ovedge] (v4) to (v5);
        \draw[ovedge] (v5) to (v1); \draw[ovedge] (v5) to (v2);

        \draw[bedge] (u1) to (v1);
        \draw[bedge] (u2) to (v2);
    \end{scope}
\end{tikzpicture}
}

\newcommand{\MIone}{
\begin{tikzpicture}[scale=0.95]
    \node[cvertex] (u1) at (0, 0.8) {}; \node[above=0.15cm]      at (u1) {$u_1$};
    \node[cvertex] (u2) at (-0.7, -0.3) {}; \node[left=0.1cm]      at (u2) {$u_2$};
    \node[cvertex] (u3) at (0.7, -0.3) {}; \node[right=0.1cm]      at (u3) {$u_3$};
    
    \draw[redge] (u1) to (u2);
    \draw[redge] (u1) to (u3);
    \draw[gray!60,densely dashed] (0,0) circle (1.3cm);

    \draw[very thick,->] (0,-1.5) -- node[right=0.1cm] {$\mathcal{M}_1$} (0,-2.5);

    \begin{scope}[yshift=-4.3cm]
        \node[cvertex] (u1) at (0, 1.1) {}; \node[above=0.15cm]      at (u1) {$u_1$};
        \node[cvertex] (u2) at (-1, -0.3) {}; \node[left=0.1cm]      at (u2) {$u_2$};
        \node[cvertex] (u3) at (1, -0.3) {}; \node[right=0.1cm]      at (u3) {$u_3$};
        \node[ovvertex] (v) at (0, 0) {}; \node[below=0.15cm]      at (v) {$v$};
        
        \draw[gray!60,densely dashed] (0,0) circle (1.65cm);
        
        \draw[bedge] (u2) to (v);
        \draw[bedge] (u3) to (v);
        
        \draw[bedge] (u1) to[bend left=25] (v);
        \draw[bedge] (u1) to[bend right=25] (v);
    \end{scope}
\end{tikzpicture}
}

\newcommand{\MItwo}{
\begin{tikzpicture}[scale=0.95]
    \node[cvertex] (u1) at (-0.8, 0.15) {}; \node[above=0.15cm]      at (u1) {$u_1$};
    \node[cvertex] (u2) at (0.8, 0.15) {}; \node[above=0.15cm]      at (u2) {$u_2$};

    \draw[redge] (u1) to[bend left=20] (u2);
    \draw[redge] (u1) to[bend right=20] (u2);

    \draw[gray!60,densely dashed] (0,0) circle (1.3cm);

    \draw[very thick,->] (0,-1.5) -- node[right=0.1cm] {$\mathcal{M}_1$} (0,-2.5);

    \begin{scope}[yshift=-4.3cm]
        \node[cvertex] (u1) at (-1, 0.45) {}; \node[above=0.15cm]      at (u1) {$u_1$};
        \node[cvertex] (u2) at (1, 0.45) {}; \node[above=0.15cm]      at (u2) {$u_2$};
        \node[ovvertex] (v) at (0, -0.7) {}; \node[below=0.15cm]      at (v) {$v$};
         
        \draw[gray!60,densely dashed] (0,0) circle (1.65cm);

        \draw[bedge] (u1) to[bend left=25] (v);
        \draw[bedge] (u1) to[bend right=25] (v);
        \draw[bedge] (u2) to[bend left=25] (v);
        \draw[bedge] (u2) to[bend right=25] (v);
        
    \end{scope}
\end{tikzpicture}
}

\newcommand{\MII}{
\begin{tikzpicture}[scale=0.95]
    \node[cvertex] (u1) at (-0.8, 0) {}; \node[above=0.15cm]      at (u1) {$u_1$};
    \node[cvertex] (u2) at (0.8, 0) {}; \node[above=0.15cm]      at (u2) {$u_2$};
    
    \draw[redge] (u1) to (u2);
    \draw[gray!60,densely dashed] (0,0) circle (1.3cm);

    \draw[very thick,->] (0,-1.5) -- node[right=0.1cm] {$\mathcal{M}_2$} (0,-2.5);

    \begin{scope}[yshift=-4.3cm]
        \node[cvertex] (u1) at (-1.3, 0) {}; \node[above=0.15cm]      at (u1) {$u_1$};
        \node[cvertex] (u2) at (1.3, 0) {}; \node[above=0.15cm]      at (u2) {$u_2$};
        
        \node[ovvertex] (v1) at (-0.65, 0) {}; \node[below=0.15cm]      at (v1) {$v_1$};
        \node[ovvertex] (v2) at (0.65, 0) {}; \node[below=0.15cm]      at (v2) {$v_2$};
        
        \draw[gray!60,densely dashed] (0,0) circle (1.65cm);
        
        \draw[bedge] (u1) to (v1);
        \draw[bedge] (u2) to (v2);
        
        \draw[ovedge] (v1) to (v2);                     
        \draw[ovedge] (v1) to[bend left=45] (v2);       
        \draw[ovedge] (v1) to[bend right=45] (v2);      
    \end{scope}
\end{tikzpicture}
}

\newcommand{\Kfourorient}{
\begin{tikzpicture}[scale=0.75]

    \begin{scope}[xshift=0cm, yshift=0cm]
        \node[cvertex] (w1) at (0, 1.5) {};       \node[above=0.15cm]      at (w1) {$v_1$};
        \node[cvertex] (w2) at (-1.3, -0.75) {}; \node[below=0.12cm]  at (w2) {$v_2$};
        \node[cvertex] (w3) at (1.3, -0.75) {};  \node[below=0.12cm] at (w3) {$v_3$};
        \node[cvertex] (w4) at (0, 0) {};        \node[below=0.12cm] at (w4) {$v_4$};
        
        \draw[rdedge] (w3) -- (w1);
        \draw[rdedge] (w4) -- (w1);
        \draw[rdedge] (w1) -- (w2);
        \draw[rdedge] (w4) -- (w2);
        \draw[rdedge] (w2) -- (w3);
        \draw[rdedge] (w3) -- (w4);
        
    \end{scope}

    \begin{scope}[xshift=4cm, yshift=0cm]
        \node[cvertex] (w1) at (0, 1.5) {};       \node[above=0.15cm]      at (w1) {$v_1$};
        \node[cvertex] (w2) at (-1.3, -0.75) {}; \node[below=0.12cm]  at (w2) {$v_2$};
        \node[cvertex] (w3) at (1.3, -0.75) {};  \node[below=0.12cm] at (w3) {$v_3$};
        \node[cvertex] (w4) at (0, 0) {};        \node[below=0.12cm] at (w4) {$v_4$};
        
        \draw[bdedge] (w3) -- (w1);
        \draw[bdedge] (w1) -- (w4);
        \draw[bdedge] (w2) -- (w1);
        \draw[bdedge] (w4) -- (w2);
        \draw[bdedge] (w3) -- (w2);
        \draw[bdedge] (w4) -- (w3);
        
    \end{scope}

    \begin{scope}[xshift=8cm, yshift=0cm]
        \node[cvertex] (w1) at (0, 1.5) {};       \node[above=0.15cm]      at (w1) {$v_1$};
        \node[cvertex] (w2) at (-1.3, -0.75) {}; \node[below=0.12cm]  at (w2) {$v_2$};
        \node[cvertex] (w3) at (1.3, -0.75) {};  \node[below=0.12cm] at (w3) {$v_3$};
        \node[cvertex] (w4) at (0, 0) {};        \node[below=0.12cm] at (w4) {$v_4$};
        
        \draw[ovdedge] (w3) -- (w1);
        \draw[ovdedge] (w4) -- (w1);
        \draw[ovdedge] (w1) -- (w2);
        \draw[ovdedge] (w2) -- (w4);
        \draw[ovdedge] (w3) -- (w2);
        \draw[ovdedge] (w4) -- (w3);

    \end{scope}

    \begin{scope}[xshift=12cm, yshift=0cm]
        \node[cvertex] (w1) at (0, 1.5) {};       \node[above=0.15cm]      at (w1) {$v_1$};
        \node[cvertex] (w2) at (-1.3, -0.75) {}; \node[below=0.12cm]  at (w2) {$v_2$};
        \node[cvertex] (w3) at (1.3, -0.75) {};  \node[below=0.12cm] at (w3) {$v_3$};
        \node[cvertex] (w4) at (0, 0) {};        \node[below=0.12cm] at (w4) {$v_4$};
        
        \draw[cdedge] (w1) -- (w3);
        \draw[cdedge] (w4) -- (w1);
        \draw[cdedge] (w2) -- (w1);
        \draw[cdedge] (w4) -- (w2);
        \draw[cdedge] (w3) -- (w2);
        \draw[cdedge] (w3) -- (w4);

    \end{scope}

\end{tikzpicture}
}

\newcommand{\Kfive}{
\begin{tikzpicture}[scale=0.75]
    \node[cvertex] (0) at (90:1.5) {};
    \node[cvertex] (1) at (18:1.5) {};
    \node[cvertex] (2) at (306:1.5) {};
    \node[cvertex] (3) at (234:1.5) {};
    \node[cvertex] (4) at (162:1.5) {};
    
    \draw[thick] 
        (0)--(1) (0)--(2) (0)--(3) (0)--(4) 
        (1)--(2) (1)--(3) (1)--(4) 
        (2)--(3) (2)--(4) 
        (3)--(4);
\end{tikzpicture}
}

\newcommand{\Hocta}{
\begin{tikzpicture}[scale=0.8]
    \node[cvertex]       (v1) at (0, 1.5) {};
    \node[cvertex] (v2) at (1.5, -0.8) {};
    \node[cvertex]  (v3) at (-1.5, -0.8) {};

    \node[cvertex]       (v4) at (0, 0.5) {};
    \node[cvertex]  (v5) at (0.6, -0.4) {};
    \node[cvertex] (v6) at (-0.6, -0.4) {};

    \draw[thick] 
        (v1)--(v2) (v2)--(v3) (v3)--(v1)
        (v4)--(v5) (v5)--(v6) (v6)--(v4)
        (v1)--(v4) (v2)--(v5) (v3)--(v6)
        (v1)--(v5) (v2)--(v6) (v3)--(v4);
\end{tikzpicture}
}

\newcommand{\Gsevenfirst}{
\begin{tikzpicture}[scale=0.8]
    \node[cvertex]           (0) at (90:1.5) {};
    \node[cvertex]   (1) at ({90-360/7}:1.5) {};
    \node[cvertex] (2) at ({90-2*360/7}:1.5) {};
    \node[cvertex] (3) at ({90-3*360/7}:1.5) {};
    \node[cvertex] (4) at ({90-4*360/7}:1.5) {};
    \node[cvertex] (5) at ({90-5*360/7}:1.5) {};
    \node[cvertex] (6) at ({90-6*360/7}:1.5) {};
    
    \draw[thick] 
        (0)--(3) (0)--(4) (0)--(5) (0)--(6) 
        (1)--(3) (1)--(4) (1)--(5) (1)--(6) 
        (2)--(3) (2)--(4) (2)--(5) (2)--(6) 
        (3)--(5) 
        (4)--(6);
\end{tikzpicture}
}

\newcommand{\Gsevensecond}{
\begin{tikzpicture}[scale=0.8]
    \node[cvertex]           (0) at (90:1.5) {};
    \node[cvertex]   (1) at ({90-360/7}:1.5) {};
    \node[cvertex] (2) at ({90-2*360/7}:1.5) {};
    \node[cvertex] (3) at ({90-3*360/7}:1.5) {};
    \node[cvertex] (4) at ({90-4*360/7}:1.5) {};
    \node[cvertex] (5) at ({90-5*360/7}:1.5) {};
    \node[cvertex] (6) at ({90-6*360/7}:1.5) {};
    
    \draw[thick] 
        (0)--(2) (0)--(3) (0)--(4) (0)--(5) 
        (1)--(3) (1)--(4) (1)--(5) (1)--(6) 
        (2)--(4) (2)--(5) (2)--(6) 
        (3)--(5) (3)--(6) 
        (4)--(6);
\end{tikzpicture}
}

\newcommand{\GeightA}{
\begin{tikzpicture}[scale=0.7]
    \node[cvertex]  (1) at (90:1.5) {};
    \node[cvertex]  (2) at (45:1.5) {};
    \node[cvertex]   (3) at (0:1.5) {};
    \node[cvertex] (4) at (315:1.5) {};
    \node[cvertex] (5) at (270:1.5) {};
    \node[cvertex] (6) at (225:1.5) {};
    \node[cvertex] (7) at (180:1.5) {};
    \node[cvertex] (8) at (135:1.5) {};
    
    \draw[thick] 
        (1)--(4) (1)--(5) (1)--(6) (1)--(8) 
        (2)--(4) (2)--(6) (2)--(7) (2)--(8) 
        (3)--(5) (3)--(6) (3)--(7) (3)--(8) 
        (4)--(5) (4)--(7)
        (5)--(7) 
        (6)--(8);
\end{tikzpicture}
}
\newcommand{\GeightB}{
\begin{tikzpicture}[scale=0.7]
    \node[cvertex]  (1) at (90:1.5) {};
    \node[cvertex]  (2) at (45:1.5) {};
    \node[cvertex]   (3) at (0:1.5) {};
    \node[cvertex] (4) at (315:1.5) {};
    \node[cvertex] (5) at (270:1.5) {};
    \node[cvertex] (6) at (225:1.5) {};
    \node[cvertex] (7) at (180:1.5) {};
    \node[cvertex] (8) at (135:1.5) {};
    
    \draw[thick] 
        (1)--(3) (1)--(5) (1)--(6) (1)--(7) 
        (2)--(4) (2)--(5) (2)--(6) (2)--(8) 
        (3)--(5) (3)--(7) (3)--(8) 
        (4)--(6) (4)--(7) (4)--(8) 
        (5)--(7) 
        (6)--(8);
\end{tikzpicture}
}
\newcommand{\GeightC}{
\begin{tikzpicture}[scale=0.73]
    \node[cvertex]  (1) at (90:1.5) {};
    \node[cvertex]  (2) at (45:1.5) {};
    \node[cvertex]   (3) at (0:1.5) {};
    \node[cvertex] (4) at (315:1.5) {};
    \node[cvertex] (5) at (270:1.5) {};
    \node[cvertex] (6) at (225:1.5) {};
    \node[cvertex] (7) at (180:1.5) {};
    \node[cvertex] (8) at (135:1.5) {};
    
    \draw[thick] 
        (1)--(3) (1)--(5) (1)--(6) (1)--(7) 
        (2)--(4) (2)--(5) (2)--(7) (2)--(8) 
        (3)--(5) (3)--(6) (3)--(8) 
        (4)--(6) (4)--(7) (4)--(8) 
        (5)--(7) 
        (6)--(8);
\end{tikzpicture}
}

\newcommand{\Gnine}{
\begin{tikzpicture}[scale=0.73]
    \node[cvertex]  (1) at (90:1.5) {};
    \node[cvertex]  (2) at (50:1.5) {};
    \node[cvertex]  (3) at (10:1.5) {};
    \node[cvertex] (4) at (330:1.5) {};
    \node[cvertex] (5) at (290:1.5) {};
    \node[cvertex] (6) at (250:1.5) {};
    \node[cvertex] (7) at (210:1.5) {};
    \node[cvertex] (8) at (170:1.5) {};
    \node[cvertex] (9) at (130:1.5) {};
    
    \draw[thick] 
        (1)--(3) (1)--(5) (1)--(7) (1)--(8) 
        (2)--(4) (2)--(6) (2)--(7) (2)--(9) 
        (3)--(5) (3)--(7) (3)--(8) 
        (4)--(6) (4)--(8) (4)--(9) 
        (5)--(7) (5)--(9) 
        (6)--(8) (6)--(9);
\end{tikzpicture}
}

\newcommand{\GtenA}{
\begin{tikzpicture}[scale=0.8]
    \node[cvertex]     (3)  at (-1.8, 0) {};
    \node[cvertex] (10) at (1.8, 0) {};

    \node[cvertex] (5) at (0, 0.8) {};
    \node[cvertex]        (7) at (-1.2, 1.4) {};
    \node[cvertex]       (2) at (1.2, 1.4) {};
    \node[cvertex]       (9) at (0, 2.2) {};

    \node[cvertex] (4) at (0, -0.8) {};
    \node[cvertex]        (6) at (-1.2, -1.4) {};
    \node[cvertex]       (1) at (1.2, -1.4) {};
    \node[cvertex]       (8) at (0, -2.2) {};

    \draw[thick]
        (2)--(5) (2)--(7) (2)--(9) 
        (5)--(7) (5)--(9) 
        (7)--(9)
        
        (1)--(4) (1)--(6) (1)--(8) 
        (4)--(6) (4)--(8) 
        (6)--(8)
        
        (3)--(6) (3)--(8) (3)--(7) (3)--(9)
        
        (10)--(1) (10)--(4) (10)--(2) (10)--(5);
\end{tikzpicture}
}

\newcommand{\GtenB}{
\begin{tikzpicture}[scale=0.8]
    \node[cvertex]     (9)  at (-1.4, 0) {};
    \node[cvertex] (10) at (1.4, 0) {};

    \node[cvertex]  (2) at (-0.6, 0.8) {};
    \node[cvertex] (3) at (0.6, 0.8) {};
    \node[cvertex]  (6) at (-1, 1.7) {};
    \node[cvertex] (8) at (1, 1.7) {};
    \node[cvertex] (7) at (0, 2.2) {};

    \node[cvertex]  (1) at (-0.4, -1) {};
    \node[cvertex] (4) at (0.4, -1) {};
    \node[cvertex] (5) at (0, -2.2) {};

    \draw[thick]
        (2)--(6) (2)--(7) (2)--(8) 
        (3)--(6) (3)--(7) (3)--(8) 
        (6)--(7) (6)--(8) 
        (7)--(8)
        
        (1)--(4) (1)--(5) 
        (4)--(5)
        
        (9)--(1) (9)--(4) (9)--(5) 
        (9)--(2)
        
        (10)--(1) (10)--(4) (10)--(5) 
        (10)--(3);
\end{tikzpicture}
}

\newcommand{\Geleven}{
\begin{tikzpicture}[scale=0.8]
    \node[cvertex]  (3) at (-1.8, 0) {};
    \node[cvertex] (8) at (1.8, 0) {};

    \node[cvertex] (5) at (0, 0.8) {};
    \node[cvertex]        (7) at (-1.2, 1.4) {};
    \node[cvertex]       (2) at (1.2, 1.4) {};
    \node[cvertex]       (9) at (0, 2.2) {};

    \node[cvertex] (1)  at (0, -0.8) {};
    \node[cvertex]     (10) at (-1.2, -1.4) {};
    \node[cvertex]    (11) at (1.2, -1.4) {};
    \node[cvertex]  (4)  at (-0.6, -2.2) {};
    \node[cvertex] (6)  at (0.6, -2.2) {};

    \draw[thick]
        (2)--(5) (2)--(7) (2)--(9) 
        (5)--(7) (5)--(9) 
        (7)--(9)
        
        (1)--(4) (1)--(6) (1)--(10) (1)--(11)
        (4)--(6) (4)--(10) (4)--(11)
        (6)--(10) (6)--(11)
        
        (3)--(7) (3)--(9) 
        (3)--(10)
        
        (8)--(2) (8)--(5) 
        (8)--(11)
        
        (3)--(8);
\end{tikzpicture}
}

\newcommand{\Gtwelve}{
\begin{tikzpicture}[scale=0.8]
    \node[cvertex]     (3)  at (-1.8, 0) {};
    \node[cvertex] (10) at (1.8, 0) {};

    \node[cvertex] (1) at (0, 1.4) {};
    \node[cvertex]        (5) at (-1.2, 1.7) {};
    \node[cvertex]       (7) at (1.2, 1.7) {};
    \node[cvertex]       (9) at (0, 2.2) {};

    \node[cvertex]  (2)  at (-1.0, -0.8) {};
    \node[cvertex] (4)  at (1.3, -0.8) {};
    \node[cvertex]        (6)  at (-1.4, -1.8) {};
    \node[cvertex]    (11) at (1.4, -1.8) {};
    \node[cvertex]       (8)  at (-0.6, -2.6) {};
    \node[cvertex]    (12) at (0.6, -2.6) {};

    \draw[thick]
        (1)--(5) (1)--(7) (1)--(9) 
        (5)--(7) (5)--(9) 
        (7)--(9)
        
        (2)--(6) (2)--(8) (2)--(11) (2)--(12)
        (4)--(8) (4)--(11) (4)--(12)
        (6)--(8) (6)--(11) (6)--(12)
        (8)--(12)
        
        (3)--(7) (3)--(9) 
        (3)--(11)
        
        (10)--(1) (10)--(5)
        (10)--(4)
        
        (3)--(10);
\end{tikzpicture}
}
\newcommand{\Gthirteen}{
\begin{tikzpicture}[scale=0.8]
    \node[cvertex]  (3) at (-1.8, 0) {};
    \node[cvertex] (6) at (1.8, 0) {};

    \node[cvertex]     (9)  at (-0.8, 0.8) {};
    \node[cvertex] (12) at (0.8, 0.8) {};
    \node[cvertex]           (1)  at (-1.4, 1.6) {};
    \node[cvertex]          (4)  at (1.4, 1.6) {};
    \node[cvertex]  (13) at (-0.6, 2.4) {};
    \node[cvertex]    (7)  at (0.6, 2.4) {};

    \node[cvertex]          (8)  at (0, -1.0) {};
    \node[cvertex]        (10) at (-1.4, -1.6) {};
    \node[cvertex]       (11) at (1.4, -1.6) {};
    \node[cvertex]     (2)  at (-0.6, -2.4) {};
    \node[cvertex]    (5)  at (0.6, -2.4) {};

    \draw[thick]
        (1)--(4) (1)--(7) (1)--(9) (1)--(13)
        (4)--(7) (4)--(12) (4)--(13)
        (7)--(12) (7)--(13)
        (9)--(13)
        
        (2)--(5) (2)--(8) (2)--(10) (2)--(11)
        (5)--(8) (5)--(10) (5)--(11)
        (8)--(10) (8)--(11)
        
        (3)--(9) (3)--(12) 
        (3)--(10)
        
        (6)--(9) (6)--(12) 
        (6)--(11)
        
        (3)--(6);
\end{tikzpicture}
}

\newcommand{\Gfourteen}{
\begin{tikzpicture}[scale=0.8]
    \node[cvertex]  (10) at (-2.0, -0.3) {};
    \node[cvertex] (13) at (2.0, -0.3) {};

    \node[cvertex]    (6)  at (-1.2, 0.5) {};
    \node[cvertex]   (2)  at (1.2, 0.5) {};
    \node[cvertex] (12) at (-1.2, 1) {};
    \node[cvertex](11) at (1.2, 1) {};
    \node[cvertex]   (1)  at (-1, 1.6) {};
    \node[cvertex]   (5)  at (0, 2.2) {};
    \node[cvertex]   (7)  at (1, 1.6) {};

    \node[cvertex]    (3)  at (-1.2, -0.8) {};
    \node[cvertex]   (4)  at (1.2, -0.8) {};
    \node[cvertex]   (8)  at (-0.8, -2.2) {};
    \node[cvertex]   (9)  at (0, -1) {};
    \node[cvertex]  (14) at (0.8, -2.2) {};

    \draw[thick]
        (2)--(6)
        (6)--(12) (2)--(11)
        (1)--(12) (1)--(11)
        (5)--(12) (5)--(11)
        (7)--(12) (7)--(11)
        (1)--(5) (1)--(7) (5)--(7)
        
        (3)--(8) (3)--(9) (3)--(14)
        (4)--(8) (4)--(9) (4)--(14)
        (8)--(9) (8)--(14) (9)--(14)
        
        (10)--(2) (10)--(6) (10)--(3)
        (13)--(2) (13)--(6) (13)--(4)
        
        (10)--(13);
\end{tikzpicture}
}

\newcommand{\Gfifteen}{
\begin{tikzpicture}[scale=0.8]
    \node[cvertex]    (8)  at (0, 1.2) {};
    \node[cvertex] (14) at (-1.8, -0) {};
    \node[cvertex]    (4)  at (1.8, -0.) {};

    \node[cvertex]    (1)  at (2.5, 2.2) {};
    \node[cvertex]   (5)  at (3.5, 2.2) {};
    \node[cvertex]    (9)  at (2.5, 1.5) {};
    \node[cvertex](12) at (3.5, 1.5) {};

    \node[cvertex](10) at (-2.5, 2.2) {};
    \node[cvertex] (13) at (-3.5, 2.2) {};
    \node[cvertex]   (6)  at (-2.5, 1.5) {};
    \node[cvertex]    (2)  at (-3.5, 1.5) {};

    \node[cvertex]    (3)  at (-0.5, -1.5) {};
    \node[cvertex]    (7)  at (-0.5, -2.1) {};
    \node[cvertex](11) at (0.5, -1.5) {};
    \node[cvertex](15) at (0.5, -2.1) {};

    \draw[thick]
        (1)--(5) (1)--(12) (1)--(9) 
        (5)--(12) (5)--(9) 
        (12)--(9)
        
        (13)--(10) (13)--(2) (13)--(6) 
        (10)--(2) (10)--(6) 
        (2)--(6)
        
        (3)--(11) (3)--(7) (3)--(15) 
        (11)--(7) (11)--(15) 
        (7)--(15)
        
        (8)--(1)  (8)--(5)
        (8)--(10) (8)--(13)
        
        (14)--(2) (14)--(6)
        (14)--(3) (14)--(7)

        (4)--(9)  (4)--(12)
        (4)--(11) (4)--(15);
\end{tikzpicture}
}

\newcommand{\Heleven}{
\begin{tikzpicture}[scale=0.8]
\begin{scope}[rotate=90]
    \node[cvertex] (10) at (0, 0) {};

    \node[cvertex] (1) at (-0.8, 1.0) {};
    \node[cvertex] (2) at (-0.8, -1.0) {};
    \node[cvertex]      (7) at (-1.4, 1.8) {};
    \node[cvertex]      (9) at (-1.4, -1.8) {};
    \node[cvertex]       (5) at (-2.2, 0) {};

    \node[cvertex] (3) at (0.8, 1.0) {};
    \node[cvertex] (4) at (0.8, -1.0) {};
    \node[cvertex]       (6)  at (1.4, 1.8) {};
    \node[cvertex]       (8)  at (1.4, -1.8) {};
    \node[cvertex]    (11) at (2.2, 0) {};

    \draw[thick]
        (1)--(5) (1)--(7) (1)--(9) 
        (2)--(5) (2)--(7) (2)--(9)
        (5)--(7) (5)--(9) 
        (7)--(9)
        
        (3)--(6) (3)--(8) (3)--(11)
        (4)--(6) (4)--(8) (4)--(11)
        (6)--(8) (6)--(11) 
        (8)--(11)
        
        (10)--(1) (10)--(2)
        
        (10)--(3) (10)--(4);
        \end{scope}
\end{tikzpicture}
}

\newcommand{\Htwelve}{
\begin{tikzpicture}[scale=0.8]
    \begin{scope}[rotate=90]
        \node[cvertex] (6) at (0, 0) {};
    
        \node[cvertex]   (9)  at (-0.8, 1.0) {};
        \node[cvertex](10) at (-0.8, -1.0) {};
        
        \node[cvertex] (1) at (-1.6, 1.8) {};
        \node[cvertex]  (7) at (-2.2, 0) {};
        \node[cvertex] (4) at (-1.6, -1.8) {};
    
        \node[cvertex]   (3)  at (0.8, 1.0) {};
        \node[cvertex](11) at (0.8, -1.0) {};
        
        \node[cvertex]    (8)  at (1.4, 1.8) {};
        \node[cvertex] (12) at (2.2, 0.8) {};
        \node[cvertex]    (5)  at (2.2, -0.8) {};
        \node[cvertex]    (2)  at (1.4, -1.8) {};
    
        \draw[thick]
            (1)--(4) (1)--(7) (1)--(9) (1)--(10)
            (4)--(7) (4)--(9) (4)--(10)
            (7)--(9) (7)--(10)
            
            (3)--(8) (3)--(11) (3)--(12)
            (11)--(2) (11)--(5)
            (8)--(12) (8)--(5) (8)--(2)
            (12)--(5) (12)--(2)
            (5)--(2)
            
            (6)--(9) (6)--(10)
            
            (6)--(3) (6)--(11);
        \end{scope}
\end{tikzpicture}
}

\newcommand{\Hthirteen}{
\begin{tikzpicture}[scale=0.8]
    \begin{scope}[rotate=90]
    \node[cvertex] (7) at (0, 0) {};

    \node[cvertex] (11) at (-0.8, 1.0) {};
    \node[cvertex]    (3)  at (-0.8, -1.0) {};
    
    \node[cvertex]    (1)  at (-1.4, 1.8) {};
    \node[cvertex]     (5)  at (-2.2, 0.8) {};
    \node[cvertex]     (8)  at (-2.2, -0.8) {};
    \node[cvertex] (10) at (-1.4, -1.8) {};

    \node[cvertex] (12) at (0.8, 1.0) {};
    \node[cvertex]    (4)  at (0.8, -1.0) {};
    
    \node[cvertex]    (2)  at (1.4, 1.8) {};
    \node[cvertex]    (6)  at (2.2, 0.8) {};
    \node[cvertex] (13) at (2.2, -0.8) {};
    \node[cvertex]    (9)  at (1.4, -1.8) {};

    \draw[thick]
        (11)--(1) (11)--(5) 
        (3)--(8) (3)--(10)
        (11)--(3) 
        (1)--(5) (1)--(8) (1)--(10)
        (5)--(8) (5)--(10)
        (8)--(10)
        
        (12)--(2) (12)--(6)
        (4)--(9) (4)--(13)
        (12)--(4) 
        (2)--(6) (2)--(9) (2)--(13)
        (6)--(9) (6)--(13)
        (9)--(13)
        
        (7)--(11) (7)--(3)
        
        (7)--(12) (7)--(4);
    \end{scope}
\end{tikzpicture}
}

\newcommand{\Hfourteen}{
\begin{tikzpicture}[scale=0.8]
    \begin{scope}[rotate=90]
            \node[cvertex] (12) at (0, 0) {};
        
            \node[cvertex] (3) at (-0.8, 1.0) {};
            \node[cvertex] (4) at (-0.8, -1.0) {};
            \node[cvertex]      (8) at (-1.4, 1.8) {};
            \node[cvertex]      (9) at (-1.4, -1.8) {};
            \node[cvertex]    (11) at (-2.2, 0) {};
        
            \node[cvertex] (2) at (0.8, 1.0) {};
            \node[cvertex] (6) at (0.8, -1.0) {};
            \node[cvertex]    (10) at (1.4, 1.5) {};
            \node[cvertex]    (13) at (1.4, -1.5) {};
            \node[cvertex]       (1) at (2.5, 1.8) {};
            \node[cvertex]    (14) at (1.6, 0.6) {};
            \node[cvertex]       (5) at (1.6, -0.6) {};
            \node[cvertex]       (7) at (2.5, -1.8) {};
        
            \draw[thick]
                (3)--(8) (3)--(9) (3)--(11)
                (4)--(8) (4)--(9) (4)--(11)
                (8)--(9) (8)--(11) (9)--(11)
                
                (2)--(6) (2)--(10) (2)--(13)
                (6)--(10) (6)--(13)
                (10)--(1) (10)--(14)
                (13)--(5) (13)--(7)
                (1)--(14) (1)--(5) (1)--(7)
                (14)--(5) (14)--(7)
                (5)--(7)
                
                (12)--(3) (12)--(4)
                (12)--(2) (12)--(6);
        \end{scope}
    \end{tikzpicture}
}

\newcommand{\Hfifteen}{
\begin{tikzpicture}[scale=0.8]
    \begin{scope}[rotate=90]
            \node[cvertex] (14) at (0, 0) {};
        
            \node[cvertex] (3)  at (0.8, 1.0) {};
            \node[cvertex] (4)  at (0.8, -1.0) {};
            \node[cvertex]    (11) at (1.4, 1.8) {};
            \node[cvertex]       (9)  at (1.4, -1.8) {};
            \node[cvertex]    (15) at (2.2, 0) {};
        
            \node[cvertex]  (6)  at (-0.5, 1.0) {};
            \node[cvertex]  (8)  at (-0.5, -1.0) {};
            \node[cvertex]       (2)  at (-1.2, 1.5) {};
            \node[cvertex]    (13) at (-1.2, -1.5) {};
            \node[cvertex]    (12) at (-1.8, 1.5) {};
            \node[cvertex]    (10) at (-1.8, -1.5) {};
            \node[cvertex]       (5)  at (-2.4, 1.8) {};
            \node[cvertex]       (7)  at (-2.4, -1.8) {};
            \node[cvertex]        (1)  at (-3.0, 0) {};
        
            \draw[thick]
                (3)--(9) (3)--(11) (3)--(15)
                (4)--(9) (4)--(11) (4)--(15)
                (9)--(11) (9)--(15) (11)--(15)
                
                (6)--(8) (6)--(2) (6)--(13)
                (8)--(2) (8)--(13)
                (2)--(13)
                (2)--(12) (13)--(10) 
                (12)--(1) (12)--(5) (12)--(7)
                (10)--(1) (10)--(5) (10)--(7)
                (1)--(5) (1)--(7) (5)--(7)
                
                (14)--(3) (14)--(4)
                (14)--(6) (14)--(8);
        \end{scope}
    \end{tikzpicture}
}

\title{Computing and Bounding the Number of Eulerian Orientations for Certain Classes of $4$-Regular Graphs}
\date{}

\author[1,2]{Evangelos Bartzos}
\author[3] {Michalis Samaris}

\affil[1]{“Athena” Research Center, Maroussi, Greece}
\affil[2]{Department of Informatics \& Telecommunications, National \& Kapodistrian University of Athens, Athens, Greece 
}
\affil[3]{Athens University of Economics and Business, Athens, Greece}

\begin{document}
\maketitle

\maketitle

	\begin{abstract}
		An Eulerian orientation of a $4$-regular undirected graph (simple or multigraph) $G=(V,E)$ with $n=|V|$ vertices is an assignment of directions to its edges such that every vertex $v \in V$ has the same indegree and outdegree. 
        In the present article, we improve the bounds on the number of Eulerian orientations for certain classes of connected, loopless $4$-regular graphs. 
        The previous bound is due to M. Las Vergnas (1983) and is exactly $9\cdot 2^{n-3}$, which is a sharp bound for a certain family of multigraphs with $n\geq 4$.
        Here, we show that the number of Eulerian orientations for all biconnected $4$-regular multigraphs is at most $2^n+2$, which is also sharp.
        We exhibit families of graphs that attain this maximum value. 
        For simple graphs, we prove an upper bound of $\mathcal{O}(3^{n/2})$ in the biconnected case and $\mathcal{O}(6^{n/3})$ for the separable case.
        Additionally, we provide a divide-and-conquer algorithm that leverages structural properties to compute the exact number of Eulerian orientations for separable graphs without exhaustive enumeration.
        Finally, we analyze the effect of standard inductive construction operations, used to generate $4$-regular graphs from smaller ones, as shown by F.Bories et.al. (1983) for simple graphs and by G.Ding et.al. (2003) for multigraphs, on the number of Eulerian orientations.
	\end{abstract}

\section{Introduction}
        An orientation of a graph, whether it is a \emph{simple graph} or a \emph{multigraph}, is an assignment of a direction to each of its edges.
        Graph orientations with outdegree (or equivalently, indegree) constraints naturally arise in network flow problems~\cite{Asahiro2022}, but they also find applications in a variety of graph-theoretic contexts such as shortest paths \cite{Shortest_Paths}, graph coloring~\cite{graph_coloring}, maximum matching~\cite{matching}, or in other mathematical areas such as algebraic bounds~\cite{betISSAC} and statistical physics~\cite{welsh1990}.

        An \emph{Eulerian orientation} is an orientation in which each vertex has equal indegree and outdegree.
        Clearly, a graph admits an Eulerian orientation if and only if every vertex has an even degree.
        A special class of graphs that admit Eulerian orientations consists of $2\kappa$-regular graphs; that is, graphs in which all vertices have degree $2\kappa$, for some integer $\kappa \geq 1$.

        A fundamental problem regarding prescribed graph orientations is the determination of tight bounds on the number of valid orientations based on certain graph properties.
        Felsner et al.\ \cite{Felsner} provide several bounds on the orientations of planar graphs, while other works \cite{bev,bet} utilize graph orientations to derive upper bounds on the embeddings of rigid graphs.
        In the specific case of Eulerian orientations of $4$-regular graphs, the current best known bound is derived in \cite{LasVergnas} by M. Las Vergnas and is $9\cdot 2^{n-3}$ for $n\geq 4$, where $n$ denotes the number of vertices.
        This bound improved a more general one by Schrijver \cite{Schrijver1983}, which is valid for $n\geq 2$ and is exactly $6^{n/2}$.
        Note that while both Las Vergnas and Schrijver provided results for all $2\kappa$-regular graphs, we restrict our focus here to the $\kappa=2$ case.
        In a follow-up article \cite{LasVergnas2}, Las Vergnas also provided upper bounds for general $2\kappa$-regular graphs parameterized by the number of vertices and the number of pairwise edge-disjoint cycles.

        Beyond bounding the number of Eulerian orientations, several algorithms (both deterministic and randomized) have been proposed to count the Eulerian orientations of general graphs that are not necessarily regular (see, for example, \cite{MihWin}).
        Finally, the broader literature contains algorithms and bounds for the closely related problem of counting Eulerian tours \cite{Stefa, Punzi}.

        \medskip\textit{Our Contribution.} 
        In this work, we improve the bounds on the number of Eulerian orientations for certain classes of connected, loopless $4$-regular graphs.
        While Las Vergnas' bound is sharp for a specific family of $4$-regular multigraphs (that are separable), we prove that for \emph{biconnected multigraphs} the upper bound is strictly tightened to $2^n+2$. 
        This improves upon the bound in \cite{LasVergnas} and is also sharp for a specific family of biconnected multigraphs.
        Let us remark that the bound presented in \cite{Schrijver1983} is attained for a multigraph with $2$ vertices and $4$ parallel edges, which is also the case for our bound (the \emph{$4$-bond multigraph}, see Figure~\ref{fig:multi_b}).
        It is clear that as $n$ grows, Schrijver's bound becomes substantially looser than both Las Vergnas's bound and ours.

        We then derive a bound for simple biconnected $4$-regular graphs, which is considerably tighter than the multigraph bound and behaves asymptotically as $\mathcal{O}\left(3^{n/2}\right)$.
        Following this, we treat the case of simple connected separable $4$-regular graphs. 
        Here, we establish a comparatively weaker bound that behaves asymptotically as $\mathcal{O}\left(6^{n/3}\right)$.

        The following three theorems constitute the main contribution for this part of the paper, providing upper bounds for biconnected multigraphs,  biconnected simple graphs, and separable simple graphs — all of which are $4$-regular.

        \begin{theorem}\label{th:multi_b}
            The number of Eulerian orientations of any biconnected loopless $4$-regular graph with $n$ vertices is bounded by
            \begin{equation*}
                2^n + 2. 
            \end{equation*} 
            This bound is asymptotically $\mathcal{O}(2^n)$ and is attained by a specific family of graphs.
        \end{theorem}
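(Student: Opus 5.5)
The plan is to count Eulerian orientations by orienting the graph one vertex at a time along a well-chosen vertex ordering, as in the local steps drawn in the figures, and to bound the number of branches that survive. First, reversing every edge is a fixed-point-free involution on Eulerian orientations, so it suffices to fix the direction of one edge $e_0$ and prove that there are at most $2^{n-1}+1$ orientations extending it. Since $G$ is biconnected, I take an $st$-numbering $v_1=s,\dots,v_n=t$ with $e_0$ incident to $s$: every $v_i$ with $1<i<n$ has a neighbour earlier and a neighbour later in the ordering. The case where $G$ has two vertices is the $4$-bond, with $6=2^2+2$ orientations, and I take it as the base case.

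Let $k_i$ be the number of edges from $v_i$ to earlier vertices. When we reach $v_i$, these edges are already oriented, and we must choose directions for the remaining $4-k_i$ edges so that $v_i$ is balanced. The number of completions is:
\begin{itemize}
\item at most $3$ if $k_i=1$;
\item $2$ if $k_i=2$ and the two fixed edges point opposite ways, but only $1$ if they point the same way;
\item at most $1$ if $k_i=3$;
\item $1$ or $0$ at $t$, where $k_n=4$.
\end{itemize}
At $s$ only $e_0$ is fixed, giving $3$ choices. Counting each edge once from its later endpoint gives $\sum_i k_i = 2n$. Hence the middle vertices have average $k_i$ equal to $2$, and the number of vertices with $k_i=1$ equals the number with $k_i=3$.

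The naive product is therefore at most $3\cdot 3^{a}\cdot 2^{b}$ with $2a+b=n-2$, which is at most $3\cdot 2^{n-2}$. This is off by a constant factor, so the sharpening must come from correlations between the local choices. I would use two sources of slack:
\begin{itemize}
\item A vertex with $k_i=2$ yields only one branch whenever its two back-edges enter or leave together.
\item The final vertex $t$ is consistent only in a fraction of the branches.
\end{itemize}

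To make this rigorous, I would switch to an induction on $n$ via a splitting reduction. Pick a vertex $v$ whose two neighbours $x,y$ (or whose parallel class) allow a local replacement that keeps the graph biconnected and $4$-regular with $n-1$ vertices. Then show the recurrence
\[
\varepsilon(G)\le 2\,\varepsilon(G')-2 \quad\text{or}\quad \varepsilon(G)\le \varepsilon(G_1)+\varepsilon(G_2)-c.
\]
Either form propagates $2^{n-1}+2 \mapsto 2^n+2$. Concretely, an Eulerian orientation at $v$ pairs each in-edge with an out-edge, so it corresponds to one of the three transitions at $v$. Splitting $v$ along a transition yields a graph with one fewer vertex, and exactly one of the three transitions is never compatible with a given orientation, which is where the factor $2$ rather than $3$ comes from. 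Biconnectivity is needed to guarantee that at least one, and typically two, of the split graphs remain biconnected and loopless. The $-2$ correction comes from orientations counted twice in both splittings, for example the ones where $v$ is a ``transit'' in both pairings.

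The main obstacle is this inductive step. One must (i) show that in every biconnected $4$-regular multigraph with $n\ge 3$ some vertex admits a splitting whose relevant pieces stay biconnected and loopless, handling separately the case of doubled edges as in the path-of-double-edges figure, and (ii) control the overlap term exactly enough to obtain the additive $+2$ rather than a multiplicative constant. For (i) I would argue via ear decompositions or $2$-cuts. If every splitting at every vertex creates a cut vertex, then $G$ must be a cycle of doubled edges, i.e.\ $D_n$. That case I would count directly: each parallel pair is either ``both same direction'' (consistent only if all pairs go the same way around, giving $2$ orientations) or ``opposite'', and the opposite configurations contribute $2^n$ orientations. This gives exactly $2^n+2$, which simultaneously shows that the bound is attained.
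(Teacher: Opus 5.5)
Your direct count $|\mathcal{E}(\mathcal{D}_n)|=2^n+2$ is correct, and it gives the attainment part more transparently than the paper, which reads it off from equality in its elimination bound. The upper bound, however, is never proved.

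Your $st$-ordering plays the role of the paper's elimination order: it keeps the undirected remainder connected, which the paper obtains from Lemma~\ref{lem:bi3}. Your relation $2a+b=n-2$ is exactly the paper's tree relation $n=2n_3+n_2+2$. But with independent local counts the argument stops at $\tfrac32\cdot 2^n$, as you note. The step that closes this factor in the paper is a quantitative form of your ``correlation'' remark, Lemma~\ref{lem:path}. Over all branches one tracks two numbers: $\mathcal{B}$, the branches in which the next degree-$2$ vertex has its two fixed edges pointing opposite ways (cost $2$), and $\Gamma$, the branches in which it is trivial (cost $1$). After the initial degree-$4$ vertex, a neighbour joined to it by a double edge has $\mathcal{B}\le 4$ and $\Gamma\ge 2$ among the $6$ branches. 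Across each further double edge, $\mathcal{B}$ at most doubles while $\Gamma$ stays fixed; the simple-edge rule $(\mathcal{B},\Gamma)\mapsto(\mathcal{B}+\Gamma,\mathcal{B})$ is dominated by this because $\Gamma\le\mathcal{B}$. Hence a run of $\ell$ degree-$2$ vertices costs at most $2^{\ell+2}+2$ instead of $6\cdot 2^{\ell}$. With cost $3$ for your $k_i=1$ vertices one gets $3^{n_3}(2^{n-2n_3}+2)$, maximized at $n_3=0$. Your first route contains nothing that replaces this bookkeeping.

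The inductive route does not fill the gap either, although its recurrences are sound. Each Eulerian orientation of $G$ is compatible with exactly two of the three transitions at $v$. So for two of them, $T_1,T_2$, one has $|\mathcal{E}(G)|=|\mathcal{E}(G_1)|+|\mathcal{E}(G_2)|-c$, where $c$ counts the orientations in which both edges of each pair of the third transition point the same way at $v$. When $v$ has a double edge, the two transitions not pairing it give the same graph, which is your first form. Your point (ii) is in fact easy. Splitting $v$ along the third transition yields a connected graph with all degrees even, hence bridgeless, so it has two edge-disjoint paths between the two new vertices. Orienting these, completing with Euler circuits, merging back, and then reversing gives $c\ge 2$.

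The whole weight therefore lies on (i), which you leave open, and your one concrete claim about it is false.
\begin{itemize}
\item In $\mathcal{D}_n$ (for $n\ge 3$), two of the three splittings at every vertex give $\mathcal{D}_{n-1}$, which is biconnected. So $\mathcal{D}_n$ is not exceptional.
\item Conversely, take a ring of $3$-bond pairs: pairs $v_i,x_i$ joined by a triple edge, with $x_i$ joined to $v_{i+1}$ by a single edge. Every transition at every vertex pairs two parallel edges and creates a loop, so no admissible splitting exists anywhere.
\end{itemize}
These ring graphs need a different reduction. Contracting a $3$-bond pair to an edge divides the count by exactly $3$, as in the paper's theorem on $\mathcal{M}_2$, and $3(2^{n-2}+2)\le 2^n+2$ for $n\ge 4$. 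For all remaining graphs you would still have to show that some vertex has two transitions whose splittings stay biconnected and loopless. One configuration where this fails locally is a vertex $v$ with a double edge to $x$ such that $\{v,x\}$ is a $2$-vertex cut. Until such a structural lemma is proved, the induction yields no bound.
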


        \begin{theorem}\label{th:bi}
            The number of Eulerian orientations of a biconnected $4$-regular simple graph with $n$ vertices is bounded by
            \begin{equation*} 
                2\cdot 3^{n/2}, 
            \end{equation*} 
            which corresponds to an asymptotic growth of $\mathcal{O}(1.7321^n)$.
        \end{theorem}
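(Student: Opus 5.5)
We prove the bound $2\cdot 3^{n/2}$ by a sequential orientation process, following the procedure suggested by the figures: orient the edges vertex by vertex along a carefully chosen ordering $u_0,u_1,\dots,u_{n-1}$ of $V$, and count the possible choices at each step.

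\textbf{Setup and counting rule.} Fix an edge $e$ at $u_0$ and its direction; by reversal symmetry this halves the count, so it suffices to show that the number of Eulerian orientations with $e$ fixed is at most $3^{n/2}$. At step $i$ we choose directions for all still-unoriented edges at $u_i$ so that $u_i$ has indegree $2$ and outdegree $2$. If $u_i$ has $k$ already-oriented edges, the number of completions is $\binom{4-k}{2-j}$, where $j$ is the number of those $k$ edges pointing out of $u_i$. This quantity is at most $3$ when $k=1$, at most $2$ when $k=2$, at most $1$ when $k=3$, and exactly $1$ (or $0$) when $k=4$. At $u_0$, with $e$ fixed, we have $k=1$, giving $3$ choices.

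\textbf{Choice of ordering.} Choose the ordering greedily, as in a BFS/DFS: always pick next an unprocessed vertex with the maximum number of already-oriented incident edges. Because $G$ is connected, every vertex after $u_0$ has $k\ge 1$. The last vertex has $k=4$ and contributes a factor $1$. Moreover, since $G$ is simple, the Handshake count gives $\sum_i k_i = |E| = 2n$, where $k_i$ is the number of edges at $u_i$ oriented before step $i$. The contribution at step $i$ is $f(k_i)$ with $f(1)=3$, $f(2)=2$, and $f(3)=f(4)=1$.

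\textbf{Optimisation.} We need $\prod_i f(k_i)\le 3^{n/2}$ subject to $\sum k_i=2n$ and $k_i\ge 1$. Note that $\log f(k)$ is bounded above by the linear function $\tfrac{\log 3}{2}(k'-\cdot)$. Concretely, compare each $k$ against average $2$: pairing a $1$ with a $3$ yields $3\cdot 1=3$, while two $2$'s yield $4>3$. So the naive linear bound fails, since all $k_i=2$ would give $2^n$, which is larger than $3^{n/2}\approx 1.732^n$.

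\textbf{Main obstacle.} The pure degree counting is therefore insufficient, and structure must enter here. A vertex with $k=2$ whose two oriented edges point the same way gives only $1$ completion, not $2$; only the mixed case gives $2$. The plan is to process vertices in adjacent pairs $(u_{2t},u_{2t+1})$, as the figures suggest, and show that each pair contributes at most $3$.

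When $u_{2t}$ has many free choices, simplicity and biconnectivity force $u_{2t+1}$ (a neighbour of $u_{2t}$ joined by exactly one edge) to receive enough oriented edges that the joint count is at most $3$. Biconnectivity is used to guarantee that such an ordering exists, avoiding cut vertices that would allow two independent factors of $3$. Simplicity rules out the double-edge case of the figure, which gives $2\times 2$.

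I would try to prove the pair lemma by case analysis on $k_{2t}\in\{1,2\}$ and on the orientation pattern, with the extra factor $2$ absorbing the first vertex and a possible odd leftover vertex. This step is where I expect most of the difficulty.
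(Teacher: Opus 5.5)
Your framework is the paper's elimination process in different notation. Fixing the direction of $e$ reproduces the first-step cost $6=2\cdot 3$, and your $k_i$ is $4-d(u_i)$. Your edge count, which should read $\sum_i k_i=2n+1$ because $e$ is counted at both endpoints, is the tree relation $n=2n_3+n_2+2$ of Equation~\eqref{eq:tree_multi}. That identity already balances the cost-$3$ steps ($k=1$) against the cost-$1$ steps ($k\ge 3$) \emph{globally}. If every $k_i\in\{1,2,3\}$ except a single final $k=4$, then exactly $(n-n_2)/2$ steps have $k=1$, where $n_2$ is the number of $k=2$ steps.

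The genuine gap is the step you defer, and the pair lemma you propose for it is false:
\begin{itemize}
\item After $u_0$ is processed, every neighbour of $u_0$ has exactly one oriented edge (by simplicity). So an adjacent $u_1$ has $3$ completions in each of the $3$ branches, and the pair $(u_0,u_1)$ gives $9$.
\item In a triangle-free graph the same holds for $u_1$ and any further neighbour $u_2$, so shifting the pairing does not help.
\item Even a $k=1$ step followed by an adjacent $k=2$ step can give $1+2+2=5$ branches.
\end{itemize}
A factor $3$ is compensated by cost-$1$ steps that typically occur much later, when the undirected remainder has become a tree. So no local pairing of consecutive neighbours can work.

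The argument actually needs two further ingredients.

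First, you need an order in which no vertex other than the last reaches $k=4$, i.e.\ the undirected subgraph stays connected until it is a tree. Each extra such vertex wastes a factor $\sqrt3$ in the linear count, and nothing in your greedy max-$k$ rule prevents it. This is where biconnectivity enters: Lemma~\ref{lem:bi3} and Corollary~\ref{cor:bi} guarantee, at every step after the first, a non-cut vertex of the undirected subgraph with at most three unoriented edges.

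Second, you need an amortised bound for the $k=2$ steps only. These come in runs $u_1,\dots,u_\ell$ after a leading vertex, and a $k=2$ vertex has $2$ completions only in branches where its two oriented edges point in opposite directions. Let $\mathcal B$ count those branches and $\Gamma$ count the forced ones. The worst case is $\mathcal B_{i+1}=\mathcal B_i+\Gamma_i$, $\Gamma_{i+1}=\mathcal B_i$, which bounds the total cost of the run by $3\cdot(5/3)^\ell$ (Lemma~\ref{lem:path}). Since $5/3<\sqrt3$, combining this with the linear identity yields $2\cdot 3^{(n-n_2)/2}(5/3)^{n_2}=2\cdot 3^{n/2}\,(5/3^{3/2})^{n_2}\le 2\cdot 3^{n/2}$.

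This amortisation bounds the total number of branches, not a per-step maximum. It therefore cannot be expressed as a product of factors $f(k_i)$ as in your set-up.
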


        \begin{theorem}\label{th:sep}
            The number of Eulerian orientations of any separable, simple, $4$-regular graph is bounded by
            \begin{equation*}
                6^{(n+1)/3}, 
            \end{equation*} 
            which behaves asymptotically as $\mathcal{O} \left(1.8172^{n} \right)$.
        \end{theorem}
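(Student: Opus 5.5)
We would prove Theorem~\ref{th:sep} by decomposing along the block-cut tree and bounding each block separately, using Theorem~\ref{th:bi} for the biconnected pieces.

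\textbf{Step 1: structure at a cut vertex.} Let $G$ be a connected, separable, simple $4$-regular graph. Each vertex has even degree, so a parity count shows that every edge cut has even size. Consequently no bridge exists, and a cut vertex $v$ has exactly two edges into each of two branches.

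Every Eulerian orientation of $G$ must then orient one of the two edges entering each branch inward and the other outward. This holds because the edge cut around each branch has to be balanced.

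\textbf{Step 2: replace each branch by a block with a virtual edge.} Consider a leaf block $B$ attached at $v$, where $v$ has degree $2$ in $B$. Form $B'$ from $B$ by adding a vertex $w$ adjacent to $v$ twice, or equivalently by suppressing $v$ and joining its two $B$-neighbours $x,y$ by a virtual edge $xy$. Eulerian orientations of the branch restricted at $v$ then correspond to orientations of $B-v+xy$ with $xy$ oriented in a prescribed direction.

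By edge-reversal symmetry, each direction of the virtual edge gives exactly half of the Eulerian orientations of $B^\ast=B-v+xy$. Hence we obtain the product formula
\[
\mathrm{EO}(G)\;\le\;\prod_{\text{blocks }B} \tfrac12\,\mathrm{EO}(B^\ast)\cdot c,
\]
where $c$ accounts for the two compatible gluings at each cut vertex. The formula should hold with equality once we write $\mathrm{EO}(G)=\prod_i \mathrm{EO}(B_i^\ast)/2^{k}\cdot 2^{k}$-type corrections, with $k$ the number of cut vertices. We will pin this down carefully, since the half/double factors need to cancel exactly. This step is essentially the divide-and-conquer identity the paper later uses algorithmically.

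\textbf{Step 3: bound each piece.} Each $B^\ast$ is a $4$-regular biconnected graph. It may, however, contain a parallel edge, since $x$ and $y$ may already be adjacent. The cut-vertex-only pieces (blocks consisting of cut vertices joined cyclically) contribute factors that we count directly.

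For the simple case, Theorem~\ref{th:bi} gives at most $2\cdot 3^{n_B/2}$ orientations. For the pieces that are not simple, we need a modest variant of that bound.

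Every block of a simple $4$-regular graph that contains a cut vertex has at least $5$ vertices, because $K_5$ minus an edge is the smallest such block. The intended accounting is therefore to charge each cut vertex to its adjacent leaf blocks. One should then verify that the per-vertex growth rate is at most $6^{1/3}$, with the extremal configuration being a triangle of cut vertices hung with $K_5$-type leaves, which are worth $6$ per $3$ vertices. The exponent $(n+1)/3$ absorbs the additive constants, namely the factor $2$ in Theorem~\ref{th:bi} and the rounding at the root block.

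\textbf{Step 4: induction.} We would run the argument as an induction on the number of blocks: remove a leaf block and apply the inequality $\mathrm{EO}(G)\le \tfrac12\mathrm{EO}(B^\ast)\cdot \mathrm{EO}(G')$, with $G'$ the remainder after substituting a virtual edge.

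\textbf{Main obstacle.} The hard step is the optimisation in Step 3. We must show that the ratio $\tfrac12\mathrm{EO}(B^\ast)$ per removed vertex never exceeds $6^{1/3}$, including for small blocks where $2\cdot 3^{n_B/2}$ is too weak. For these blocks we would check the small cases directly, namely the leaf blocks on $5$ and $6$ vertices depicted as $K_5^1$ and $G_6^1$ in the figure, and fall back on Theorem~\ref{th:bi} only for larger blocks, where $3^{1/2}<6^{1/3}$ leaves slack.
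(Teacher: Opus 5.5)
There is a genuine gap in Steps 3--4: the pieces your decomposition produces are not simple, and for multigraphs the inequality you want to carry through the induction is false. When you suppress a cut vertex whose two remaining neighbours are already adjacent, the virtual edge is parallel to a real one, so $B^\ast$ or the remainder $G'$ is a multigraph.

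Here is a concrete instance. Take a simple cycle $u_1\cdots u_m$, add a cut vertex $c_i$ adjacent to $u_i$ and $u_{i+1}$ (indices mod $m$), and hang a $K_5$-minus-an-edge leaf on each $c_i$. This $G$ is simple, separable and $4$-regular with $n=7m$. Stripping the $m$ leaves as in your Step 4 leaves exactly the double cycle $\mathcal{D}_m$, which has $2^m+2$ orientations. Already $|\mathcal{E}(\mathcal{D}_6)|=66>6^{7/3}\approx 65.4$. More generally, the only multigraph bound available, Theorem~\ref{th:multi_b}, has growth rate $2>6^{1/3}$.

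So no "modest variant" of Theorem~\ref{th:bi} can bound each multigraph piece by $6^{(m+1)/3}$, and an induction with hypothesis $|\mathcal{E}|\le 6^{(n+1)/3}$ cannot close. To rescue the route you would need two things:
\begin{itemize}
\item a new bound for biconnected multigraph representatives that depends on the number of parallel (virtual) edges;
\item a strengthened induction hypothesis that transfers the slack of each removed leaf to the virtual edge it creates. For example, a $K_5$ leaf together with its cut vertex gives $24$ against a budget of $6^{2}=36$.
\end{itemize}
The paper itself names this mix of simple and multigraph regular representatives as the unresolved difficulty in its conclusion.

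Your gluing constant is also wrong. By Lemma~\ref{lem:cut}, Eulerian orientations of $G$ correspond bijectively to pairs of Eulerian orientations of $B^\ast$ and $G'$. Hence $|\mathcal{E}(G)|=|\mathcal{E}(B^\ast)|\cdot|\mathcal{E}(G')|$ with no factor $\tfrac12$; compare Theorem~\ref{th:formula}. Your Step 4 inequality is therefore false: for $H_{11}$ it would assert $576\le 288$. Dropping the spurious $\tfrac12$ only tightens the accounting you need.

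Your heuristic about the extremal configuration is also off. A triangle of cut vertices carrying three $K_5$ leaves has $2\cdot 24^3$ orientations on $18$ vertices, a rate of about $1.765$. The bound $6^{(n+1)/3}$ is not attained by any structure "worth $6$ per $3$ vertices".

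The paper avoids all of this. It runs the elimination process directly on the simple graph $G$, allowing $n_4$ degree-$4$ eliminations wherever connectivity forces them. The tree condition gives $n=3n_4+2n_3+n_2-1$. Then $6^{n_4}3^{n_3}(5/3)^{n_2}\le 6^{(n+1)/3}$, because $3<6^{2/3}$ and $5/3<6^{1/3}$. Since the process never leaves the original simple graph, the simple-graph path bound $5/3$ of Lemma~\ref{lem:path} applies and no multigraph pieces ever appear.
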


        Furthermore, we compare our theoretical bounds for simple graphs with the exact number of Eulerian orientations, which we determined via exhaustive computation for all simple $4$-regular graphs with $n \leq 15$ vertices.

        In the sequel, we provide closed formulas on the number of Eulerian orientations for separable graphs, subject to structural conditions and, where applicable, the number of constrained orientations of specific subgraphs.
        We show that this approach yields a divide-and-conquer algorithm that takes advantage of the structural properties of separable $4$-regular graphs avoiding exhaustive enumeration.
        Additionally, this approach allows us to exhibit a family of separable simple graphs that strictly possesses a greater number of orientations than biconnected simple graphs.

        Finally, for a specific set of inductive construction operations \cite{Bories1983, DING2003329}, we provide closed formulas detailing their effect on the total number of Eulerian orientations.

        \medskip\textit{Organization.} 
        The rest of the paper is organized as follows. 
        In Section~\ref{sec:pre}, we establish our notation and present techniques - similar to those introduced in \cite{bev, bet} - to derive bounds on outdegree-constrained graph orientations, adapting them for multigraphs.
        Then, we provide definitions and properties regarding the structure of separable graphs that will be utilized in the subsequent sections.
        In Section~\ref{sec:bound}, we adapt the techniques presented in the previous section to prove our main theorems.
        We also demonstrate that the derived bound for multigraphs is sharp for a specific family of graphs.
        Additionally, we present the maximum number of orientations obtained via exhaustive computation for all simple $4$-regular graphs up to $n=15$ vertices.
        In Section~\ref{sec:ARS_count}, we categorize the separable graphs based on the local structure of their cut vertices and provide a divide-and-conquer algorithm for computing the exact number of their Eulerian orientations.
        Using this method, we construct a class of simple separable graphs whose number of orientations strictly exceeds the upper bound established for biconnected simple graphs; based on this family and our computational results, we conjecture an even tighter upper bound. 
        In Section~\ref{sec:constr}, we prove that specific inductive construction operations increase the number of Eulerian orientations by a deterministic multiplicative factor.
        Finally, in Section~\ref{sec:concl}, we summarize our results and discuss potential directions for future research.

    \section{Notation and Preliminaries}\label{sec:pre}

        Throughout this paper, we consider \emph{loopless} and \emph{undirected} graphs that may contain \emph{parallel} edges.
        Two or more edges are considered parallel if they share the same vertices as endpoints. 
        An edge that is not parallel to any other is a \emph{simple edge}.
        A graph containing parallel edges is called a \emph{multigraph}; otherwise, it is \emph{simple}.

        In this context, given any undirected graph $G=(V,E)$ - whether it is a simple graph or a multigraph -  we treat the edge set $E$ as a \emph{multiset}. 
        The \emph{multiplicity} of an edge $e \in E$ is the number of times it appears in the multiset (i.e., the number of edges sharing the exact same endpoints as $e$).

        Regarding vertex connectivity, for any vertex $u \in V$, $d(u)$ denotes the degree of $u$ in $G$. 
        Furthermore, for any subgraph $G^\prime \subseteq G$, $d_{G^\prime}(u)$ denotes the degree of $u$ strictly within the subgraph $G^\prime$.
        
        Given a subset of vertices $V^\prime \subseteq V$, we denote by $G\setminus V^{\prime}$ the graph obtained by removing the vertices of  $V^{\prime}$ and every edge that has some vertex of $V^{\prime}$ as an endpoint.
        If $V^{\prime}=\{v\}$ for a vertex $v\in V$, we denote this graph by $G\setminus v$ instead of $G\setminus \{v\}$.
        By $G[V^{\prime}]$, we denote the \emph{induced subgraph} $G\setminus (V\setminus V^{\prime})$ i.e., the graph where every vertex not belonging to $V^{\prime}$ is removed.
        A vertex $v \in V$ is a \emph{cut vertex} if its removal increases the number of connected components of $G$.
        Similarly, an edge is a \emph{bridge} if its removal increases the number of connected components of $G$.
        A connected graph $G$ is \emph{biconnected} if it has no cut vertices; otherwise $G$ is called \emph{separable}.
        A \emph{biconnected component}  of a separable graph $G$ is a maximal subgraph $G_b$ of $G$ such that $G_b$ is biconnected.
        
        An \emph{orientation} of $G$ is a directed graph $\vec{G}=(V,\vec{E})$ obtained by assigning a unique direction to each edge $e \in E$.
        An orientation is called an \emph{Eulerian orientation} if, for every vertex $v \in V$, the indegree equals the outdegree.  
        It is well-known that an undirected graph admits an Eulerian orientation if and only if every vertex has even degree; such graphs are called \emph{Eulerian graphs}.
        We denote by $\mathcal{E}(G)$ the set of Eulerian orientations of $G$. Consequently, its cardinality $|\mathcal{E}(G)|$ represents the number of Eulerian orientations of $G$.
        
        The \emph{Handshaking Lemma} states that for any loopless graph, the sum of degrees of all vertices is $2|E|$, which implies that in every graph the number of odd-degree vertices is even.
        A direct corollary of this is that every Eulerian graph is bridgeless, since the removal of a bridge would create connected components with an odd number of vertices of odd degree.
        
        In what follows in this section, we first present an elimination procedure analogous to the one introduced in \cite{bev,bet}.
        This procedure was originally developed to bound the number of outdegree-constrained orientations of a certain class of simple undirected graphs.
        Here, instead of introducing new graph structures (as was done in \cite{bev}), we use \emph{mixed graphs} to align with standard graph-theoretic terminology.
        We provide several definitions related to this procedure that will be used throughout the remainder of this paper.
        We also detail the modifications needed to include the orientation of parallel edges.
        
        Finally, we discuss properties regarding the orientations of specific subgraphs within a $4$-regular graph, recall the definition of the block-cut tree as presented in \cite{Harary69}, and analyze the constrained orientations of edges incident to cut vertices.

    \subsection{Elimination process}\label{sec:elim}
        Our goal is to bound the number of Eulerian orientations for all $4$-regular graphs with $n$ vertices.
        To this end, we apply an iterative process called the \emph{elimination process}.
        At each step of this process, we select a specific vertex (or a path of vertices) and consider all ways to orient its incident undirected edges to satisfy the prescribed outdegree constraints, thereby \emph{eliminated} the corresponding undirected edges and vertices.
        By multiplying the number of valid local orientations at each step, we can build a worst-case upper bound for the total number of Eulerian orientations.
        The process naturally terminates once the remaining undirected edges satisfy a specific stopping condition, as established in \cite{bev}.
        
        Initially, all edges are undirected.
        However, the first elimination step introduces directed edges, meaning all subsequent steps must operate on a hybrid structure. 
        We therefore recall a standard definition for this type of graph structure:

        \begin{definition}
            A \emph{mixed graph} $J=(V, E, D)$ consists of a set of vertices $V$, a multiset of undirected edges $E$, and a multiset of directed edges $D$.
        \end{definition}

        Let $V_J$ be the set of vertices in a mixed graph $J$ whose incident edges are not yet fully oriented. 
        We call the graph $G_J=(V_J,E)$ the \emph{undirected subgraph} of $J$.
        For a vertex $v \in V_J$, with degree $d(v)$ in $G_J$, we denote by $p(v)$ the number of its incident edges already directed outwards in $J$.

        The ordered pair $(d(v),p(v))$ is the \emph{profile} of $v$.  
        Then, the number of ways to orient the undirected incident edges of $v$ is denoted by $C(d(v),p(v))$ and is defined as the \emph{cost} of this elimination step for this vertex.
        This is given exactly by:

        \begin{equation}\label{eq:cost}
            C(d(v),p(v))=\binom{d(v)}{2-p(v)}.
        \end{equation}

        \begin{figure}[!htbp]
            \centering
            \singlel
            \caption{
            Elimination of vertex $v_1$ with profile $(3,1)$. 
            The cost of this step is $C(3,1)= \binom{3}{2-1} = 3$. 
            If $v_2$ is subsequently eliminated, the cumulative cost across both steps becomes $3\cdot 3=9$, since $v_2$ will have a profile of either $(3,0)$ or $(3,1)$, and in both cases the cost is $C(3,0)= C(3,1)= C^*(3)=3$.
            }\label{fig:singlel}
        \end{figure}

        Notice that in each step, a set of new mixed graphs is generated.
        In each resulting undirected subgraph $G_{J^\prime}$, there is exactly one fewer vertex and $d(v)$ fewer undirected edges. 
        Furthermore, all neighbors of $v$ may have different profiles in these resulting mixed graphs.
        To ensure the final orientation can be Eulerian, every eliminated vertex must attain an outdegree of exactly $2$; intermediate graphs satisfying this condition are termed \emph{valid mixed graphs}. 
        When the profile $(d(v), p(v))$ is $(1, 1)$ or $(2, 0)$, or when $p(v) = 2$, Equation~\eqref{eq:cost} yields a cost of $1$.
        In the elimination process, these are termed \emph{trivial vertices}.

        To determine upper bounds on the number of orientations, the \emph{maximizing cost} was utilized in \cite{bet,bev}, defined as 
        \[ C^*(d(v))= \max_{q\in \{0,1,2\}}  C(d(v), q). \]
        We observe that the total cost of iterative elimination steps is inherently multiplicative, in the sense that the elimination process can be seen as a branching process. 
        Eliminating a vertex $v$ branches a valid mixed graph $J$ into exactly $C(d(v), p(v))$ new valid mixed graphs. 
        Because this specific branching factor depends on previously oriented edges, we apply the maximizing cost $C^*(d(v))$ as a strict upper bound  to every branch.
        For example, eliminating $v_1$ yields at most $C^*(d(v_1))$ valid branches. 
        In the subsequent step, eliminating $v_2$ splits each of those resulting branches into at most $C^*(d(v_2))$ new graphs. 
        Therefore, the total number of valid mixed graphs generated after both steps is strictly bounded from above by the product $C^*(d(v_1)) \cdot C^*(d(v_2))$. 
        By applying this branching logic across the entire sequence, the global upper bound for the elimination process is obtained simply by multiplying the maximum costs of all individual steps (see Figure~\ref{fig:singlel}).

        As noted in \cite{bev}, certain profiles can significantly inflate the orientation bounds in the worst-case scenario due to the multiplicative nature of the costs.
        In the case of orientations with a prescribed outdegree of $2$, as in the present paper, this inflation is specifically driven by vertices possessing the profile $(2,1)$. 
        However, as shown in \cite{bet}, these degree-$2$ vertices are generated during the elimination process alongside vertices with lower costs in the other branches.
        Thus, one can consider the total cost for a \emph{path of vertices} of a uniform degree rather than the maximizing cost for a single vertex.

        Following the approach in \cite{bet}, we consider paths $\mathcal{P}=\{u_0, u_1, \dots, u_\ell\}$ where the \emph{leading vertex} $u_0$ is eliminated with a degree other than $2$, while each of the subsequent $\ell$ vertices has degree $2$ at the time of its elimination.
        Let $\mathcal{C}(\mathcal{P}_{u_i})$ denote the accumulated cost after the elimination of the vertices $\{u_0, u_1, \ldots,u_i\}$.
        Consequently, the \emph{total cost} of eliminating all the vertices of the path $\mathcal{P}=\{u_0, u_1, \dots, u_\ell\}$ is $\mathcal{C}(\mathcal{P}_{u_{\ell}})$.

        By factoring out the cost of the leading vertex and using the multiplicative property, the \emph{average cost} per degree-$2$ vertex is given by: 
        \begin{equation}\label{eq:average}
            \displaystyle \left(\frac{\mathcal{C}(\mathcal{P}_{u_{\ell}})}{C(d(u_0),p(u_0))} \right)^{1/\ell}.
        \end{equation}

        We track the branching of the mixed graphs at step $i$ by defining two quantities: let $\mathcal{B}(u_i)$ be the number of valid mixed graphs generated where $u_i$ has profile $(2,1)$, and let $\Gamma(u_i)$ be the number of valid mixed graphs where $u_i$ has profile $(2,0)$ or $(2,2)$ (i.e., $u_i$ is a trivial vertex). 
        Since a $(2,1)$ profile yields a cost of $2$ and a trivial profile yields a cost of $1$, the accumulated cost at this step is exactly:
        $$\mathcal{C}(\mathcal{P}_{u_i})= 2\cdot \mathcal{B}(u_i)+\Gamma(u_i).  $$
        Thus, we need recursive formulas for these branching factors as in \cite{bet} to compute the worst-case scenario for the average cost of such paths.

        To do so, we must distinguish between the structural cases that cause a vertex to have degree $2$ during the elimination process. 
        A vertex $u_i$ in such a path has degree $2$ in the undirected subgraph $G_J$ if one of the following conditions is met prior to the elimination of $u_{i-1}$:
        \begin{itemize}
            \item[a.] $u_i$ has degree $4$ and there are two parallel edges between $u_{i-1}$ and $u_i$.
            \item[b.] $u_i$ has degree $3$ and there is a simple edge between $u_{i-1}$ and $u_i$.
        \end{itemize}

        \begin{figure}[tbh]
            \centering
            \Gpath
            \caption{
            Elimination of a path $\mathcal{P}=\{u_0,u_1,u_2\}$ connected by simple edges, demonstrating the elimination of $u_0$ and $u_1$. 
            The leading vertex $u_0$ is eliminated with a cost of $C(3,1)=3$. 
            In the valid mixed graphs generated by this initial elimination, we have $\mathcal{B}(u_1)=1$ and $\Gamma(u_1)=2$, corresponding to one vertex with profile $(2,1)$ and two trivial vertices. 
            Thus, the accumulated cost is $\mathcal{C}(\mathcal{P}_{u_1})=2\mathcal{B}(u_1)+\Gamma(u_1)=4$. 
            Subsequently, the elimination of $u_1$ yields $\mathcal{B}(u_2)=1$ and $\Gamma(u_2)=3$, resulting in a total path cost of $\mathcal{C}(\mathcal{P}_{u_2})=2\mathcal{B}(u_2)+\Gamma(u_2)=5$. 
            The average cost per degree-$2$ vertex is $(5/3)^{1/2}$, which is strictly lower than the worst-case bound for simple edges.
            } 
            \label{fig:pathel}
        \end{figure}

        The following lemma establishes a bound on the average cost of such paths for both structural cases.

        \begin{lemma}\label{lem:path}
            Let $\mathcal{P} = \{u_0, u_1, \dots, u_\ell\}$ be a path of $\ell+1$ vertices as described above, such that $d(u_0) \neq 2$. 
            If the path contains parallel edges, the average cost for the $\ell$ vertices eliminated with degree~$2$ is bounded by $\left(\displaystyle\frac{2^{\ell+1}+1}{3}\right)^{1/\ell}$. 
            If the graph is simple, this average cost is bounded by $5/3$.
        \end{lemma}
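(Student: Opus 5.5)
The plan is to compute the branching quantities $\mathcal{B}(u_i)$ and $\Gamma(u_i)$ exactly, by a recursion along the path. The key local fact is this. When $u_{i-1}$ is eliminated with degree $2$, its two undirected edges are either both the parallel pair to $u_i$ (case a) or one simple edge to $u_i$ plus one further edge (case b).

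A trivial profile of $u_{i-1}$, $(2,0)$ or $(2,2)$, forces the orientation of its edges. This produces a single branch.

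A $(2,1)$ profile produces two branches, one for each of its two admissible orientations. What these branches do to $u_i$ depends on the case.
\begin{itemize}
\item In case a, if $u_{i-1}$ is trivial, both parallel edges get the same direction, so $u_i$ receives $p=0$ or $p=2$ from them. If $u_{i-1}$ is $(2,1)$, each of its two branches gives $u_i$ exactly one outgoing edge.
\item In case b, only one edge reaches $u_i$. A $(2,1)$ vertex therefore sends one of its two branches towards raising $p(u_i)$ and the other towards leaving it unchanged.
\end{itemize}
From this I will derive linear recurrences for $(\mathcal{B}(u_i),\Gamma(u_i))$ and bound $\mathcal{C}(\mathcal{P}_{u_\ell})=2\mathcal{B}(u_\ell)+\Gamma(u_\ell)$.

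For the parallel case I first treat the worst case: $u_0$ has profile $(4,0)$ with cost $6$, and every $u_i$ ($1\le i\le \ell-1$) has no directed edges other than those coming from $u_{i-1}$. Among the $6$ orientations at $u_0$, the two parallel edges to $u_1$ are both out in $1$ of them and both in in $1$ of them. These two give trivial profiles of $u_1$. In the other $4$ orientations the pair is split, which gives profile $(2,1)$ for $u_1$. Hence $\mathcal{B}(u_1)=4$ and $\Gamma(u_1)=2$.

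By the local fact, trivial branches stay trivial, while each $(2,1)$ branch produces two $(2,1)$ branches for the next vertex. So $\mathcal{B}(u_i)=2\mathcal{B}(u_{i-1})$ and $\Gamma(u_i)=\Gamma(u_{i-1})$. This gives
\[
\mathcal{B}(u_\ell)=2^{\ell+1},\qquad \Gamma(u_\ell)=2,\qquad \mathcal{C}(\mathcal{P}_{u_\ell})=2^{\ell+2}+2 .
\]
Dividing by the cost $6$ of $u_0$ yields $(2^{\ell+1}+1)/3$, which is the claimed bound after taking the $\ell$-th root.

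It then remains to check that the other configurations only lower this ratio. These are: leading profiles $(4,1)$, $(3,q)$ or $(1,q)$; intermediate vertices carrying extra directed edges; and mixtures of simple and parallel links along the path. I plan to do this by showing that in each of them the fraction of $(2,1)$ branches created at $u_1$ is at most $4/6$. The recurrence is monotone in $\mathcal{B}$, so a smaller starting fraction keeps the ratio below the worst case.

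For simple graphs the same scheme applies with case b. Each $(2,1)$ branch splits into one branch where $u_i$ is $(2,1)$ and one where it is trivial, or vice versa, depending on its prior $p(u_i)$. Trivial branches map to a single branch. So $\mathcal{B}$ never grows, and $\Gamma$ grows by at most $\mathcal{B}$ per step.

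Starting from the leading vertex, I will tabulate $\mathcal{B}(u_1)/C(u_0)$ for all leading profiles; Figure~\ref{fig:pathel} shows $1/3$ for $(3,1)$. From there I will show that the average cost per degree-$2$ vertex is at most $5/3$. The maximum of $\bigl(\mathcal{C}(\mathcal{P}_{u_\ell})/C(u_0)\bigr)^{1/\ell}$ should be attained at small $\ell$, since the accumulated cost grows only linearly in $\ell$.

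The main obstacle is the case analysis over leading profiles and prior profiles of the $u_i$, together with mixed paths. I will handle it by proving a single monotonicity statement: a larger value of $\mathcal{B}$ at any step never decreases the final ratio. With that in hand, it suffices to exhibit the extremal starting configurations described above.
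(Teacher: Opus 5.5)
Your double-link computation is correct and matches the paper's $d(u_0)=4$ case: from $(\mathcal{B},\Gamma)=(4,2)$ the doubling recurrence gives $2^{\ell+2}+2$. The paper also treats $d(u_0)=3$, starting from $(2,1)$, with the same ratio. A profile $(4,1)$ cannot occur, so that case needs no check. There are two gaps.

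\textbf{Simple links.} You claim a trivial $u_{i-1}$ always leaves $u_i$ trivial, so that $\mathcal{B}$ never grows. This is false. If $u_i$ already carries one outgoing directed edge and $u_{i-1}$ has profile $(2,0)$, then $u_{i-1}$ directs its edge into $u_i$, and $u_i$ becomes $(2,1)$. Similarly, $p(u_i)=0$ together with a $(2,2)$ vertex $u_{i-1}$ gives $(2,1)$. The worst case is therefore $\mathcal{B}(u_{i+1})=\mathcal{B}(u_i)+\Gamma(u_i)$ and $\Gamma(u_{i+1})=\mathcal{B}(u_i)$, which is the recurrence the paper takes from \cite{bet}. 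This undermines two parts of your plan.
\begin{itemize}
\item In the simple case the accumulated cost grows like the Fibonacci numbers ($5,8,13,21,\dots$ from $(2,1)$), not linearly. So ``the maximum is at small $\ell$'' needs an actual estimate. For example, the per-step factor $(3\mathcal{B}+2\Gamma)/(2\mathcal{B}+\Gamma)$ is at most $5/3$ exactly when $\Gamma\le\mathcal{B}$.
\item For mixed paths, a double link sends $(\mathcal{B},\Gamma)$ to $(2\mathcal{B},\Gamma)$ and a simple link sends it to $(\mathcal{B}+\Gamma,\mathcal{B})$. Both produce the same number $2\mathcal{B}+\Gamma$ of branches. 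The double link yields more $(2,1)$ branches only if $\Gamma\le\mathcal{B}$.
\end{itemize}
Your monotonicity principle is sound: for a fixed continuation, the final cost is $a\mathcal{B}+b\Gamma$ with $a\ge b$, so it is nondecreasing in $\mathcal{B}$ at fixed branch count. But it only lets you pass to the extremal start; it does not compare step types. The missing ingredient is the invariant $\Gamma\le\mathcal{B}$. It holds at the extremal starts $(2,1)$ and $(4,2)$ and is preserved by both recurrences, so double links dominate. This is exactly the paper's comparison of $3\mathcal{B}+2\Gamma$ with $4\mathcal{B}+\Gamma$.

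\textbf{Degree-$1$ leaders.} The leading-profile tabulation fails for degree-$1$ leaders, which the hypothesis $d(u_0)\neq 2$ allows. With profile $(1,1)$ there is a single forced branch, directing the edge into $u_1$. If $u_1$ already has $p=1$, it becomes $(2,1)$, so the fraction of $(2,1)$ branches at $u_1$ is $1$, not at most $2/3$. Already for $\ell=1$ the average cost is $2>5/3$. If double links follow, the average stays $2$ per vertex, which exceeds $\bigl((2^{\ell+1}+1)/3\bigr)^{1/\ell}$ for every $\ell\ge 1$. So a degree-$1$ vertex cannot serve as a leading vertex at all. The paper discards it and re-anchors at the most recent vertex eliminated with degree $3$ or $4$. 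It then analyses the extended path $\mathcal{P}^*$, so the branching that forced the degree-$1$ orientation is counted there. Your plan needs this step, or an equivalent one, in place of checking the $(1,q)$ profiles.
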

        \begin{proof}
            The bound for the simple graph case is proven in \cite{bet} (see Figure~\ref{fig:pathel}). 
            Here, we investigate the cost when vertices possessing parallel edges (Case a) are eliminated.
            
             First, consider paths consisting strictly of vertices connected by double edges.
             If the leading vertex has degree~$3$, then $\mathcal{B}(u_1)\leq 2$ and $\Gamma(u_1)\geq 1$, since in at least one orientation the edge $(u_0,u_1)$ is directed towards $u_1$. 
             Analogously, if the leading vertex has degree~$4$, then $\mathcal{B}(u_1)\leq 4$ and $\Gamma(u_1)\geq 2$.
             
             Assuming the worst-case scenario for $d(u_0)=3$, we initially have $\mathcal{B}(u_1)= 2$ and $\Gamma(u_1)= 1$. 
             Thus, the accumulated cost of the path up to this step is $\mathcal{C}(\mathcal{P}_{u_1})= 2\cdot 2 + 1 = 5$. 
             After the elimination of $u_1$, if $d(u_2)=2$, then $d(u_2)=4$ before the elimination of $u_1$, due to the double edge.
             This means that no edge incident to $u_2$ in the mixed graph $J$ is already directed in the previous steps of the elimination process (or else $d(u_2)>4$ in $J$), so $\mathcal{B}(u_2)= 4$ and $\Gamma(u_2)= 1$, and the accumulated cost is 
            $\mathcal{C}(\mathcal{P}_{u_2})=9$.
            
            Similarly, if a double edge is eliminated at every subsequent step of the path, we obtain the following recurrences:
            \begin{align}\label{eq:rec}
                \begin{split}
                    \mathcal{B}(u_{i+1})&=2\cdot \mathcal{B}(u_{i})\\
                    \Gamma(u_{i+1})&=\Gamma(u_{i})
                \end{split}
            \end{align}
            This recurrence dictates that at each step, the number of branches with cost~$2$ doubles, while the number of trivial branches remains constant. For a path of length $\ell$ originating from $d(u_0)=3$, this resolves to:
            \begin{align}\label{eq:cost_path}
                \begin{split}
                \mathcal{B}(u_{\ell}) &= 2^\ell\\
                \Gamma(u_{\ell}) &= 1\\
                \mathcal{C}(\mathcal{P}_{u_{\ell}}) &= 2^{\ell+1}+1
                \end{split}
            \end{align}
            
            In the case that $d(u_0)=4$, applying Equation~\eqref{eq:rec} yields:
            \begin{equation}\label{eq:cost_path4}
                \mathcal{C}(\mathcal{P}_{u_{\ell}})=2^{\ell+2}+2.
            \end{equation} 
            This concludes the case where only double edges are eliminated (see Figure~\ref{fig:pathleld} for an example).
            
            Let us now treat the case where both double and single edges exist in the path.
            Let $u_{i+1}$ be a vertex that attains degree~$2$ upon elimination, connected to the previously eliminated vertex $u_i$ by a single edge.
            Then according to \cite{bet}, the following equations hold in the scenario that maximizes the total cost:
            \begin{align}\label{eq:rec2}
                \begin{split}
                    \mathcal{B}(u_{i+1})&=  \mathcal{B}(u_{i}) + \Gamma(u_i)\\
                    \Gamma(u_{i+1})&= \mathcal{B}(u_{i}).
                \end{split}
            \end{align}
            
            Thus the subsequent cost is $\mathcal{C}(\mathcal{P}_{u_{i+1}})=3 \cdot \mathcal{B}(u_{i})+2\cdot \Gamma(u_i) $.
            
            This should be compared with $4 \cdot \mathcal{B}(u_{i})+ \Gamma(u_i)$, which is the cost derived by the recursive Equation~\eqref{eq:rec}.
            If $\Gamma(u_{i+1})\leq \mathcal{B}(u_{i+1})$, then Equations~\eqref{eq:cost_path} and \eqref{eq:cost_path4}—derived for paths with purely double edges—clearly bound the total cost for every possible path composition.
            A careful look at Equations~\eqref{eq:rec} and~\eqref{eq:rec2} confirms that this inequality holds universally. 
            A simple substitution into Equation~\eqref{eq:average} for both cases of leading vertices concludes the proof for $d(u_0)=3$ or $4$.
            
            Finally, if $d(u_0) = 1$, this vertex is excluded due to its trivial cost, and we instead consider the most recent vertex $v_0$ in the elimination sequence with $d(v_0) \notin \{1, 2\}$.
            Possibly, there are other vertices $v_i$ eliminated with degree~$2$ before $u_1$ in that case, or other degree-$1$ vertices that will also be excluded.
            We then apply a similar elimination procedure as described above for the extended path $\mathcal{P}^*=\{v_0, \dots, u_1, \dots, u_\ell\}$; the total cost remains unchanged for the subset of vertices eliminated with degree $\geq 2$.
        \end{proof}

        \begin{figure}[tbh]
            \centering
            \GDoubleEdge
            \caption{
            Elimination of a path $\mathcal{P}=\{u_0,u_1,u_2\}$ connected by double edges, demonstrating the elimination of $u_0$ and $u_1$. 
            The bold numbers on the arrows indicate the multiplicity of each resulting branch. 
            The leading vertex $u_0$ is eliminated with a cost of $C(3,0)=3$. 
            In the mixed graphs generated by the elimination of $u_0$, we have $\mathcal{B}(u_1)=2$ and $\Gamma(u_1)=1$, corresponding to two branches where $u_1$ has profile $(2,1)$ and one where it is trivial. 
            Thus, the accumulated cost is $\mathcal{C}(\mathcal{P}_{u_1})=2\mathcal{B}(u_1)+\Gamma(u_1)=5$. 
            Following the multiplicative principle (as each corresponding arrow is labeled by $2$), the subsequent elimination of $u_1$ yields $\mathcal{B}(u_2)=4$ and $\Gamma(u_2)=1$. 
            This results in a total path cost of $\mathcal{C}(\mathcal{P}_{u_2})=2\mathcal{B}(u_2)+\Gamma(u_2)=9$. 
            The average cost per degree-$2$ vertex attains the maximum upper bound presented in Lemma~\ref{lem:path} for parallel edges.
            }
            \label{fig:pathleld}
        \end{figure}

        The previous lemma establishes that the total cost for a path of $\ell$ vertices is bounded by $(2^{\ell+1}+1)/3$ in multigraphs, and by $(5/3)^\ell$ in simple graphs.
     
        Finally, this iterative process terminates under the following stopping condition, originally proven in \cite{bev}:
        \begin{lemma}\label{lem:tree}
            Let $G_T=(V_T,E_T)$ be a simple undirected tree.  
            Then, for any prescribed outdegree constraints, there is at most one valid orientation satisfying those constraints.
        \end{lemma}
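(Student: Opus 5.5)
The natural approach is induction on the number of vertices of the tree $G_T$, peeling off leaves. The key observation is that a leaf forces the orientation of its unique incident edge. The prescribed outdegree constraint assigns each vertex $v\in V_T$ a number $p^\ast(v)$, its required outdegree within $G_T$ (equivalently, the residual number of outgoing edges still needed after accounting for edges already directed in the mixed graph). The claim is that the set of orientations of $E_T$ meeting all these constraints has size at most one.

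The base case is a tree with a single vertex, or with no edges. There is exactly one orientation, the empty one, and it is valid or not depending on whether the constraints equal zero. Either way, at most one orientation is valid.

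For the inductive step, assume $|V_T|\ge 2$. I would pick a leaf $w$ of $G_T$, which exists because every finite tree with at least two vertices has one. Let $e=\{w,x\}$ be its unique incident edge. Since $d_{G_T}(w)=1$, the constraint at $w$ determines the direction of $e$:
\begin{itemize}
\item if $p^\ast(w)=1$, then $e$ must be directed $w\to x$;
\item if $p^\ast(w)=0$, then $e$ must be directed $x\to w$;
\item if $p^\ast(w)\notin\{0,1\}$, there is no valid orientation and we are done.
\end{itemize}
In the first two cases every valid orientation of $G_T$ agrees on $e$.

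Having fixed $e$, I remove $w$ and update the constraint at $x$. It decreases by one if $e$ was directed out of $x$, and is unchanged otherwise. The graph $G_T\setminus w$ is again a tree on fewer vertices, so by induction it admits at most one orientation satisfying the updated constraints. Every valid orientation of $G_T$ restricts to a valid orientation of $G_T\setminus w$ under the updated constraints, and it is determined by that restriction together with the forced direction of $e$. Hence the restriction map is injective and the bound of at most one follows.

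The only point needing care is that $G_T$ is simple. This guarantees that a leaf has exactly one incident edge, which is what makes its direction forced. If there were parallel edges, a vertex of degree $2$ joined to a single neighbour would have two orientation choices for $p^\ast=1$. This is exactly why the lemma is stated for simple trees, so I expect no real obstacle beyond making this dependence explicit.
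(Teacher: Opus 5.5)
Your leaf-peeling induction is correct and complete. The direction of the leaf edge $e$ is the same in every valid orientation, so the updated constraint at $x$ is well defined and restriction to $G_T\setminus w$ is injective.

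The paper gives no proof of this lemma; it only defers to \cite{bev}. There is therefore no in-paper argument to compare against, and yours is the standard one.

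One small correction to your closing remark: the hypothesis doing the work is acyclicity, not simplicity as a separate assumption. Two parallel edges already form a cycle of length $2$, so a connected acyclic multigraph never contains them. In the paper's usage this is a connected undirected subgraph with $|V_T|=|E_T|+1$. Your double-edge example is exactly such a $2$-cycle, so it is not a tree.

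An equally short alternative proof makes this dependence explicit. Suppose two orientations satisfy the same outdegree constraints, and direct the edges on which they differ as in the first orientation. These edges then have equal in- and outdegree at every vertex, so if any exist they contain a cycle, which is impossible in a tree.
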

        
        We refer to this property as the \emph{tree condition}, which means that the step at which the undirected subgraph becomes a tree marks the stopping point of the elimination process.  
        Following \cite{bev,bet}, the undirected subgraph must remain connected at each step of the elimination process.  
        Therefore, if $G_T=(V_T,E_T)$ is the undirected subgraph of the mixed graph obtained at the end of the process, the equality $|V_T|=|E_T|+1$ holds.
        If the elimination process were to fracture the undirected subgraph into multiple connected components, this condition would fail and our bounding analysis would not apply.

        To summarize the complete elimination process:
        \begin{itemize}
            \item[a.] The process operates iteratively on a \emph{mixed graph} containing both directed and undirected edges.
            \item[b.] At each step, either the edges incident to one single vertex, or all the edges incident to a path of vertices are oriented and thus \emph{eliminated} from the undirected subgraph.
            This elimination must never disconnect the undirected subgraph.
            \item[c. ] In every step, a branching set of new valid mixed graphs is generated depending on the profiles of the neighbors of the eliminated vertices.
            The number of these new mixed graphs is the \emph{cost} of the step.
            \item[d.] The upper bound for the total number of branches generated after one step is given by the \emph{maximizing cost} $C^*$ for single vertex elimination step and by the \emph{total cost} for path eliminations (see Lemma~\ref{lem:path}).
            \item[e.] Since the steps form a branching process, the global upper bound for any sequence of eliminations is calculated by multiplying the individual upper bounds of each step.
            \item[f.] The process terminates when the undirected subgraph is a tree.
            
        \end{itemize}

    \subsection{On the structure of separable \texorpdfstring{$4$}{4}-regular graphs}\label{sec:stru_s}

        In this subsection, we introduce several definitions and properties that pertain primarily to the structure of separable $4$-regular graphs.
        These are utilized both in Section~\ref{sec:bound} and in Section~\ref{sec:ARS_count} to derive upper bounds and algorithmic results for connected separable graphs.
    
        First, we recall the definition of a block-cut tree \cite{Harary69}:
    
        \begin{definition}
            Let $G=(V,E)$ be a connected graph.  
            The \emph{block-cut tree} of $G$, is a graph in which each vertex represents either a biconnected component of $G$ or a cut vertex of $G$.  
            An edge in the block cut tree connects a biconnected component to a cut vertex if and only if the cut vertex belongs to that biconnected component.
        \end{definition}
        
        Block-cut trees are used for both the analysis of biconnected $4$-regular graphs and for separable graphs.
        Notice that if a biconnected component corresponds to a leaf in the block-cut tree, then it has exactly one cut vertex.
    
        Let us now state an observation about the cut vertices of $4$-regular graphs that follows directly from the fact that an Eulerian graph has no edges which are bridges.
    
        \begin{observation}\label{obs:hand}
            Let $G$ be a $4$-regular graph and $u$ be a cut vertex of $G$. 
            Then $G\setminus u$ has two connected components, $G_1$ and $G_2$, such that $u$ has exactly two incident edges connecting to each component.
        \end{observation}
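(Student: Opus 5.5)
We have to prove Observation~\ref{obs:hand}: if $u$ is a cut vertex of a $4$-regular graph $G$, then $G\setminus u$ has exactly two components and $u$ sends exactly two edges into each. The whole argument is a parity count on the four edge-ends at $u$, using the bridgelessness of Eulerian graphs noted after the Handshaking Lemma.

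\textbf{Step 1: each component receives an even number of edges.} Let $G_1,\dots,G_k$ be the components of $G\setminus u$, with $k\ge 2$ since $u$ is a cut vertex. Let $a_i$ be the number of edges (counted with multiplicity) joining $u$ to $G_i$. Each $a_i\ge 1$ because $G$ is connected, and $\sum_i a_i = d(u)=4$ because $G$ is loopless. Inside the subgraph $G_i$, every vertex has degree $4$ minus the number of edges it has to $u$. Summing over $V(G_i)$ gives
\begin{equation*}
\sum_{w\in V(G_i)} d_{G_i}(w) = 4|V(G_i)| - a_i .
\end{equation*}
By the Handshaking Lemma the left side is even, so $a_i$ is even.

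\textbf{Step 2: conclude.} Since each $a_i$ is even and positive, $a_i\ge 2$. Then $4=\sum_i a_i \ge 2k$ gives $k\le 2$, and with $k\ge 2$ we get $k=2$ and $a_1=a_2=2$. This is exactly the claim.

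Step~1 is the only point needing care: the degree count must use multiplicities, so that parallel edges from $u$ into one component are counted correctly. Once that is done, the rest is immediate. Equivalently, one could argue that $a_i=1$ would make that edge a bridge, which is impossible in an Eulerian graph, and then rule out $a_i=3$ by the same parity count.
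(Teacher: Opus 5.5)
Your proof is correct and rests on the same Handshaking-Lemma parity as the paper, which simply invokes the bridgelessness of Eulerian graphs. That already suffices, since every split of the four edges at $u$ into at least two positive parts other than $2+2$ contains a part equal to $1$, i.e.\ a bridge; you instead prove the slightly stronger fact that every $a_i$ is even. As a consequence, the separate parity argument against $a_i=3$ in your closing remark is unnecessary: $a_i=3$ forces some other $a_j=1$, which is already a bridge.
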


        Building on this observation, we describe the structure of cut vertices.
        Given a $4$-regular loopless multigraph $G$, let $V_c$ be the subset of its cut vertices.
        Let $G[V_c]$ denote the subgraph of $G$ induced by $V_c$.
        The following lemma details the structure of $G[V_c]$.
        Note that a single isolated vertex is not considered as a biconnected component.

        \begin{lemma}\label{lem:structure 4regular}
            Each biconnected component of the induced subgraph $G[V_c]$ is a cycle.
        \end{lemma}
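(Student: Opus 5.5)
The plan is to bound the degree of every vertex in $G[V_c]$ by $2$ and then use the fact that a graph with maximum degree at most $2$ has only cycles as its nontrivial biconnected components. The graph $G[V_c]$ may itself have parallel edges, so "cycle" includes the $2$-cycle formed by a double edge.

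\textbf{Step 1: degrees.} Let $u\in V_c$. By Observation~\ref{obs:hand}, $G\setminus u$ has exactly two components $G_1,G_2$, and $u$ sends exactly two edges to each. Suppose $u$ had three or more edges (counted with multiplicity) to other cut vertices. Then some component, say $G_1$, contains two edge-endpoints $w,w'$ that are cut vertices of $G$; here $w=w'$ is allowed, which is the double-edge case.

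The aim is to derive a contradiction by showing $u$ has at most one incident edge into each side that leads to a cut vertex. I would first try the following argument. Let $w\in V(G_1)$ be a cut vertex adjacent to $u$. By Observation~\ref{obs:hand}, $G\setminus w$ splits into two components $H_1,H_2$, with $w$ sending two edges to each. Now $u$ lies in one of them, say $H_1$. Then $G_2\cup\{u\}$ is connected and avoids $w$, so it lies inside $H_1$. Hence $H_2\subseteq G_1\setminus w$, and $H_2$ attaches to the rest of the graph only through $w$.

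Next, suppose $u$ has a second edge into $G_1$, ending at $w'$. If $w'=w$, then $w$ has a double edge to $u$, and those two edges are exactly the two edges from $w$ into $H_1$. If $w'\neq w$, then $w'$ is a cut vertex in $G_1\cap H_1$ (it is joined to $u$ without passing through $w$). The remaining work is to check what configurations are then possible and to show that $d_{G[V_c]}(u)\le 2$ regardless.

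\textbf{Main obstacle.} This local counting is the step I expect to be delicate: a vertex could a priori have one cut-vertex neighbour on each side plus more. A cleaner route, which I would adopt if the local count gets messy, is a parity/bridge argument on the block-cut tree. Every block containing $u$ meets $u$ in exactly two edges, because the two edges into $G_i$ lie in a common block: $G$ is bridgeless and the edges of a cut vertex into one side of a $2$-split belong to one block. So $u$ lies in exactly two blocks, and in each it has degree $2$.

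An edge $uw$ of $G[V_c]$ lies in one of these blocks, say $B$. If $B$ contains both of $u$'s edges to cut vertices on that side, the count is still at most two per block. This gives the bound $d_{G[V_c]}(u)\le 4$, which is not yet enough. The improvement to $2$ has to come from this: if both edges of $u$ within $B$ go to cut vertices, and likewise in the other block, then $u$ together with its neighbours would force a path structure whose vertices each have degree $2$ in $B$. I would push this through by considering a biconnected component $K$ of $G[V_c]$ directly, and so bypass Step 1.

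\textbf{Step 2: direct argument on $K$.} Let $K$ be a biconnected component of $G[V_c]$ with at least two vertices. $K$ is $2$-connected (or a multi-edge), so it lies inside a single block $B$ of $G$. Each cut vertex has degree exactly $2$ inside any block containing it, so every vertex of $K$ has degree at most $2$ in $B$, and hence in $K$. A $2$-connected graph (or a loopless multigraph on two vertices with at least two parallel edges) with maximum degree $2$ is a cycle. This argument is short and complete; the only care needed is to justify the per-block degree $2$ from Observation~\ref{obs:hand}. For that, note that the two edges from $u$ to $G_i$ lie in one block, since $G_i$ is connected and so provides a path between their far ends that avoids $u$. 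Also, the two groups of edges lie in different blocks, since $u$ separates them.
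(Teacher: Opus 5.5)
\textbf{Gap: single-edge components.} The problem is the sentence ``$K$ is $2$-connected (or a multi-edge)'' in Step~2. A biconnected component of $G[V_c]$ on two vertices can also be a single simple edge, i.e.\ a bridge of $G[V_c]$. Under the paper's definition (connected, no cut vertex) such a $K_2$ is biconnected, and your final step ``maximum degree $2$ implies cycle'' says nothing about it.

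The rest of Step~2 is sound, and it is a shorter route than the paper's induction on degree-$4$ vertices. Every cut vertex lies in exactly two blocks of $G$, with exactly two edges in each. A $2$-connected subgraph or double edge of $G[V_c]$ lies inside one block, so it is a cycle; in fact it is an entire block of $G$ consisting only of cut vertices. Your opening plan of bounding $d_{G[V_c]}(u)\le 2$ globally is false: a cut vertex shared by two triangles of cut vertices, as in $H_{32}$, has degree $4$ in $G[V_c]$. Step~2 correctly uses only the per-block bound.

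\textbf{The missing case actually occurs.} Take three copies of $K_5$. In the first, subdivide an edge $xy$ twice, producing a path $x\,a\,b\,y$. In the second, delete an edge $p_1p_2$ and join $p_1,p_2$ to $a$. In the third, delete an edge $q_1q_2$ and join $q_1,q_2$ to $b$. This is a simple connected $4$-regular graph on $17$ vertices whose only cut vertices are $a$ and $b$. So $G[V_c]$ is the single edge $ab$, which is biconnected but not a cycle.

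The paper's proof passes over the same case. It claims every vertex of $G[V_c]$ has degree $0$, $2$ or $4$, because an odd degree would force a bridge in $G$. Here $a$ and $b$ have degree $1$ in $G[V_c]$ while $G$ is bridgeless, so that claim fails, and so does the induction built on it. Neither argument can close the gap, because the lemma as written is false.

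What your block argument does establish is the corrected statement: every biconnected component of $G[V_c]$ is either a single edge or a cycle, and the cycles are exactly the blocks of $G$ all of whose vertices are cut vertices. You should state and prove it in that form.
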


        \begin{proof}
            As stated in Observation~\ref{obs:hand}, the removal of a cut vertex  results in a graph with exactly two connected components, with the removed vertex having exactly two incident edges connecting to each component.
            This implies that in the induced subgraph $G[V_c]$  every vertex has degree 0, 2 or 4. 
            If any vertex were to have an odd degree in $G[V_c]$, it would necessitate the existence of a bridge in $G$, contradicting the fact that connected Eulerian graphs are bridgeless.
            
            Let $\mathcal{T}$ be a non-trivial connected component of $G[V_c]$, i.e. $\mathcal{T}$ is not an isolated vertex of degree $0$ in $G[V_c]$.
            Thus, every vertex in $\mathcal{T}$ has a degree of either $2$ or $4$.
            We proceed by induction on the number of degree-$4$ vertices in $\mathcal{T}$ to show that every biconnected component of $\mathcal{T}$ is a cycle.
            
            If $\mathcal{T}$ contains no vertices of degree $4$, then every vertex has exactly degree $2$. 
            Since $\mathcal{T}$ is connected, it must be a single cycle. 
            A cycle is inherently biconnected, meaning the single biconnected component of $\mathcal{T}$ is indeed a cycle.
        
            Assume that for any non-trivial connected component $\mathcal{T}$ with fewer than $k$ vertices of degree 4, every biconnected component is a cycle.
        
            We now prove the case where $\mathcal{T}$ has exactly $k$ vertices of degree $4$ ($k \ge 1$).
            Let $v$ be a vertex of degree 4 in $G[V_c]$.
            By Observation~\ref{obs:hand}, $G\setminus v$ consists of two connected components, $G_1$ and $G_2$, and $v$ has exactly two incident edges connecting to $G_1$ and two incident edges connecting to $G_2$.
            Since $v$ has degree $4$ in $G[V_c]$, the endpoints of all four of these edges must lie in $V_c$, meaning they all belong to $\mathcal{T}$. 
            Thus, $v$ is also a cut vertex of $\mathcal{T}$. 
            
            Let $\mathcal{T}_1$ and $\mathcal{T}_2$ be the subgraphs of $\mathcal{T}\setminus v$ that are contained within $G_1$ and $G_2$, respectively. 
            The two incidents edges of $v$ connecting to each of $G_1$ and $G_2$ are also incident to $\mathcal{T}_1$ and $\mathcal{T}_2$ respectively.
            Let $\mathcal{T}_1 \cup \{v\}$ be the subgraph obtained by restoring $v$ to $\mathcal{T}_1$ along with its two edges connecting to $\mathcal{T}_1$.
            Similarly, let $\mathcal{T}_2\cup \{v\}$ be the subgraph obtained by restoring $v$ and its two edges connecting to $\mathcal{T}_2$.
            Clearly, every biconnected component of $\mathcal{T}$ must be a biconnected component of either $\mathcal{T}_1\cup\{v\}$ or $\mathcal{T}_2\cup\{v\}$.
            
            Observe that except $v$, every vertex belonging in $\mathcal{T}_1\cup\{v\}$ or $\mathcal{T}_2\cup\{v\}$ has the same degree in the respective graph that belongs as in $\mathcal{T}$, whereas $v$ has degree 2 in both of them.
            This means that in $\mathcal{T}_1\cup\{v\}$ and $\mathcal{T}_2\cup\{v\}$ every vertex has degree 2 or 4 and the number of vertices of degree~4 in both of them is $k-1$ (since $v$ is no longer degree $4$).
            Consequently, each subgraph has strictly fewer than $k$ vertices of degree $4$. 
            By the induction hypothesis, every biconnected component of both $\mathcal{T}_1\cup\{v\}$ and $\mathcal{T}_2\cup\{v\}$ is a cycle.
            This concludes that every biconnected component of $\mathcal{T}$ is a cycle and the same holds also for $G[V_c]$.
        \end{proof}

        Following Lemma~\ref{lem:structure 4regular}, we characterize the cut vertices belonging to the biconnected components of $G[V_c]$ as \emph{cycle cut vertices}, while the isolated ones are the \emph{single cut vertices}.

        Let $V^2_{c}$ and $V^4_{c}$ denote the sets of cut vertices of a $4$-regular graph $G$ with degrees $2$ and $4$ in $G[V_c]$, respectively.
        Notice that non-cut vertices can be connected either with single cut vertices, or with $V^2_{c}$ cut vertices. 
        Then the following Lemma holds.

        \begin{lemma}\label{lem:count_c}
            Let $\mathcal{T}$ be a connected component of $G[V_c]$ which is not a single vertex.
            Let $V^2_{\mathcal{T}}$ and $V^4_{\mathcal{T}}$ be the subsets of vertices from $V^2_{c}$ and $V^4_{c}$ belonging to $V(\mathcal{T})$, respectively, and let $s$ be the number of cycles in $\mathcal{T}$.
            Then:
            \begin{itemize}
                \item[(i)]  $s=|V^4_{\mathcal{T}}|+1$.
                \item[(ii)] If every cycle in $\mathcal{T}$ is simple, then $|V^4_{\mathcal{T}}|\leq |V^2_{\mathcal{T}}|-3$.
                Equality holds when every cycle has length $3$.
            \end{itemize}
        \end{lemma}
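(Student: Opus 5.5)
The plan is to read both parts off the block-cut tree of $\mathcal{T}$ itself, using Lemma~\ref{lem:structure 4regular} (every biconnected component of $G[V_c]$, hence of $\mathcal{T}$, is a cycle) and the degree information from its proof: every vertex of $\mathcal{T}$ has degree $2$ or $4$ in $G[V_c]$.

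\textbf{Step 1: how many cycles each vertex lies in.} Every edge of $\mathcal{T}$ lies in exactly one block, and every block is a cycle. Hence a vertex $x$ lying in $b_x$ blocks has degree at least $2b_x$, with equality, because each cycle through $x$ contributes exactly $2$ to its degree. So $d_{\mathcal{T}}(x) = 2b_x$.
\begin{itemize}
\item A vertex of $V^2_{\mathcal{T}}$ therefore lies in exactly one cycle, and so it is not a cut vertex of $\mathcal{T}$.
\item A vertex of $V^4_{\mathcal{T}}$ lies in exactly two cycles. It is a cut vertex of $\mathcal{T}$, as shown in the proof of Lemma~\ref{lem:structure 4regular}.
\end{itemize}
Consequently the cut vertices of $\mathcal{T}$ are exactly $V^4_{\mathcal{T}}$, and each of them has degree exactly $2$ in the block-cut tree of $\mathcal{T}$.

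\textbf{Step 2: part (i).} The block-cut tree of the connected graph $\mathcal{T}$ is a tree with $s + |V^4_{\mathcal{T}}|$ nodes. Its edges join block nodes to cut nodes, so the number of edges equals the sum of the degrees of the cut nodes, which is $2|V^4_{\mathcal{T}}|$. A tree has one fewer edge than nodes, so
\[
2|V^4_{\mathcal{T}}| = s + |V^4_{\mathcal{T}}| - 1,
\]
which gives $s = |V^4_{\mathcal{T}}| + 1$.

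\textbf{Step 3: part (ii).} Double-count vertex–cycle incidences. Summing the lengths $\ell_1,\dots,\ell_s$ of the cycles counts each vertex once per cycle containing it, so by Step 1
\[
\sum_{j=1}^{s} \ell_j = |V^2_{\mathcal{T}}| + 2|V^4_{\mathcal{T}}|.
\]
If every cycle is simple, i.e.\ no cycle is a $2$-cycle formed by a double edge, then $\ell_j \ge 3$ for all $j$. Using part (i), this gives
\[
|V^2_{\mathcal{T}}| + 2|V^4_{\mathcal{T}}| \ge 3s = 3|V^4_{\mathcal{T}}| + 3,
\]
that is, $|V^4_{\mathcal{T}}| \le |V^2_{\mathcal{T}}| - 3$. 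Equality holds exactly when every $\ell_j = 3$, so in particular when every cycle is a triangle.

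\textbf{Main obstacle.} The only delicate point is Step 1: justifying that a degree-$4$ vertex lies in exactly two blocks and a degree-$2$ vertex in exactly one. I would argue this purely from "every block is a cycle" together with "every edge lies in exactly one block", as above. This avoids any case analysis on the global structure of $G$.
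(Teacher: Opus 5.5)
Your proof is correct, and it takes a genuinely different route from the paper's.

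The paper proves both parts by induction on $|V^4_{\mathcal{T}}|$. It splits $\mathcal{T}$ at a vertex $v\in V^4_{\mathcal{T}}$ into $\mathcal{T}_1\cup\{v\}$ and $\mathcal{T}_2\cup\{v\}$, using the decomposition from the proof of Lemma~\ref{lem:structure 4regular}, so that $v$ has degree $2$ in each piece. It then carries (i) and (ii) through the identities $|V^2_{\mathcal{T}}|=|V^2_{\mathcal{T}_1}|+|V^2_{\mathcal{T}_2}|-2$, $|V^4_{\mathcal{T}}|=|V^4_{\mathcal{T}_1}|+|V^4_{\mathcal{T}_2}|+1$ and $s=s_1+s_2$, with a single cycle as the base case.

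You replace the induction with two global counts: the edge count of the block-cut tree of $\mathcal{T}$ gives (i), and double counting vertex--cycle incidences gives (ii). This has three advantages.
\begin{enumerate}
\item Combining your two counts gives the exact identity $|V^2_{\mathcal{T}}|-|V^4_{\mathcal{T}}|-3=\sum_{j}(\ell_j-3)$. So equality in (ii) holds \emph{if and only if} every cycle is a triangle, whereas the paper states only the ``if'' direction.
\item It shows that the only input needed is the conclusion of Lemma~\ref{lem:structure 4regular}, namely that every block is a cycle.
\item It avoids a looseness in the paper's induction. The paper phrases its induction hypothesis for all graphs whose degrees lie in $\{2,4\}$, and (i) is false on that class (e.g.\ $K_5$). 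Its argument really runs over graphs in which every block is a cycle, which your proof makes explicit.
\end{enumerate}
In exchange, the paper's route needs no block-cut-tree facts and reuses the splitting already built for Lemma~\ref{lem:structure 4regular}.

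One step you leave implicit is that $s$ equals the number of blocks of $\mathcal{T}$. This is immediate: every cycle lies in a single block, and a block that is itself a cycle contains exactly one cycle. It deserves a sentence in a write-up.
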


        \begin{proof}
            We prove both (i) and (ii) by induction on the number of degree-$4$ vertices in a graph $\mathcal{T}$ where every vertex has degree $2$ or $4$.
            If $|V^4_{\mathcal{T}}|=0$ then $\mathcal{T}$ is a cycle. 
            Clearly, in $\mathcal{T}$ it holds that $s=|V^4_{\mathcal{T}}|+1=1$, satisfying (i).
            Furthermore, if this cycle is simple then it has at least 3 vertices.
            Thus, $|V^4_{\mathcal{T}}|\leq |V^2_{\mathcal{T}}|-3$. 
            The equality holds if the length of the cycle is 3, satisfying (ii).

            Now, assume that (i) and (ii) hold in  every graph $\mathcal{T}$ where every vertex has degree 2 or 4 and for the number of vertices of degree 4 $V^4_{\mathcal{T}}$  are strictly less than $k$ for some $k \geq 1$. 
            We will prove that they also hold when the graph has exactly $k$ vertices of degree $4$. 
            
            Let $v\in V^4_{\mathcal{T}}$, and let the subgraphs $\mathcal{T}_1\cup\{v\}$ and $\mathcal{T}_2\cup\{v\}$ be constructed as in the proof of Lemma~\ref{lem:structure 4regular}.
            Clearly, $\mathcal{T}_1\cup\{v\}$ and $\mathcal{T}_2\cup\{v\}$ satisfy the conditions of the induction hypothesis since, in each of them, every vertex has degree 2 or 4 and there are at most $k-1$ vertices degree 4.
            Let $V^2_{\mathcal{T}_1}$ and $V^4_{\mathcal{T}_1}$ be the vertices of degree 2 and 4 respectively in $\mathcal{T}_1\cup\{v\}$. 
            Similarly, let $V^2_{\mathcal{T}_2}$ and $V^4_{\mathcal{T}_2}$ be the vertices of degree 2 and 4 respectively in $\mathcal{T}_2\cup\{v\}$.
            
            Except for $v$, every vertex belonging in $\mathcal{T}_1\cup\{v\}$ or $\mathcal{T}_2\cup\{v\}$ has the same degree in the respective graph that belongs as in $\mathcal{T}$, whereas $v$ has degree 2 in both of them. 
            Let also  $s_1$ and $s_2$ be the number of cycles in $\mathcal{T}_1\cup\{v\}$ or $\mathcal{T}_2\cup\{v\}$ respectively. 
            Then, the following equalities hold:

            \begin{align*}
                |V^2_{\mathcal{T}}| &= (|V^2_{\mathcal{T}_1}|-1)+(|V^2_{\mathcal{T}_2}|-1) = |V^2_{\mathcal{T}_1}|+|V^2_{\mathcal{T}_2}|-2,  \\
                |V^4_{\mathcal{T}}| &= |V^4_{\mathcal{T}_1}|+|V^4_{\mathcal{T}_2}|+1, \text{ and } \\
                s &= s_1+s_2.
            \end{align*}
        
            \textbf{Proof of (i).} By the induction hypothesis, it holds that $s_1=|V^4_{\mathcal{T}_1}|+1$ and $s_2=|V^4_{\mathcal{T}_2}|+1$.
            Combining this with the above equalities yields:
            $$s = s_1+s_2 = (|V^4_{\mathcal{T}_1}|+1) + (|V^4_{\mathcal{T}_2}|+1) = |V^4_{\mathcal{T}}|+1.$$
        
            \textbf{Proof of (ii).} By the induction hypothesis, it holds that $|V^4_{\mathcal{T}_1}|\leq |V^2_{\mathcal{T}_1}|-3$ and $|V^4_{\mathcal{T}_2}|\leq |V^2_{\mathcal{T}_2}|-3$. 
            Combining this with the above equalities yields:
             $$|V^4_{\mathcal{T}}| = |V^4_{\mathcal{T}_1}|+|V^4_{\mathcal{T}_2}|+1 \leq (|V^2_{\mathcal{T}_1}|-3) + (|V^2_{\mathcal{T}_2}|-3) + 1 = |V^2_{\mathcal{T}}|-3.$$
        
            If $|V^4_{\mathcal{T}_1}|= |V^2_{\mathcal{T}_1}|-3$ and $|V^4_{\mathcal{T}_2}|= |V^2_{\mathcal{T}_2}|-3$, then every cycle in $\mathcal{T}_1$ and $\mathcal{T}_2$ has length $3$ by the induction hypothesis. 
            In this case, clearly every cycle in $\mathcal{T}$ also has length $3$, and the above inequality holds with equality, i.e., $|V^4_{\mathcal{T}}|=|V^2_{\mathcal{T}}|-3$.
        \end{proof}

        \begin{figure}[tbh]
            \centering
            \separablexanple
            \caption{A separable $4$-regular graph $G_{30}$ and its corresponding block-cut tree. 
            The dashed edges do not belong to $G_{30}$. The graph contains four boundary $2$-ARSs and one inner $2$-ARS (highlighted in cyan). 
            With the addition of the dashed edges, the boundary ARSs form their regular representatives: two copies of $K_5$ and two copies of $G_6$ (the unique simple $4$-regular graph with $6$ vertices). 
            Observe that although $G_{30}$ is a simple graph, the regular representative of the inner $2$-ARS is a multigraph that forms a double cycle (see also the proof of Theorem~\ref{th:multi_b}). 
            The set of cut vertices is $V_c=\{v_1, \dots, v_5\}$. 
            The induced subgraph $G[V_c]$ consists of a single cycle ($v_1, v_2, v_3$) and two isolated vertices ($v_4, v_5$).  
            The leaves of the block-cut tree correspond exactly to the boundary $2$-ARSs.}
            \label{fig:ARSexamples}
        \end{figure}

        Finally, inspired by block-cut trees, we introduce a special class of subgraphs, the \emph{almost regular subgraphs (ARS)}(see Figure~\ref{fig:ARSexamples}).

        \begin{definition}\label{def:ARS}
            Let $G=(V,E)$ be a $4$-regular graph and $V_c$ be the set of its cut vertices. 
            Let $V_c^{\prime} \subseteq V_c$ be a non-empty subset of cut vertices. 
            A subgraph $G^{\prime}$ of $G$ is called an \emph{almost regular subgraph (ARS)} of $G$ if it is a connected component, or a union of connected components, of the subgraph $G\setminus V_c^{\prime}$ such that a connected $4$-regular graph $G^*$ is obtained by adding exactly one edge between a pair of vertices in $V(G^{\prime})$ for each common neighbor they share in $V_c^{\prime}$.
            The graph $G^*$ is the \emph{regular representative} of the ARS $G^{\prime}$.
            If, additionally, $G^*$ is biconnected, then $G^\prime$ is termed a \emph{$2$-ARS} of $G$.
        \end{definition}
        
        Recall that a biconnected component corresponding to a leaf of a block-cut tree contains exactly one cut vertex.
        This biconnected component, excluding the cut vertex itself, forms an ARS, particularly a $2$-ARS.
        Such an $2$-ARS is termed a \emph{boundary ARS}, while any non-boundary ARS is termed an \emph{inner ARS}.
        Note that an inner ARS is not necessarily connected, even if its regular representative is biconnected (see Figure~\ref{fig:ARSDthree}).

        Finally, we define two graph operations between regular graphs.

        \begin{definition}\label{def:glue_simple}
            Let $G^*_1, G^*_2$ be connected $4$-regular graphs.
            Given two edges $e_1=(u_{1},u_{2})\in E(G^*_1)$ and $e_2=(v_{1},v_{2})\in E(G^*_2)$, the operation that constructs a new graph $G$ by deleting these edges and connecting all four endpoints to a newly introduced vertex $v\notin V(G^*_1) \cup V(G^*_2)$ is called the \emph{simple gluing} of $G^*_1$ and $G^*_2$.
            This operation is denoted by:
            $$ G=  \underset{(e_1,e_2,v)}{\odot} \{G^*_1,G^*_2\}. $$
        \end{definition}
        
        \begin{definition}\label{def:glue_cycle}
            Let $\mathcal{T}$ be a connected graph in which every biconnected component is a cycle, and let $V^2_{\mathcal{T}}$ denote the set of all degree-$2$ vertices in $\mathcal{T}$.
            Let $G^*_1, \dots, G_{|V^2_{\mathcal{T}}|}^*$ be a collection of connected $4$-regular graphs.
            Given a set of edges $E^\prime= \{e_1, \dots, e_{|V^2_{\mathcal{T}}|}\}$ where $e_i\in E(G^*_i)$, we define an operation that constructs a new graph $G$ by deleting each edge $e_i$ from its respective graph $G^*_i$, and connecting both of its endpoints to a uniquely corresponding vertex $v_i\in V^2_{\mathcal{T}}$.
            This operation is termed the \emph{cycle gluing} of $G^*_1, \dots, G_{|V^2_{\mathcal{T}}|}^*$, and is denoted by:
            $$ G=  \underset{(E^\prime, \mathcal{T})}{\otimes} \{G^*_1,\dots, G^*_{|V^2_{\mathcal{T}}|}\}. $$
        \end{definition}

        \begin{figure}[tbh]
            \centering
            \ARSDthree
            \caption{
            In this figure, we illustrate a separable graph.
            The dashed edges in the figure are those that do not belong to the initial $4$-regular graph and they may be added to obtain the regular representatives of the ARSs.
            The only inner ARS of this graph is a union of two connected components: a double edge $(u_1, u_2)$ and an isolated vertex $u_3$ (both highlighted in red). 
            This disjoint subgraph strictly satisfies our definition of an ARS (see Definition~\ref{def:ARS}).
            By adding exactly one edge between the vertices $u_1, u_2, u_3$ for each common cut vertex they share, a connected $4$-regular graph (specifically, the double cycle $\mathcal{D}_3$ - see also the proof of Theorem~\ref{th:multi_b}) is successfully constructed.
            }
            \label{fig:ARSDthree}
        \end{figure}         
        
        Note that both operations construct larger $4$-regular graphs, and that any graph $G^*_i$ in the above definitions acts as the regular representative of an ARS in $G$.
        
    \section{Bounding the Eulerian orientations.}\label{sec:bound}

        In this section, we first establish an exact bound for biconnected multigraphs, proving Theorem~\ref{th:multi_b}.
        Then, we prove Theorems~\ref{th:bi} and \ref{th:sep}, which bound the number of Eulerian orientations for biconnected and separable simple $4$-regular graphs, respectively.
        All proofs utilize techniques similar to those presented in \cite{bet}.
        Furthermore, we present computational results for simple graphs and compare them with both our derived bounds and the existing bound by Las Vergnas \cite{LasVergnas}.
        
        As noted in Section~\ref{sec:pre}, throughout the elimination process, the undirected subgraph is required to remain connected.
        Naturally, if the procedure is permitted to select vertices of degree $4$, maintaining this connectivity is trivial.
        
        However, in the following lemma, we prove that after the initial step, it is always possible to select and orient the incident edges of a vertex $v$ with degree $d(v) \leq 3$ in the undirected subgraph of any mixed graph $J$ originating from an initially \emph{biconnected $4$-regular graph}.

        \begin{lemma}\label{lem:bi3}
            Let $G_J$ be the undirected subgraph of a mixed graph $J$ during the elimination process after the initial step, and let $G_b$ be a biconnected subgraph of $G_J$.
            Let also $v$ be the next vertex whose edges are to be oriented, such that $v\in G_b$ and $G_{b}^{^\prime}=G_b\setminus v$ is separable.
            Then, every biconnected component $G_{b_i}$ of $G_{b}^{\prime}$ has at least two vertices $v_1, v_2$ such that $d_{G_{b_i}}(v_1) \leq 3$ and $d_{G_{b_i}}(v_2) \leq 3$. 
        \end{lemma}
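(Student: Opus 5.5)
We argue through the block-cut tree of $G_b^{\prime}=G_b\setminus v$ together with the fact that every vertex of $G_J$ has degree at most $4$. After the initial elimination step, no vertex of the undirected subgraph has degree exceeding $4$. Since $G_b$ is biconnected and $G_b^{\prime}$ is separable, $G_b^{\prime}$ is connected. Removing $v$ from a biconnected graph cannot disconnect it; at worst it creates cut vertices, so we work with the block-cut tree of $G_b^{\prime}$.

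\textbf{Step 1: leaf blocks.} Let $G_{b_i}$ be a leaf block, with unique cut vertex $c$. We claim some vertex of $G_{b_i}$ other than $c$ is adjacent to $v$ in $G_b$. If not, then $c$ would separate $G_{b_i}\setminus c$ from the rest of $G_b$, contradicting the biconnectivity of $G_b$. Take such a vertex $w\neq c$. Its degree drops by at least one when $v$ is removed, and it has no edges leaving $G_{b_i}$ other than toward $v$, since non-cut vertices lie in a unique block. Hence $d_{G_{b_i}}(w)\le 3$.

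For the cut vertex $c$ itself: $c$ has at least one edge into another block, since it belongs to at least two blocks and every block of a graph with at least two vertices contains an edge. Therefore $d_{G_{b_i}}(c)\le 4-1=3$. This gives two distinct vertices $w,c$ of degree at most $3$ in every leaf block.

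\textbf{Step 2: inner blocks.} Here $G_{b_i}$ contains at least two cut vertices $c_1,c_2$ of $G_b^{\prime}$. Each has an edge into a different block, so $d_{G_{b_i}}(c_j)\le 3$ by the same degree count. This gives the two required vertices directly.

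\textbf{Step 3: degenerate blocks.} We must check that the degree bound is legitimate in degenerate cases. If $G_{b_i}$ is a single edge, or a bundle of parallel edges, both endpoints trivially qualify, as long as the multiplicity bundle does not reach $4$. A multiplicity-$4$ bundle is impossible here: it would be a whole connected component, contradicting connectivity with $v$. We also note that biconnected components of multigraphs are taken with parallel edges included, so the degree counts above remain valid.

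\textbf{Main obstacle.} The hardest part is justifying that degrees in $G_J$ are at most $4$ and that a cut vertex really has an edge outside the block. The second point is exactly where a block consisting only of the cut vertex could cause trouble, which is why blocks are assumed to have at least one edge.

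I would state explicitly that the bound $d_{G_J}(u)\le 4$ is inherited from $4$-regularity of the original graph. This holds because elimination only removes edges.
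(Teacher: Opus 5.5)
Your proof is correct and rests on the same key idea as the paper. Every block of $G_b'$ contains a cut vertex of $G_b'$, which has degree at most $3$ in that block, and the biconnectivity of $G_b$ forces a second vertex of the block to have an edge leaving it (to $v$ or to another block). The paper does this uniformly by following a path in $G_b\setminus v_1$ into $G_{b_i}$ and taking its entry vertex $v_2$, whereas you split into leaf blocks (second vertex is a neighbour of $v$) and inner blocks (two cut vertices); your Step~3 is redundant, since Steps~1--2 already cover degenerate blocks.
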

        
        \begin{proof}
        Let $G_{b_i}$ be a biconnected component of $G_{b}^{\prime}$ and let $v_1\in V(G_{b_i})$ be a cut vertex of $G_{b}^{\prime}$.
        This implies that $v_1$ is connected to at least one vertex $u \notin V(G_{b_i})$, and thus $d_{G_{b_i}}(v_1)\leq 3$.
        Since $G_b$ is biconnected,  $G_b\setminus v_1$ is connected. This implies that  for every $u^\prime\in V(G_{b_i})\setminus v_1$ there exists a path $P$ in $G_b\setminus v_1$ between $u$ and $u^\prime$.
        The vertex $v$ must belong to every such path $P$; otherwise, $v_1$ would not be a cut vertex of $G_{b}^{\prime}$.
        
        In such a path $P$, since $u\notin V(G_{b_i})$ and $u^\prime\in V(G_{b_i})$, there must exist an edge $e=(v_2,v^{\prime})$ in $P$ such that $v_2\in V(G_{b_i})$ and either $v^{\prime}=v$ or $v^{\prime}\in V(G_{b}^{\prime})\setminus V(G_{b_i})$.
        Then $d_{G_{b_i}}(v_2)\leq 3$ and this concludes that $G_{b_i}$ contains at least 2 vertices $v_1, v_2$ of degree at most 3.
        
        \end{proof}
        
        \begin{corollary}\label{cor:bi}
        Let $J$ be a mixed graph appearing after the first step of the elimination process applied to a biconnected $4$-regular graph.
        If $G_b$ is a biconnected component corresponding to a leaf in the block-cut tree of the undirected subgraph $G_J$, then there is a vertex $u$ in $G_b$ such that $d_{G_b}(u)\leq 3$ and $u$ is not a cut vertex in $G_J$.
        \end{corollary}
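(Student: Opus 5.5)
The plan is to split on whether $G_J$ is itself biconnected, and in the separable case to use a counting argument at the unique cut vertex of a leaf block. The only facts needed are that $J$ arises after the first step from a biconnected $4$-regular graph $G$, and that every vertex of $G_J$ has degree at most $4$.

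If $G_J$ is biconnected, then $G_b=G_J$ and there are no cut vertices at all. It remains to find a vertex of degree at most $3$. After the first step at least one vertex $w$ has been eliminated. Since $G$ is connected and $w$ has neighbours, some neighbour of $w$ that is still in $G_J$ has lost an edge, so its degree in $G_J$ is at most $3$. For this I would argue directly that not every remaining vertex has full degree $4$: the edge set of $G_J$ is missing all edges incident to eliminated vertices, and $G$ is connected, so some edge joins an eliminated vertex to a surviving one. (If $G_J$ were empty, the statement would be vacuous.)

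If $G_J$ is separable, let $G_b$ be a leaf block of its block-cut tree and let $c$ be its unique cut vertex. I would first try to invoke Lemma~\ref{lem:bi3} with the previously eliminated vertex, but the hypotheses are awkward, so the primary route is a direct argument modelled on its proof. Because $c$ is a cut vertex of $G_J$, it has at least one edge leaving $G_b$, so $d_{G_b}(c)\le 3$. However, $c$ is excluded, so a second vertex $u\neq c$ with $d_{G_b}(u)\le 3$ is needed.

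To find $u$, use the biconnectivity of the original graph $G$. The set $V(G_b)\setminus\{c\}$ is nonempty, since a block has at least two vertices. Some vertex $w$ of the rest of $G_J$, beyond $c$, lies outside $G_b$. In $G$, removing $c$ leaves the graph connected, so there is a path in $G\setminus c$ from a vertex of $G_b\setminus c$ to $w$. In $G_J$ every such path passes through $c$, so the $G$-path must use an edge not in $G_J$. Take the first edge on this path that leaves $V(G_b)\setminus\{c\}$. That edge is either an edge of $G$ not present in $G_J$ (incident to an eliminated vertex or already oriented), or an edge of $G_J$ to a vertex outside $G_b$. The second option is impossible, since $G_b$ is a leaf block attached only via $c$. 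Hence its endpoint $u\in V(G_b)\setminus\{c\}$ has lost at least one of its $4$ original edges relative to $G_b$, giving $d_{G_b}(u)\le 3$. Moreover, $u$ is not a cut vertex of $G_J$, because $c$ is the only cut vertex in a leaf block.

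The main obstacle is the careful handling of multigraph edge counting and the justification that the endpoint $u$ lies in $G_b$ rather than in another block. Both follow from the leaf-block property. I expect the rest to be routine.
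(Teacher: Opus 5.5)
Your proof is correct, and it takes a different route from the paper's. The paper gives no separate proof of the corollary; it reads it off from Lemma~\ref{lem:bi3}. That lemma looks at the moment a separation is created. It starts from a block $G_b$ that is biconnected \emph{before} the next vertex $v$ is eliminated, and finds two vertices of degree at most $3$ in every block of $G_b\setminus v$. Since a leaf block contains only one cut vertex of $G_J$, one of these two is a non-cut vertex. That derivation relies on the relevant block having been biconnected one step earlier, which amounts to an implicit induction along the elimination sequence. It also does not by itself cover the case where $G_J$ is still biconnected.

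Your argument uses only that the original graph $G$ is biconnected and $4$-regular, so it does not depend on how $J$ was reached. Because $V(G_b)\setminus\{c\}$ is a nonempty proper subset of the vertex set of the connected graph $G\setminus c$, some edge of $G$ leaves it. Its endpoint $u$ then satisfies $d_{G_b}(u)\le 3$ automatically. The leaf-block property is needed only to certify that $u$ is not a cut vertex. So your step excluding a $G_J$-edge from $u$ to a vertex outside $G_b$ is true but not required for the degree bound. Your Case~1 is exactly where the hypothesis that the first step has already been performed is used.

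What your route buys is a self-contained proof valid for every mixed graph $J$ occurring in the process. What the paper's route buys is the stronger intermediate statement of Lemma~\ref{lem:bi3}: two low-degree vertices in every block of $G_b\setminus v$, not only in the leaf blocks.
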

        
        The lemma and corollary above confirm the existence of an elimination process for biconnected graphs such that, after the first step, whenever a new cut vertex appears during an elimination step, it is guaranteed that at least one non-cut vertex $v\in V_J$ has degree $d(v) \leq 3$ due to the biconnectivity of the component in the previous step.
        Thus, it is always possible to orient the incident edges of vertices with degree at most $3$ in the undirected subgraph, while keeping the undirected graph connected.
        This is evidently applicable to both multigraphs and simple graphs.

        \begin{figure}
            \begin{center}
		      \begin{tabular}{cc}
			
			\begin{tikzpicture}[scale=1.2]
			         \fourbond
			\end{tikzpicture}
			\hspace*{6mm}& \hspace*{6mm}
			\begin{tikzpicture}[scale=1.2]
			         \dfour
			\end{tikzpicture}\\
            a. $4$-bond multigraph $M_2$ \hspace*{6mm}& \hspace*{6mm} b. $\mathcal{D}_4$ graph.
		      \end{tabular} 
		\caption{
            (a) The $4$-bond multigraph $M_2$ with two vertices and four parallel edges, and (b) the graph $\mathcal{D}_4$ consisting of four vertices forming a double cycle. 
            Both graphs exactly attain the upper bound presented in Theorem~\ref{th:multi_b}; specifically, $|\mathcal{E}(M_2)|=2^2+2=6$ and $|\mathcal{E}(\mathcal{D}_4)|=2^4+2=18$. 
            Note that $M_2$ also exactly attains Schrijver's bound~\cite{Schrijver1983}.
            }\label{fig:multi_b}
	       \end{center}
        \end{figure}

        Now we are ready to treat the case of biconnected multigraphs.

        \begin{proof}[Proof of Theorem~\ref{th:multi_b}]
            Let $G=(V,E)$ be a $4$-regular biconnected multigraph with $n$ vertices.
            Throughout an elimination process as described in Section~\ref{sec:elim}, the vertices are eliminated according to the following degrees:
            \begin{itemize}
                \item $1$ vertex with degree $4$ in the first step, with cost $C(4,0)=6$ (due to Corollary~\ref{cor:bi}).
                \item $n_1$ vertices with degree $1$, with cost $C(1,1)=C(1,2)=1$.
                \item $n_2$ vertices with degree $2$, with a maximum average cost of $\left(\frac{2^{n_2+1}+1}{3}\right)^{1/n_2}$ according to Lemma~\ref{lem:path}.
                \item $n_3$ vertices with degree $3$, with maximizing cost $C^*(3)=3$.
            \end{itemize}
            Thus, the total number of Eulerian orientations is bounded by:
            \begin{equation}\label{eq:pbcbound1}
               |\mathcal{E}(G)| \leq 6\cdot 3^{n_3-1}\cdot \left( 2^{n_2+1}+1\right)= 3^{n_3}\cdot \left( 2^{n_2+2}+2\right).
            \end{equation}

            Since at each step the number of newly oriented edges equals the degree $d(v)$ of the eliminated vertex $v$, the undirected subgraph in the subsequent step has exactly $d(v)$ fewer edges and $1$ fewer vertex.
            Recall also that the elimination process terminates when the undirected graph $G_T=(V_T,E_T)$ forms a tree.
            Therefore, we have the following equations:
            \begin{align*}
              |V_T| &= n-n_1-n_2-n_3-1 \\
              |E_T| &= 2n-n_1-2n_2-3n_3-4\\
              |V_T| & = |E_T|+1
            \end{align*}
            yielding the relation:
            \begin{equation}\label{eq:tree_multi}
                n=2n_3+n_2+2.
            \end{equation}

        	Substituting this into Equation~\eqref{eq:pbcbound1}, we derive:
            \begin{equation}\label{eq:final_multi_1}
               |\mathcal{E}(G)| \leq 3^{n_3}\cdot \left( 2^{n-2n_3}+2 \right).
            \end{equation}

            We will show that the upper bound in Equation~\eqref{eq:final_multi_1} is a non-increasing function of $n_3$ for a fixed $n$. To show this, we define the function $f(n_3)=3^{n_3}\cdot (2^{n-2n_3}+2)$ and examine the ratio:
            $$ \frac{f(n_3)}{f(n^\prime_3)}, $$ 
            with $n_3^\prime=n_3+1$.
            Using Equation~\eqref{eq:tree_multi} and the fact that $n_2 \geq 0$, we establish the inequalities $2n^\prime_3\leq n-2 \Rightarrow 2n_3 \leq n-4$.
            Thus, we evaluate:
            \begin{align*}
              \frac{f(n_3)}{f(n^\prime_3)} & \geq 1 &\Rightarrow  \\
              \frac{3^{n_3}\cdot (2^{n-2n_3}+2)}{3^{n_3+1}\cdot (2^{n-2n_3-2}+2)} & \geq 1 &\Rightarrow \\
              2^{n-2n_3}+2 & \geq   \frac{3}{4}2^{n-2n_3}+6 &\Rightarrow\\
              2^{n-2n_3} & \geq 16 
            \end{align*}
            which clearly holds true given $2n_3 \leq n-4$.
            Thus, the upper bound is strictly maximized when $n_3 = 0$, giving:
            \begin{equation}\label{eq:final_multi}
                |\mathcal{E}(G)| \leq 2^{n}+2.  
            \end{equation}

            Clearly, for the $4$-bond multigraph ($2$ vertices, $4$ edges), the bound is exact, since the first vertex is eliminated with cost $6$ and consequently all the incident edges of the second vertex are already oriented.
            This yields exactly $6$ Eulerian orientations for the whole graph.
            In order to construct larger graphs that attain the upper bound, we must satisfy $n_3=0$ (as shown by the final inequality evaluating Equation~\eqref{eq:final_multi_1}).
            Additionally, the vertices eliminated with degree $2$ must share a double edge, in order to satisfy the equality in Equation~\eqref{eq:pbcbound1} (see the proof of Lemma~\ref{lem:path}).
            
            Let $\mathcal{D}_n$ denote a graph with $n$ vertices that forms a \emph{double cycle}, meaning that each vertex is connected to exactly two other vertices by two parallel edges (see Figure~\ref{fig:multi_b}).
            Every such graph exactly attains the bound; the first vertex is eliminated as a degree $4$ vertex, and all subsequent vertices are eliminated as degree $2$ vertices with a double edge, except for the final one.
            In this specific construction, $n_3=0$ and $n_2=n-2$, which perfectly yields the equality in Equation~\eqref{eq:final_multi}.
        \end{proof}

        Now we treat the case of simple biconnected graphs.

        \begin{proof}[Proof of Theorem~\ref{th:bi}]
            Let $G=(V,E)$ be a simple $4$-regular biconnected graph with $n$ vertices.
            The main difference with the proof of Theorem~\ref{th:multi_b} is that the maximum average cost for vertices eliminated with degree~2 is $5/3$ (see Lemma~\ref{lem:path}).
            Let $n_1, n_2, n_3$ be defined as in the proof Theorem~\ref{th:multi_b}.
            Following the same procedure, applying the maximizing cost for degree-$2$ vertices in simple graphs, and substituting $n_3 = (n-n_2-2)/2$ from the tree relation in Equation~\eqref{eq:tree_multi}, we obtain the following upper bound on the number of orientations:
            $$ |\mathcal{E}(G)| \leq 6\cdot 3^{n_3} \cdot \left(\frac{5}{3} \right)^{n_2}= 6\cdot 3^{(n-n_2-2)/2}\cdot \left(\frac{5}{3}\right)^{n_2} \leq 2\cdot 3^{n/2}, $$
            which holds since $5/3^{3/2} < 1$.
            This establishes the upper bound for the simple biconnected case.
        \end{proof}

        Finally, we give the proof for Theorem~\ref{th:sep}, which applies the same methods as before to provide a bound for simple separable graphs.
        Notice that Lemma~\ref{lem:bi3} and Corollary~\ref{cor:bi} do not apply here for to the graph as a whole; since the graph is separable, the elimination process may be forced to select multiple degree-$4$ vertices to maintain the connectivity of the undirected subgraph.
        For example, during the elimination of $G_{30}$ (see Figure~\ref{fig:ARSexamples}), if one of the biconnected components attached to the cycle of cut vertices is fully eliminated, one must either eliminate a cut vertex (which disconnects the graph) or eliminate a degree-$4$ vertex.
        This occurs due to the absence of $2$-vertex connectivity of $G_{30}$; since the graph as a whole is not a single biconnected component, Corollary~\ref{cor:bi} cannot be applied.
        This naturally yields a larger upper bound than in the biconnected case.

        \begin{proof}[Proof of Theorem~\ref{th:sep}]
            Let $G(V,E)$ be a $4$-regular simple separable connected graph.
            Then, let $n_2, n_3, n_1$ be as in the proof of Theorem~\ref{th:bi}.
            Let also $n_4$ denote the number of vertices eliminated with degree $4$, since multiple degree~4 vertices may be eliminated in that case.
            Then the tree condition leads to the following equations:
            \begin{align*}
              |V_T| &= n-n_1-n_2-n_3-n_4 \\
              |E_T| &= 2n-n_1-2n_2-3n_3-4n_4\\
              |V_T| & = |E_T|+1 \\
            \end{align*}
            yielding
            $$ n=3n_4+2n_3+n_2-1$$
            and the number of orientations is bounded by 
            
            \begin{equation}\label{eq:final_simple}
                |\mathcal{E}(G)| \leq 6^{n_4}\cdot  3^{n_3}\cdot \left( \frac{5}{3}\right)^{n_2} = 
                6^{(n+1)/3}\cdot \left(\frac{3}{6^{2/3}}\right)^{n_3}\cdot \left(\frac{5}{3\cdot 6^{1/3}}\right)^{n_2} \leq 6^{(n+1)/3}.
            \end{equation}
        \end{proof}

        \paragraph{Computational results.}
        Let us now provide computational results regarding the exact number of Eulerian orientations for all simple $4$-regular graphs with up to $15$ vertices, summarized in Table~\ref{tab:comps}.
        To generate all such graphs up to isomorphism, we utilized the \texttt{nauty} graph generator \cite{nauty} within \textsc{Sagemath}, and we computed their respective number of orientations using the algorithm detailed in \cite{code_mBezout}.
        The complete dataset from our exhaustive computations is available in \cite{dataforpaper}.
        Illustrations of the specific graphs that attain the maximum number of orientations for a given $n$, in both the separable and biconnected cases, are provided in Appendix~\ref{app:simple}.
        
        We remark that our theoretical bounds are not tight, at least for graphs with a small number of vertices, although they constitute a major improvement over existing bounds.
        Furthermore, we observe that for $11 \leq n \leq 15$, the separable graph $G$ maximizing $|\mathcal{E}(G)|$ possesses strictly more orientations than any biconnected graph of the same order (note that no simple separable $4$-regular graphs exist for $n \leq 10$).
        Another observation is that the graphs that maximize the orientation count among biconnected graphs with $n\leq 9$ are $3$-connected, whereas for $10 \leq n \leq 15$ they are strictly biconnected (i.e., having a vertex connectivity of exactly $2$; see the figures in Appendix~\ref{app:simple_bi}).

        \begin{table}[tbh]
        \label{tab:comps}
            \centering
            \begin{tabular}{|c|c|c|c|c|c|c|c|}
                \hline
                \multirow{2}{*}{$\bm{n}$} & \multicolumn{3}{c|}{\textbf{Biconnected Graphs}} & \multicolumn{3}{c|}{\textbf{Separable Graphs}} &  \\ \cline{2-7}
                & $\#$  & max  & biconnected & $\#$  & max  & separable & Las Vergnas' \\
                & of graphs & $|\mathcal{E}(G)|$ & bound & of graphs & $|\mathcal{E}(G)|$ &   bound & bound \\
                \hline
                5 & 1 & 24 & 31 & -- & -- & -- & 36 \\ \hline
                6 & 1 & 38 & 54 & -- & -- & -- & 72 \\ \hline
                7 & 2 & 60 & 94 & -- & -- & -- & 144 \\ \hline
                8 & 6 & 96 & 162 & -- & -- & -- & 288 \\ \hline
                9 & 16 & 160 & 281 & -- & -- & -- & 576 \\ \hline
                10 & 59 & 288 & 486 & -- & -- & -- & 1152 \\ \hline
                11 & 264 & 480 & 842 & 1 & 576 & 1296 & 2304 \\ \hline
                12 & 1542 & 800 & 1458 & 2 & 960 & 2355 & 4608 \\ \hline
                13 & 10768 & 1344 & 2525 & 10 & 1600 & 4279 & 9216 \\ \hline
                14 & 88126 & 2304 & 4374 & 42 & 2688 & 7776 & 18432 \\ \hline
                15 & 805281 & 4224 & 7576 & 210 & 4608 & 14130 & 36864 \\ \hline
            \end{tabular}
            \caption{
                Eulerian orientations of simple $4$-regular graphs for the biconnected and separable case, and comparison with bounds.
                The total number of graphs for each vertex count $n$ is evaluated up to isomorphism. 
                Regarding the orientations, only the maximum number attained for a given $n$ is reported. 
                Illustrations of the specific graphs that maximize this count can be found in Appendix~\ref{app:simple}.
                The \textit{biconnected bound} and \textit{separable bound} columns correspond to the upper bounds established in Theorems~\ref{th:bi} and \ref{th:sep}, respectively, while \textit{Las Vergnas' bound} refers to the bound derived in \cite{LasVergnas}. 
                All evaluated bounds are rounded to the nearest integer.
            }
        \end{table}

    \section{Applying the ARS decomposition to count Eulerian orientations for separable graphs. }\label{sec:ARS_count}

        This section investigates the number of Eulerian orientations for separable connected $4$-regular graphs.
        Here, we apply the structural properties established in Section~\ref{sec:stru_s} - in particular, the ARS decomposition and our observations regarding cut vertices.
    
        First, we provide a formula to compute the total orientations as a function of the orientations of the constituent $2$-ARSs and the induced structure of the connected components of the cut vertices.
        This formula leads to a divide-and-conquer algorithm that computes the exact number of Eulerian orientations for separable graphs without relying on exhaustive enumeration.
        Furthermore, we demonstrate that the biconnected bound of Theorem~\ref{th:bi} is strictly insufficient for simple separable graphs, and we conjecture a sharp bound that is asymptotically tighter than the one provided in Theorem~\ref{th:sep}.

    \subsection{Computing the Eulerian orientations of separable connected graphs}

        In order to prove a closed formula for the computation of Eulerian orientations of separable connected graphs, we first establish several lemmas regarding the orientation of edges incident to cut vertices and the Eulerian orientations of ARSs.
        
        Observation~\ref{obs:hand} immediately yields the following lemma:
        
        \begin{lemma}\label{lem:cut}
            Let $G$ be a $4$-regular graph, let $u$ be a cut vertex, and let $G_1$ and $G_2$ be the two connected components of $G \setminus u$, as defined in Observation~\ref{obs:hand}.
            Then, in any Eulerian orientation of $G$, exactly one of the two incident edges connecting $u$ to $G_1$ is directed towards $u$, and the other is directed outwards $u$.
            The same strictly holds for $G_2$.
        \end{lemma}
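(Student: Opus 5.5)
The plan is a flow-conservation (cut) argument on the vertex set of one component. Fix an Eulerian orientation $\vec{G}$ of $G$. Let $G_1$ and $G_2$ be the two components of $G\setminus u$ given by Observation~\ref{obs:hand}. Set $S=V(G_1)$, and let $e_1,e_2$ be the two edges joining $u$ to $G_1$.

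The first step is to identify all edges leaving $S$. Any edge with exactly one endpoint in $S$ must have its other endpoint outside $S$. That endpoint cannot lie in $G_2$, because $G_1$ and $G_2$ are distinct components of $G\setminus u$. So it must be $u$. Hence the edge cut $\delta(S)$ is exactly $\{e_1,e_2\}$, and Observation~\ref{obs:hand} guarantees there are precisely two such edges, counted with multiplicity.

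The second step sums in-degree minus out-degree over all $w\in S$. Since $\vec{G}$ is Eulerian, each vertex contributes $0$, so the total is $0$. An edge with both endpoints in $S$ contributes $+1$ at its head and $-1$ at its tail, so it cancels. The total therefore equals the number of edges of $\delta(S)$ directed into $S$ minus the number directed out of $S$.

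It follows that exactly one of $e_1,e_2$ points into $S$, that is, away from $u$. The other points out of $S$, that is, towards $u$. The same computation with $S=V(G_2)$ gives the claim for the two edges joining $u$ to $G_2$.

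I see no real obstacle. The only point needing care is the multigraph case, where $e_1,e_2$ may be parallel edges to the same neighbour. The counting argument still works there, because the edge set is treated as a multiset throughout.
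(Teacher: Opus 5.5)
Your proof is correct and follows essentially the same route as the paper. Both arguments sum a degree quantity over $V(G_1)$ and use that the only edges leaving $V(G_1)$ are the two edges to $u$. The paper sums outdegrees (total $2m_1$) and uses the handshake count $|E(G_1)|=2m_1-1$, so exactly one unit of outdegree must come from an edge into $u$; your in-minus-out balance is a slightly cleaner variant of the same argument, since the internal edges cancel without having to be counted.
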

        
        \begin{proof}
            We treat the case of $G_1$.
            The proof for $G_2$ is analogous.
            Let $m_1 = |V(G_1)|$ be the number of vertices in $G_1$.
            In any Eulerian orientation of $G$, every vertex must attain an outdegree of exactly $2$.
            Therefore, the sum of the outdegrees of all vertices in $V(G_1)$ must equal exactly $2m_1$.
            The only edges in $G$ that have exactly one endpoint in $V(G_1)$ are the two incident edges connecting $u$ to $G_1$.
            This implies that the sum of the degrees of vertices in $G_1$ is exactly $4m_1-2$ and thus $|E(G_1)|=(4m_1 - 2) / 2 = 2m_1 - 1$. 
            Clearly, the edges of $E(G_1)$ contribute exactly $2m_1 - 1$ to the sum of outdegrees of $V(G_1)$ in every Eulerian orientation of $G$.
            Consequently, the remaining required outdegree of $1$ must come from an edge being directed from a vertex in $G_1$ to $u$.
            Thus,  exactly one of these edges must be directed towards $u$ and the other outwards $u$
        \end{proof}
        
        The following lemma establishes a relation between the valid partial orientations of an ARS and the Eulerian orientations of its regular representative.
        
        \begin{lemma}\label{lem:ARS}
            Let $G$ be  a separable connected $4$-regular graph, and let $G^\prime=(V^\prime,E^\prime)$ be an ARS of $G$.
            Let also $G^*(V^\prime,E^*)$, with $E^\prime \subset E^*$, be the regular representative of $G^\prime$.
            Then, there is a bijection between the valid partial Eulerian orientations of $G^\prime$ (as induced by an Eulerian orientation of $G$) and the Eulerian orientations of $G^*$.
            \end{lemma}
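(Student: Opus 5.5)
The plan is to build an explicit map in both directions. First we fix what a ``valid partial Eulerian orientation'' of $G^\prime$ is: an orientation of the edges $E^\prime$, together with the orientations of the edges joining $V^\prime$ to the cut vertices in $V_c^\prime$, obtained by restricting some Eulerian orientation of $G$. The edge set $E^*\setminus E^\prime$ consists of one \emph{virtual} edge $e_w=(a_w,b_w)$ for each cut vertex $w\in V_c^\prime$ adjacent to $G^\prime$. Here $a_w,b_w$ are the two neighbours of $w$ in $V^\prime$. By Observation~\ref{obs:hand}, $w$ sends exactly two edges into the component of $G\setminus w$ containing $G^\prime$, so this pair is well defined.

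\textbf{Forward map.} Take a valid partial orientation of $G^\prime$. By Lemma~\ref{lem:cut}, for each such $w$ exactly one of $(a_w,w),(w,b_w)$ points into $w$ and the other points out. Say the orientation is $a_w\to w\to b_w$. We then orient the virtual edge $e_w$ as $a_w\to b_w$, so the two-edge path through $w$ is replaced by a single edge with the same tail and head. Every vertex of $V^\prime$ keeps its indegree and outdegree, because $a_w$ loses an out-edge to $w$ and gains the out-edge $e_w$, and symmetrically for $b_w$. All edges of $G$ at a vertex of $V^\prime$ are either in $E^\prime$ or go to a vertex of $V_c^\prime$ (since $G^\prime$ is a union of components of $G\setminus V_c^\prime$). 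Hence every vertex of $G^*$ has outdegree $2$, and we obtain an Eulerian orientation of $G^*$.

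\textbf{Backward map.} Given an Eulerian orientation of $G^*$, keep the orientation on $E^\prime$. Replace each oriented virtual edge $a_w\to b_w$ by the path $a_w\to w\to b_w$. Degrees in $V^\prime$ are again preserved, and $w$ gets one in-edge and one out-edge from the side of $G^\prime$.

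\textbf{Main obstacle.} The key step is to show this is a \emph{valid} partial orientation, i.e.\ that it extends to an Eulerian orientation of all of $G$. For this I would argue componentwise. Deleting $G^\prime$ together with the cut vertices of $V_c^\prime$ leaves the other sides of these cut vertices. Each other side, with its attaching cut vertex contracted as above, is again an ARS whose regular representative is a connected Eulerian ($4$-regular) graph. That representative admits an Eulerian orientation, and any Eulerian orientation of a graph can be globally reversed. So we can choose one on the other side in which the virtual edge has the orientation needed to give $w$ its remaining in-edge and out-edge. Gluing these orientations gives outdegree $2$ at every vertex, including $w$.

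Care is needed when one cut vertex neighbours $V^\prime$ twice on the same side, or when cut vertices form cycles as in Lemma~\ref{lem:structure 4regular}. In those cases I would treat each component of $G\setminus V(G^\prime)$ with its boundary cut vertices as a single block, extend there, and reverse if necessary.

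Finally, the two maps are mutually inverse by construction: the path $a_w\to w\to b_w$ and the edge $a_w\to b_w$ determine each other, and the $E^\prime$ part is untouched. Together with well-definedness in both directions, this gives the bijection.
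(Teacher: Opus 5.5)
Your proposal is correct and follows the paper's proof. The bijection is the same: contract the directed path $a_w\to w\to b_w$ into the virtual edge $a_w\to b_w$, leave $E^\prime$ untouched, and invert by expanding the virtual edge back into the path. The one addition is your explicit check that the backward image extends to an Eulerian orientation of all of $G$, which the paper only asserts; your far-side argument (an Eulerian orientation of the other side's regular representative, expanded back through $w$) suffices, and the global reversal is not even needed, since either orientation of the far-side virtual edge already gives $w$ one incoming and one outgoing edge.
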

            
        \begin{proof}
            Let $\vec{E}$ be an Eulerian orientation of $G$.
            Then every vertex in $G^\prime$ that is not adjacent to a cut vertex has outdegree $2$ in this subgraph $G^\prime$ under $\vec{E}$.
            For the edges connecting $G^\prime$ to a cut vertex $u \notin V^\prime$, recall from Lemma~\ref{lem:cut} that exactly one edge is directed towards $u$ and the other is directed outwards $u$.
            Let $v_1, v_2 \in V^\prime$ be the endpoints of these edges in $G^\prime$ (note that $v_1 = v_2$ if they form a double edge).
            Suppose the edges are directed from $v_1$ to $u$, and from $u$ to $v_2$ in $\vec{E}$.
            Then $v_1$ has outdegree $1$ and $v_2$ has outdegree $2$ in $G^\prime$.
        
            This partial orientation of $G$ is equivalent to an Eulerian orientation $\vec{E}^*$ of $G^*$, defined as follows:
            \begin{itemize}
                \item[a.] every edge in $E^\prime$ has the same orientation as in $\vec{E}$ and
                \item[b.] for every triplet $v_1, v_2, u$ as above there is an edge $(v_1, v_2)\in E^*\setminus E^\prime$ directed from $v_1$ to $v_2$.
            \end{itemize}

            Conversely let $\vec{E^*}$ be an Eulerian orientation of $G^*$.
            Then $\vec{E^*}$ naturally extends to a unique partial Eulerian orientation of $G$ reversing the steps (a.) and (b.), establishing the bijection.
        \end{proof}
        
        Finally, the following lemma is used to count the orientations of edges where both endpoints are cut vertices.
        \begin{lemma}\label{lem:circle}
            Let $u_1, \dots u_r$ be cut vertices forming a simple cycle with edges $e_1=(u_1,u_2) \dots e_r=(u_r,u_1)$, or two cut vertices connected with $2$ parallel edges.
            Then, for every orientation for the rest of the graph, there are exactly two ways to orient these cycles.
        \end{lemma}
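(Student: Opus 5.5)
The plan is to exploit Lemma~\ref{lem:cut} at each cut vertex of the cycle, together with the fact that every cycle edge between two cut vertices separates the graph in a controlled way. First we fix notation. Let $u_1,\dots,u_r$ be the cut vertices on the cycle. For each $i$, removing $u_i$ leaves exactly two components by Observation~\ref{obs:hand}, and $u_i$ sends two edges into each. The two cycle edges $e_{i-1}=(u_{i-1},u_i)$ and $e_i=(u_i,u_{i+1})$ lie in the same component of $G\setminus u_i$: the rest of the cycle connects $u_{i-1}$ to $u_{i+1}$ avoiding $u_i$. They are therefore exactly the pair of edges of $u_i$ going into one side. The remaining two edges of $u_i$ go to the other side. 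That side contains no other cycle vertex, since the cycle lies entirely on one side.

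Next we apply Lemma~\ref{lem:cut} to each $u_i$ with respect to the component containing the cycle. Exactly one of $e_{i-1},e_i$ enters $u_i$ and the other leaves it. We do not need to condition on anything beyond this constraint: the two off-cycle edges of $u_i$ likewise contribute exactly one in and one out, so the Eulerian condition at $u_i$ holds precisely when the cycle edges at $u_i$ have one in and one out. Moreover, for any fixed orientation of all non-cycle edges that satisfies the balance conditions elsewhere, the constraints on the cycle edges decouple from the rest of the graph.

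The counting step then follows. Orienting the $r$ cycle edges so that every $u_i$ has one incoming and one outgoing cycle edge means the cycle is a directed cycle. Once the direction of $e_1$ is chosen, the condition at $u_2$ forces $e_2$, and so on around the cycle. Consistency at $u_1$ holds automatically, since going around returns to the same orientation. This gives exactly two choices, clockwise or counterclockwise, and both are compatible with any valid orientation of the rest.

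The case of two cut vertices joined by two parallel edges is handled identically. Here the two parallel edges form one side at each endpoint, so Lemma~\ref{lem:cut} forces one edge to go each way, which again gives exactly $2$ orientations.

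The main obstacle is justifying that the two cycle edges at $u_i$ go into the same component of $G\setminus u_i$. When $r\ge3$ this uses the simple cycle path avoiding $u_i$. When $r=2$ the two parallel edges trivially share the endpoint. We must also ensure that the two edges are not split one to each side, which would happen only if the cycle were not a cycle of $G\setminus u_i$. This is ruled out by the cycle's connectivity after deleting a single vertex.
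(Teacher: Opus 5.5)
Your proposal is correct and follows essentially the same route as the paper: Lemma~\ref{lem:cut} forces each $u_i$ to have one incoming and one outgoing cycle edge, so the cycle must be a directed cycle, which leaves exactly the clockwise and counter-clockwise choices. The only difference is cosmetic. The paper applies Lemma~\ref{lem:cut} to the off-cycle side and then uses balance at $u_i$, whereas you apply it directly to the cycle side (after checking that the two cycle edges at $u_i$ are that side's pair) and use the off-cycle side only to confirm that both choices are compatible with the rest.
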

        \begin{proof}
            Lemma~\ref{lem:cut} shows that after orienting all edges of a $2$-ARS, an edge incident to the cut vertex is directed outwards it.
            Therefore, within the cycle of cut vertices itself, every vertex has one edge already directed outwards and one edge directed towards it and needs exactly one additional incoming edge and one additional outgoing edge to satisfy the outdegree constraint.
            Thus, there are exactly two ways to obtain oudegree $2$ for all vertices, namely, by directing all cycle edges either clockwise or counter-clockwise. 
        \end{proof}
        
        \begin{algorithm}\label{algo:ARS}
            \caption{Compute orientations for separable graphs.}
        	\DontPrintSemicolon
        	\KwFn{countEulerian}\\
        	\KwInput{$G$ (separable $4$-regular graph)}
        	\KwOutput{\# of Eulerian orientations}
            \tcc{Step 1: Identify cut vertices and their induced subgraphs}
            $V_c \gets $ set of cut vertices of $G$ \;
            $V^0_c \gets $ cut vertices with degree $0$ in $G[V_c]$\;
            $V^2_c \gets $ cut vertices with degree $2$ in $G[V_c]$\;
            $G^* \gets G \setminus V_c$\; 
            \BlankLine
        
            \tcc{Step 2: Construct regular representatives of 2-ARSs connected via a cycle of cut vertices}
            \For{all  $v_c \in V^2_c$}{
                     $\{u_1,u_2\} \gets$  neighbors of $v_c$ in $V \setminus V_c$\;
                     $E(G^*) \gets E(G^*) \cup \{(u_1,u_2) \}$\;
        		}
        
            \tcc{Step 3: Construct regular representatives of 2-ARSs joined at a single cut vertex}
            \For{each $v_c \in V^0_c$}{
                $\{u_1, u_2, u_3, u_4\} \gets $ neighbors of $v_c$ in $G$\;
                $R_{temp} \gets $ the component in $\mathcal{R}$ containing $u_1$\;
                $u^* \gets $ the unique vertex in $\{u_2, u_3, u_4\} \cap V(R_{temp})$\;
                $\{w_1, w_2\} \gets \{u_2, u_3, u_4\} \setminus \{u^*\}$\;
                $E(G^*) \gets E(G^*) \cup \{(u_1, u^*), (w_1, w_2)\}$\;
            }
        
            \tcc{Step 4: Compute Eulerian orientations for each regular representative of $2$-ARS using the function \texttt{orient} that counts outdegree-constrained orientations.}
            
            $|\mathcal{E}| \gets 1$\;
            $\mathcal{R} \gets $ connected components of the updated $G^*$\;
            \For{all  $G_i^*\in \mathcal{R}$}{
                     $|\mathcal{E}| \gets |\mathcal{E}| \cdot \texttt{orient}(G^*_i)$\;
        		}
        
            \tcc{Step 5: Count the cycles of the subgraph induced by the cut vertices.}
            $s \gets $ \# of cycles in $G[V_c]$\\
            \Return($2^s \cdot |\mathcal{E}| $)
            
        \end{algorithm}

        We now have all the ingredients to prove the following theorem, which provides a closed formula to compute the number of Eulerian orientations of a separable connected $4$-regular graph, provided that the number of Eulerian orientations of the regular representatives of its $2$-ARSs is known.
        
        \begin{theorem}\label{th:formula}
            Let $G=(V,E)$ be a separable connected $4$-regular graph, and let $V_c$ denote its set of cut vertices.
            Let $s \geq 0$ be the number of cycles in the induced subgraph $G[V_c]$. 
            Let $G_1, G_2, \dots, G_r$ be the constituent $2$-ARSs of $G$, and let $G^*_1, G^*_2, \dots, G^*_r$ be their respective regular representatives. 
            Then:
            \begin{equation}\label{eq:formula}
                |\mathcal{E}(G)| = 2^s \cdot \prod_{i=1}^r |\mathcal{E}(G^*_i)|
            \end{equation}
        \end{theorem}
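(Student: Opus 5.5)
We prove the formula by exhibiting a bijection between $\mathcal{E}(G)$ and the product set $\{0,1\}^s\times\prod_{i=1}^r\mathcal{E}(G^*_i)$. The edge set of $G$ splits into three disjoint classes:
\begin{itemize}
\item[(a)] edges with both endpoints in some 2-ARS $G_i$ (i.e.\ $E(G_i)$);
\item[(b)] edges joining a cut vertex to a non-cut vertex;
\item[(c)] edges with both endpoints cut vertices, i.e.\ the edges of $G[V_c]$.
\end{itemize}
By Lemma~\ref{lem:structure 4regular}, the class (c) edges are exactly the edges of the $s$ cycles of $G[V_c]$. Here a double edge between two cut vertices counts as a cycle of length $2$, as in Lemma~\ref{lem:circle}.

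We first check that the 2-ARSs account for all non-cut vertices and edges correctly. Every non-cut vertex lies in exactly one $G_i$. Every class (b) edge $(u,v_c)$ is paired, through Observation~\ref{obs:hand}, with a second edge $(u',v_c)$ going into the same side of $v_c$. Each such pair is replaced in the regular representative by a single virtual edge $(u,u')$, as in Steps 2–3 of Algorithm~\ref{algo:ARS}.

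For a single cut vertex $v_c\in V^0_c$, both sides of $v_c$ consist of non-cut vertices adjacent to it. It therefore contributes one virtual edge to each of the two (possibly identical) 2-ARSs on its sides. For $v_c\in V^2_c$, one side is the cycle and the other is a 2-ARS, which receives one virtual edge.

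Next we define the map. Given $\vec E\in\mathcal{E}(G)$, restrict it to each $G_i$ together with its incident class (b) edges. Lemma~\ref{lem:cut} says each pair at a cut vertex is oriented one in, one out. Lemma~\ref{lem:ARS} then turns this restriction into an Eulerian orientation of $G^*_i$. For each cycle of $G[V_c]$, record whether it is oriented clockwise or counter-clockwise. Lemma~\ref{lem:cut} applied to the cycle side of each cycle vertex shows the cycle must be consistently oriented. This gives the map into the product set.

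For the inverse, start from Eulerian orientations of all $G^*_i$ and pull each back via Lemma~\ref{lem:ARS} to orientations of $E(G_i)$ and its class (b) edges. Then orient each cycle in one of the two ways permitted by Lemma~\ref{lem:circle}. We must verify the result is Eulerian at every vertex:
\begin{itemize}
\item \emph{Non-cut vertices:} they have outdegree $2$ by the bijection of Lemma~\ref{lem:ARS}.
\item \emph{Single cut vertices:} each receives one in and one out edge from each side, so it has outdegree $2$.
\item \emph{Cycle cut vertices:} each gets one out edge from its ARS side and one from the consistently oriented cycle, so it has outdegree $2$.
\end{itemize}
The choices for different $G^*_i$ and different cycles involve disjoint edge sets, so they are independent. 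The two maps are mutually inverse, which yields $|\mathcal{E}(G)|=2^s\prod_i|\mathcal{E}(G^*_i)|$.

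The main obstacle is justifying that the 2-ARSs are well defined and pairwise edge-disjoint, and that each virtual edge is attributed to exactly one $G^*_i$. This is delicate when an inner ARS is disconnected, as in Figure~\ref{fig:ARSDthree}. We would handle it by taking the 2-ARSs to be the connected components $\mathcal{R}$ of the graph $G^*$ constructed in Algorithm~\ref{algo:ARS}, i.e.\ $G\setminus V_c$ with the virtual edges added. These components partition $V\setminus V_c$ and the virtual edges. Each component is $4$-regular: a non-cut vertex loses one edge per incident class (b) edge and regains one per virtual edge. Its biconnectivity is needed only to match the definition of 2-ARS, not for the counting, which relies solely on Lemma~\ref{lem:ARS} applied componentwise.
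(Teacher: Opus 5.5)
Your bijection is the paper's argument written out in detail: Lemma~\ref{lem:ARS} for each $2$-ARS, a factor $2$ per cut-vertex cycle from Lemma~\ref{lem:circle}, and independence from Lemma~\ref{lem:cut}. However, the step that makes the three edge classes fit together does not hold in general. You assume that each class (b) edge $(u,v_c)$ has a class (b) partner on its side of $v_c$, and that the class (c) edges are exactly the edges of the $s$ cycles. Both rest on Lemma~\ref{lem:structure 4regular}, whose assertion that every vertex of $G[V_c]$ has degree $0$, $2$ or $4$ is false; an odd degree there does not produce a bridge.

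Take $A=K_5-ab$, $T=K_5-cd$, $R=K_5-yz$ and two new vertices $p,q$ joined by the edges $pa,pb,py,pq,qz,qc,qd$. This is a simple connected $4$-regular graph on $17$ vertices whose cut vertices are exactly $p$ and $q$. So $G[V_c]$ is the single edge $pq$ and $s=0$. On the side of $p$ containing $R$, the partner of $py$ is the class (c) edge $pq$. Hence:
\begin{itemize}
\item $y$ and $z$ stay of degree $3$ and share no cut-vertex neighbour, so $R$ is not an ARS in the sense of Definition~\ref{def:ARS};
\item the components you take from the algorithm are not $4$-regular (there $V^0_c=V^2_c=\emptyset$, so no virtual edge is added at all);
\item $pq$ lies on no cycle.
\end{itemize}
The only $2$-ARSs are $A$ and $T$, so the right-hand side of \eqref{eq:formula} equals $24^2$. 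On the other hand, Lemma~\ref{lem:cut} at $p$ and $q$ forces the path $y\,p\,q\,z$ to be directed consistently, which gives $|\mathcal{E}(G)|=24\cdot 24\cdot(12+12)=24^3$. So the real obstacle is not disconnected inner ARSs. It is two or more adjacent cut vertices inside a block that also contains non-cut vertices. The paper's own proof tacitly uses the same decomposition and has the same gap.

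To repair it, decompose along the blocks of $G$ instead of $G\setminus V_c$. A cut vertex lies in exactly two blocks with exactly two edges in each, since its two edges into one side of Observation~\ref{obs:hand} lie on a common cycle. A non-cut vertex has all four edges in a single block. By Lemma~\ref{lem:cut}, the Eulerian orientations of $G$ are precisely the independent choices, block by block, of orientations with outdegree $1$ at the block's cut vertices and $2$ at its other vertices.
\begin{itemize}
\item A block made only of cut vertices is a cycle of $G[V_c]$ and contributes a factor $2$.
\item In every other block the cut vertices form paths between non-cut vertices; by biconnectivity of the block, no such path returns to its starting vertex. Each path must be directed consistently, so replacing each maximal such path by a single edge gives a loopless $4$-regular representative with the same number of orientations.
\end{itemize}
When every such path has exactly one internal cut vertex, this is the paper's regular representative and your bijection goes through verbatim. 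In the example, the block $R\cup\{p,q\}$ becomes $K_5$, recovering $24^3$. Finally, your vertex-by-vertex check should also cover cut vertices of degree $4$ in $G[V_c]$, which get one out-edge from each of their two cycles.
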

        
        \begin{proof}
            According to Lemma~\ref{lem:ARS}, the valid partial orientations of every $2$-ARS map bijectively to the Eulerian orientations of its corresponding regular representative.
            Furthermore, by Lemma~\ref{lem:circle}, every cycle of cut vertices has exactly two possible orientations. 
            The choice of orientation in one $2$-ARS does not constrain the choice in another $2$-ARS or in $G[V_c]$ due to Lemma~\ref{lem:cut}, which ensures that every valid orientation of an ARS provides exactly one incoming and one outgoing edge at each incident cut vertex. 
            Thus, the $2$-ARSs are independent and vertex-disjoint, and Equation~\eqref{eq:formula} follows.
        \end{proof}
        
        Let us give an example that utilizes this formula to compute the number of orientations for a given graph.
        The regular representatives of the $2$-ARS of the graph $G_{30}$ (see Figure~\ref{fig:ARSexamples}) are two copies of $K_5$, two copies of the only $6$-vertex simple $4$-regular graph and one copy of the $\mathcal{D}_3$ graph.
        Additionally the induced subgraph of the cut vertices forms a single cycle.
        Since $|\mathcal{E}(K_5)|=24, |\mathcal{E}(G_6)|=38$ (see Appendix~\ref{app:simple_bi}) and $|\mathcal{E}(\mathcal{D}_3)|=10$ (see Theorem~\ref{th:multi_b}), we conclude that: $$|\mathcal{E} (G_{30})|=2\cdot 24^2 \cdot 38^2 \cdot 10= 16,634,880.$$
        
        Using this closed formula, a simple divide-and-conquer algorithm can be employed to compute the exact number of orientations for separable graphs without relying on exhaustive enumeration (see Algorithm~\ref{algo:ARS}).
        In this algorithm we compute separately the orientations for each regular representative of every $2$-ARS, using the function \texttt{orient} from~\cite{code_mBezout}.

        An implementation of this algorithm can be found in \cite{dataforpaper}.
        Using this implementation we computed the number of Eulerian orientations for large graphs with relatively small ARSs, e.g. $G_{30}$ (see Figure~\ref{fig:ARSexamples}) in less than 20 ms.
        By contrast, performing the computation for $G_{30}$ using the original algorithm from \cite{code_mBezout} required almost 2 hours on an Intel i3-3220 PC (3.3 GHz CPU, 4 GB RAM). 

    \subsection{A Conjecture on the upper bound for the Eulerian orientations of simple separable connected \texorpdfstring{$4$}{4}-regular graphs.}

        Now we are going to present a class of simple separable connected graphs that surpasses the number of Eulerian orientations of every simple biconnected graph with the same number of vertices.
        Recall from Table~\ref{tab:comps} and Appendix~\ref{app:simple_bi} that $|\mathcal{E}(K_5)|=24$, where $K_5$ is the unique simple $4$-regular graph with $5$ vertices.
    
        \begin{figure}[htp!]
            \centering
            \hthirtyfive
            \caption{
            The graph $H_{32} \in \mathcal{H}$ with $32$ vertices, which exactly attains the number of orientations established in Theorem~\ref{th:48_7}. 
            This graph contains $5$ ARSs, each with $K_5$ as its regular representative, along with $3$ cycles of cut vertices. 
            Consequently, $|\mathcal{E}(H_{32})|=24^5\cdot 2^3= 2^{-2}\cdot 48^{(32+3)/7}$. 
            }
            \label{fig:hexample}
        \end{figure}

        \begin{theorem}\label{th:48_7}
            There exists a class $\mathcal{H}$ of simple separable connected $4$-regular graphs such that for each $H_n \in \mathcal{H}$ with $n$ vertices:
            \begin{itemize}
                \item[(i)] $n\equiv 4 \pmod{7}$ and $n\geq 11$,
                \item[(ii)] $|\mathcal{E}(H_n)| = 2^{-2} \cdot 48^{(n+3)/7}$.
            \end{itemize}
        \end{theorem}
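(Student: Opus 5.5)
We construct $\mathcal{H}$ explicitly and then count with Theorem~\ref{th:formula}. The model is $H_{32}$ in Figure~\ref{fig:hexample}. Take $k\geq 1$ triangles $T_1,\dots,T_k$ of cut vertices, arranged as a chain in which consecutive triangles share exactly one vertex. The shared vertices then have degree $4$ in $G[V_c]$, and the remaining vertices have degree $2$. This structure $\mathcal{T}$ is connected and every biconnected component is a triangle. By Lemma~\ref{lem:count_c}(ii) with equality, $|V^4_{\mathcal{T}}|=k-1$ and $|V^2_{\mathcal{T}}|=k+2$, so $\mathcal{T}$ has $2k+1$ vertices and $s=k$ cycles by Lemma~\ref{lem:count_c}(i). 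At each of the $k+2$ degree-$2$ vertices we attach a copy of $K_5$ minus one edge, joining both endpoints of the deleted edge to that vertex. This is exactly the cycle gluing $\underset{(E',\mathcal{T})}{\otimes}\{K_5,\dots,K_5\}$ of Definition~\ref{def:glue_cycle}. The resulting graph $H$ is simple, because each $K_5$ is simple, the triangles are simple, and the attachment edges go to distinct vertices. It is $4$-regular, since every $K_5$ vertex keeps degree $4$, degree-$2$ cut vertices gain $2$, and degree-$4$ cut vertices already have $4$. Its vertex count is $n=(2k+1)+5(k+2)=7k+11$. Hence $n\equiv 4 \pmod 7$ and $n\geq 11$, which proves (i); the case $k=0$, a single $K_5$ glued at an isolated cut vertex, is handled separately below.

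Next we identify the structure that Theorem~\ref{th:formula} needs. We check that the cut vertices of $H$ are exactly the $2k+1$ vertices of $\mathcal{T}$. Each vertex of $\mathcal{T}$ is a cut vertex, since it separates a pendant $K_5^-$ or a branch of the chain. No $K_5$ vertex is a cut vertex, because $K_5$ minus an edge is $2$-connected and each attached vertex leads only back through the cut vertex. So $G[V_c]=\mathcal{T}$ has $s=k$ cycles. The $2$-ARSs are the $k+2$ copies of $K_5^-$, each with regular representative $K_5$; these are all boundary ARSs, and no inner ARS remains because $V\setminus V_c$ is exactly the union of the $K_5^-$ copies. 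Theorem~\ref{th:formula} together with $|\mathcal{E}(K_5)|=24$ then gives
\[
|\mathcal{E}(H)|=2^{k}\cdot 24^{k+2}.
\]

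The remaining step is arithmetic. Substituting $k=(n-11)/7$ gives $(n+3)/7=k+2$. Therefore
\[
2^{-2}\cdot 48^{(n+3)/7}=2^{-2}\cdot 2^{k+2}\cdot 24^{k+2}=2^k\cdot 24^{k+2},
\]
which proves (ii). For $H_{32}$ ($k=3$) this gives $2^3\cdot 24^5$, matching the figure.

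For $n=11$ the construction is two $K_5^-$ copies glued at a single isolated cut vertex, the simple gluing of Definition~\ref{def:glue_simple}. Here $s=0$ and $|\mathcal{E}|=24^2=576=2^{-2}\cdot 48^2$, in agreement with Table~\ref{tab:comps}. So for $n=11$ we use $k=0$ with the simple-gluing construction, and for $n\geq 18$ we use the triangle chain.

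The main obstacle is not the count itself but the structural verification: the ARS decomposition and the cycle count must be exactly as claimed. In particular, Theorem~\ref{th:formula} must apply with no extra inner ARS and with precisely $k$ cycles. To settle this we argue directly from Observation~\ref{obs:hand}: each cut vertex has exactly two edges into each side, so the components of $H\setminus V_c$ are exactly the $K_5^-$ copies.
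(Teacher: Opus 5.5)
Your proof is correct and follows the paper's argument: the paper likewise takes $H_{11}$ as the simple gluing of two copies of $K_5$ and, for larger $n$, cycle-glues copies of $K_5$ onto a connected graph $\mathcal{T}$ all of whose blocks are triangles, then counts vertices via Lemma~\ref{lem:count_c} and orientations via Theorem~\ref{th:formula}. The only differences are that you restrict $\mathcal{T}$ to a chain of triangles, which is a special case of the paper's family $\mathcal{W}$, and that you check the cut-vertex set and the $2$-ARS decomposition more explicitly than the paper does.
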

        
        \begin{proof}
            Let $H_n$ be a simple separable graph containing only boundary $2$-ARSs.
            Suppose the regular representative for each $2$-ARS is the complete graph $K_5$.
            Let $H_{11}= \odot\{K^1_5, K^2_5\}$, with $K^1_5, K^2_5$ being isomorphic to $K_5$.
            This means that the number of vertices of $H_{11}$ is $11$ (5 for each ARS and 1 cut vertex), so
        
            $$ |\mathcal{E}(H_{11})| =24^2 = 48^{2}\cdot 2^{-2} = 48^{(11+3)/7} \cdot 2^{-2}. $$
        
            Now, let $\mathcal{W}$ be a family of connected graphs wherein every biconnected component is a cycle of length $3$.
            For any $\mathcal{T} \in \mathcal{W}$, let $V^2_{\mathcal{T}}$ and $V^4_{\mathcal{T}}$ denote the sets of degree-$2$ and degree-$4$ vertices in $\mathcal{T}$, respectively.
            Let also $\{K^1_5, \dots, K^{|V^2_{\mathcal{T}}|}_5 \}$ be a set of $|V^2_{\mathcal{T}}|$ graphs, each isomorphic to $K_5$, and let $E_{K_5}$ be a set containing exactly one edge selected from each $K^i_5$.
            Then $H_n=\underset{(E_{K_5},\mathcal{T})}{\otimes} \{K^1_5, \dots K^{|V^2_{\mathcal{T}}|}_5 \}$ is a graph with $n$ vertices constructed via cycle gluing.
            
            By Lemma~\ref{lem:count_c}, we know that $|V^4_{\mathcal{T}}| = |V^2_{\mathcal{T}}| - 3$.
            Additionally, let $V_b$ denote the set of vertices belonging strictly to the $2$-ARSs.
            Since each degree-$2$ vertex in $\mathcal{T}$ is connected to a uniquely corresponding $2$-ARS (which retains $5$ vertices from $K_5$), we have $|V_b| = 5\cdot |V^2_{\mathcal{T}}|$.
            
            Thus, the total number of vertices $n$ is:
        
            $$ n = |V_b| + |V^2_{\mathcal{T}}| + |V^4_{\mathcal{T}}| = 5|V^2_{\mathcal{T}}| + |V^2_{\mathcal{T}}| + (|V^2_{\mathcal{T}}| - 3) = 7\cdot |V^2_{\mathcal{T}}| - 3. $$
            Solving for $|V^2_{\mathcal{T}}|$ yields $|V^2_{\mathcal{T}}| = (n+3)/7$.
            
            Applying Theorem~\ref{th:formula} and recalling from Lemma~\ref{lem:count_c}(i) that the number of cycles $s = |V^4_{\mathcal{T}}| + 1$, we obtain:
        
            \begin{align*}
            	|\mathcal{E}(H_n)| &= 24^{|V^2_{\mathcal{T}}|} \cdot 2^{|V^4_{\mathcal{T}}|+1}\\
               &= 24^{|V^2_{\mathcal{T}}|} \cdot 2^{|V^2_{\mathcal{T}}|-2} \\
               & = 2^{-2} \cdot 48^{(n+3)/7}.
            \end{align*}
            concluding the proof for (ii).

            By the construction of this class of graphs it is clear that the number $(n+3)/7$ shall be a natural number bigger than $2$ (at least 2 copies of $K_5$), thus (i) follows. 
        \end{proof}

        Even if the computational results (see Table~\ref{tab:comps}), demonstrate that the maximum number of orientations is bigger for separable graphs for $n\leq 15$, notice that for a small number of vertices the number of Eulerian orientations for these graphs does not surpass the bound for simple biconnected graphs (see, for example, Figure~\ref{fig:hexample}).
        However, for $n \geq 116$, every graph in $\mathcal{H}$ strictly surpasses the biconnected bound established in Theorem~\ref{th:bi}.
        
        Based on Theorem~\ref{th:48_7} and several examples we have investigated (not only including the exhaustive computations for graphs with $n\leq 15$), we propose the following conjecture regarding the true upper bound for simple graphs.
        
        \begin{conjecture}\label{con:simple}
            Let $G=(V,E)$ be a simple separable connected $4$-regular graph with $n$ vertices.
            Then, 
            $$ |\mathcal{E}(G)| \leq 2^{-2}\cdot 48^{(n+3)/7}. $$
            This bound behaves asymptotically as $\mathcal{O}(1.7386^n)$.
        \end{conjecture}
        
    \section{Graph construction operations and their effect on the number of orientations}\label{sec:constr}

        In this section, we introduce the established graph operations that construct $4$-regular graphs and multigraphs from smaller ones, and we investigate their impact on the number of Eulerian orientations in cases where it is possible to derive a closed formula.

    \subsection{Construction operations for simple connected \texorpdfstring{$4$}{4}-regular graphs}

        The generation of the class of simple $4$-regular graphs is built upon $K_5$. 
        Any graph within this class can be constructed starting from $K_5$ through a finite sequence of three specific operations (see Figure~\ref{fig:sconstr}):

        \begin{enumerate}
            \item[$\mathcal{S}_1$:] 
            Let $(u_1, u_2)$ and $(u_3, u_4)$ be two  edges with no common endpoints in the graph. 
            The operation consists of removing 2 edges $(u_1, u_2)$ and $(u_3, u_4)$ and consequently adding a new vertex $v$ connecting it to the four endpoints of the removed edges by adding the edges $(u_1,v), (u_2,v), (u_3,v)$ and $(u_4,v)$. 
            This operation increases the number of the vertices of the graph by $1$.
        
            \item[$\mathcal{S}_2$:]  
            Let $v$ be a vertex in the graph with neighbors $u_1, u_2, u_3,$ and $u_4$.
            This operation replaces the vertex $v$ with four new vertices $v_1, v_2, v_3, v_4$ such that the induced subgraph $G[\{v_1,v_2,v_3,v_4\}]$ is isomorphic to $K_4$.
            These vertices are connected with the former neighbors of $v$ by adding the edges $(u_1, v_1),(u_2, v_2),(u_3, v_3)$ and $(u_4, v_4)$. 
            This operation increases the number of vertices of the graph by 3.
            
            \item[$\mathcal{S}_3$:]  
            Let $(u_1,u_2)$ be an edge in the graph. 
            This edge is removed and $5$ vertices $v_1, v_2, v_3, v_4, v_5$ are added that form a complete graph $K_5$ without the edge $(v_1, v_2)$.
            This subgraph is connected with the endpoints of the deleted edge by adding the edges $(u_1, v_1), (u_2, v_2)$.
            This operation increases the number of the vertices of the graph by 5.
        \end{enumerate}
        
        The sufficiency of these three operations to generate every $4$-regular graph from $K_5$ is established by the following theorem.
        
        \begin{theorem}[\cite{Bories1983}]
            All simple and  connected $4$-regular graphs are generated from $K_5$ by a finite number of the three operations $\mathcal{S}_1$, $\mathcal{S}_2$, and $\mathcal{S}_3$.
        \end{theorem}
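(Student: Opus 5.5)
The plan is to prove the statement by induction on $n=|V(G)|$. The base case is $K_5$, the unique simple $4$-regular graph on $5$ vertices. For the inductive step it suffices to show the following: every simple connected $4$-regular graph $G\neq K_5$ contains a configuration to which the \emph{inverse} of one of $\mathcal{S}_1,\mathcal{S}_2,\mathcal{S}_3$ can be applied, and the result is again a simple connected $4$-regular graph $G'$ with fewer vertices. Since $G'$ is generated from $K_5$ by the induction hypothesis, applying the forward operation gives $G$.

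I will try the three inverse operations in order.

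\emph{Inverse $\mathcal{S}_1$.} Take a vertex $v$ with neighbours $u_1,\dots,u_4$. Delete $v$ and add the two edges of a pairing $\{u_1u_2,u_3u_4\}$.
\begin{itemize}
\item For simplicity of $G'$, both added pairs must be non-edges of $G$. That is, some perfect matching of $K_4$ on $N(v)$ must avoid $G[N(v)]$.
\item For connectivity of $G'$, recall that by Observation~\ref{obs:hand}, if $v$ is a cut vertex then $G\setminus v$ has exactly two components, each receiving two edges from $v$. In that case the pairing must join the two sides. Pairs across the two components are automatically non-edges, so such a pairing always exists and $G'$ is connected and simple.
\item Hence inverse $\mathcal{S}_1$ fails at every vertex only if $G$ is biconnected (in fact $3$-connected is not needed) and, for every $v$, the graph $H_v=G[N(v)]$ meets all three perfect matchings of $K_4$. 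On four vertices this forces $H_v$ to contain a triangle, a $K_{1,3}$, or a $P_4$. In particular every edge $vw$ lies in many triangles.
\end{itemize}

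\emph{Inverse $\mathcal{S}_2$ and $\mathcal{S}_3$.} The second part of the argument exploits this local density. Assuming no vertex admits a valid inverse $\mathcal{S}_1$, I will show that $G$ contains one of the following, or else $G=K_5$:
\begin{itemize}
\item a $K_4$ whose four vertices have four distinct outside neighbours, which we contract to a single vertex (inverse $\mathcal{S}_2$); or
\item a copy of $K_5$ minus an edge $v_1v_2$, attached to the rest of $G$ by single edges $u_1v_1$ and $u_2v_2$ with $u_1\neq u_2$ and $u_1u_2\notin E$, which we replace by the edge $u_1u_2$ (inverse $\mathcal{S}_3$).
\end{itemize}

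To reach this configuration I will start from a vertex $v$ whose neighbourhood contains a triangle, so that $v$ lies in a $K_4$ $Q$. Then I apply the failure condition at the vertices of $Q$ to their outside neighbours. This should force one of three outcomes:
\begin{itemize}
\item the outside neighbours of $Q$ are distinct, giving inverse $\mathcal{S}_2$;
\item two vertices of $Q$ share an outside neighbour, which grows $Q$ into a $K_5-e$, and this is either all of $G$ with one more vertex (i.e.\ $K_5$) or attached by two edges, giving inverse $\mathcal{S}_3$;
\item the neighbourhoods are $K_{1,3}$ or $P_4$ without triangles; I will separately argue this is impossible by counting degrees around $v$.
\end{itemize}

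The degenerate subcases of inverse $\mathcal{S}_3$ need care. If $u_1=u_2$, or if $u_1u_2$ is already an edge, I plan to show that a different vertex of the configuration (for instance $u_1$ itself) then admits an inverse $\mathcal{S}_1$, contradicting the assumption. The same approach handles the analogous coincidences among the outside neighbours of a $K_4$.

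The main obstacle is this structural case analysis: the local condition ``$H_v$ meets all three perfect matchings for every $v$'' must propagate to a global conclusion ($K_4$ or $K_5-e$ blocks with the right attachments). I would first try to prove that some vertex has a triangle in its neighbourhood, and then grow cliques greedily, checking each coincidence case by hand.
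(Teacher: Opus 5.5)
The paper does not prove this theorem (it is quoted from Bories et al.), so your argument has to stand on its own. The reverse-induction strategy is sound. Your treatment of inverse $\mathcal{S}_1$ is also correct: a cut vertex is always reducible, and otherwise you need a perfect matching of $N(v)$ avoiding $E(G)$. Two local corrections, though.

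First, the failure condition is exactly ``$H_v$ contains a triangle or $H_v\cong K_{1,3}$''. $P_4$ does not qualify, since its complement is again $P_4$ and contains a perfect matching. The $K_{1,3}$ case is impossible: for a leaf $b$ of the star centred at $a$, with other neighbours $x_1,x_2$, the edges of $H_b$ lie in $\{va,\,x_1x_2\}$. So every vertex lies in a $K_4$. Your remark that every edge lies in many triangles is false; the attaching edges of a $K_5-e$ lie in none.

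Second, two vertices of $Q$ sharing an outside neighbour $x$ does \emph{not} grow $Q$ into $K_5-e$. If exactly two (or two pairs) share, you get $K_5$ minus two edges at $x$. This case must be killed by inverse $\mathcal{S}_1$ at $x$, whose neighbourhood graph is contained in a matching. Only three vertices sharing $x$ yields $K_5-e$.

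The genuine gap is the degenerate $\mathcal{S}_3$ subcase. Take the block $\{v_1,v_2,v_3,v_4,x\}\cong K_5-v_4x$ with attachment vertices $y$ (neighbour of $v_4$) and $z$ (neighbour of $x$), and suppose $yz\in E$. You promise a contradiction via an inverse $\mathcal{S}_1$ nearby, but none need exist. Build the following graph:
\begin{itemize}
\item $B_1$: that block;
\item $B_2\cong K_5-rs$ on $\{w_1,w_2,w_3,r,s\}$;
\item a $K_4$ on $\{y,z,p,q\}$;
\item the edges $v_4y$, $xz$, $pr$, $qs$.
\end{itemize}
This is a simple $2$-connected $4$-regular graph on $14$ vertices. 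Every $H_v$ contains a triangle, so no inverse $\mathcal{S}_1$ applies. Both blocks have adjacent attachment vertices, so no inverse $\mathcal{S}_3$ applies. The $K_4$'s inside the blocks have repeated outside neighbours. The only reduction is inverse $\mathcal{S}_2$ on $\{y,z,p,q\}$. If your search starts at $v_1$, it lands exactly in the subcase where the promised contradiction is absent.

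The missing step is as follows. Since $v_4$ is isolated in $H_y$, failure at $y$ forces $N(y)\setminus\{v_4\}=\{z,p,q\}$ to be a triangle. Hence $\{y,z,p,q\}$ is a $K_4$ with outside neighbours $v_4,x,r,s$. Here $v_4\ne x$, and $r,s\notin\{v_4,x\}$ because $N(v_4)$ and $N(x)$ are already full. If $r=s$, you are in the two-sharing case (inverse $\mathcal{S}_1$ at $r$). Otherwise inverse $\mathcal{S}_2$ applies to this new $K_4$. Together with the cases $y=z$ (a cut vertex, hence reducible) and four vertices sharing ($G=K_5$), this closes the case analysis.
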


        \begin{figure}	
            \begin{center}
                \begin{tabular}{ccc}
			
                    \SI \hspace{2mm}	& \hspace{2mm} \SII  & \hspace{2mm} \SIII \\

                \end{tabular} 
            \end{center}
            \caption{The three construction operations for simple connected $4$-regular graphs \cite{Bories1983}.}
            \label{fig:sconstr}
        \end{figure}

        The question is how the Eulerian orientations of a graph $G^*$ relate to those of a graph $G$ obtained from $G^*$ by applying one of these operations once. 
        We provide closed formulas for operations $\mathcal{S}_2$ and $\mathcal{S}_3$.
        
        \begin{theorem}
            Let $G^*$ and $G$ be two simple $4$-regular graphs such that $G$ is generated by $G^*$ after applying operation $\mathcal{S}_2$ once. 
            Then $|\mathcal{E}(G)|=4|\mathcal{E}(G^*)|$.
        \end{theorem}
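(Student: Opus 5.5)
The plan is to compare the two graphs locally at the replaced vertex. Eulerian orientations of $G$ and of $G^*$ agree on every edge outside the gadget, so it suffices to show that each admissible orientation of the boundary extends in exactly four ways inside $G$. Concretely, identify the edge $(u_i,v)$ of $G^*$ with the edge $(u_i,v_i)$ of $G$ for $i=1,\dots,4$. Every other edge of $G^*$ is literally an edge of $G$. This gives a natural correspondence between orientations of $E(G^*)$ and orientations of $E(G)\setminus E(K_4)$, where $K_4=G[\{v_1,v_2,v_3,v_4\}]$.

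First I will show that the boundary constraints coincide, by the same counting argument as in Lemma~\ref{lem:cut}.
\begin{itemize}
\item In any Eulerian orientation of $G$, the four vertices $v_1,\dots,v_4$ have total outdegree $8$.
\item The six internal edges of $K_4$ contribute exactly $6$ to this total.
\item Hence exactly two of the four connecting edges $(u_i,v_i)$ are directed out of $\{v_1,\dots,v_4\}$ and two are directed into it.
\end{itemize}
This is precisely the condition that $v$ has outdegree $2$ in $G^*$. The vertices $u_i$ and all other vertices see identical local situations in $G$ and in $G^*$. So restricting an Eulerian orientation of $G$ to $E(G)\setminus E(K_4)$ gives, via the identification, an Eulerian orientation of $G^*$. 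Conversely, every Eulerian orientation of $G^*$ arises in this way provided the orientation of $K_4$ can be completed. It therefore remains to count the completions for a fixed boundary pattern.

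Fix the boundary pattern: two vertices, say $a,b$, receive an incoming connecting edge, and the other two, $c,d$, have an outgoing connecting edge. Then $a$ and $b$ need outdegree $2$ inside $K_4$, while $c$ and $d$ need outdegree $1$ inside $K_4$. So we must count orientations of $K_4$ with prescribed outdegree sequence $(2,2,1,1)$ on $(a,b,c,d)$. By the symmetry of $K_4$, this number does not depend on which pair carries the $2$'s.

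I will count by a direct case analysis on the edge $ab$.
\begin{itemize}
\item Suppose $a\to b$. Then $b$ must orient both $b\to c$ and $b\to d$. Next, $a$ chooses exactly one of $a\to c$ or $a\to d$.
\item If $a\to c$, then $c$ needs its single out-edge on $cd$, giving $c\to d$, and then $d\to a$.
\item If $a\to d$, then symmetrically $d\to c$ and $c\to a$.
\item Each of these choices is checked to satisfy all four outdegree constraints, so $a\to b$ gives $2$ completions.
\item The case $b\to a$ is symmetric and gives another $2$ completions.
\end{itemize}
In total there are $4$ completions for every boundary pattern, which yields $|\mathcal{E}(G)|=4|\mathcal{E}(G^*)|$.

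The only step requiring care is the first one: verifying that the correspondence is a bijection on the outer part. This amounts to confirming that no constraint outside the gadget changes and that the balance argument forces exactly two-in/two-out. The $K_4$ count itself is a small finite check.
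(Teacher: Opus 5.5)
Your proposal is correct and follows essentially the same route as the paper: the outdegree-sum argument on $\{v_1,\dots,v_4\}$ forces two incoming and two outgoing connecting edges, which gives the correspondence with Eulerian orientations of $G^*$, and each boundary pattern then extends to exactly four orientations of $K_4$ with outdegree sequence $(2,2,1,1)$. Your explicit case analysis on the edge $ab$ establishes that count of four directly, where the paper instead points to a figure.
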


        \begin{proof}
            Let $v \in V(G^*)$ be the replaced vertex, and let $\{v_1, v_2, v_3, v_4\}$ form the $K_4$ graph in $G$ such that each $v_i$ connects to a distinct neighbor $u_i$ of $v$. 
            
            In any Eulerian orientation of $G$, the sum of the outdegrees of the vertices $v_1, v_2, v_3, v_4$ must be exactly $4 \cdot 2 = 8$. 
            Since the edges of the $K_4$ contribute exactly $6$ to this sum, exactly two of the edges $(u_i, v_i)$ must be directed towards the vertices of $K_4$, and two must be directed outwards them. 
            Therefore, contracting the $K_4$ back into the single vertex $v$ naturally yields a valid Eulerian orientation of $G^*$, where the edges belonging to both $G^*$ and $G$ preserve their orientation, and an edge $(u_i,v)$ in $G^*$ is directed towards (outwards) $v$ if and only if the corresponding edge $(u_i,v_i)$ in $G$ is directed towards (outwards) $v_i$.
            This means that every Eulerian orientation in $\mathcal{E}(G)$ corresponds to an Eulerian orientation in $\mathcal{E}(G^*)$.
            
            Conversely, consider any Eulerian orientation in $\mathcal{E}(G^*)$. 
            The vertex $v$ has exactly two incoming and two outgoing edges. 
            When extending this orientation to $G$, the edges $(u_i, v_i)$ directly inherit these directions. 
            To make the respective orientation of $G$ Eulerian, the edges of the $K_4$ must be oriented such that the two vertices of $K_4$ where the respective edges $(u_i,v_i)$ have been oriented towards (outwards) $v_i$ have two outgoing (incoming) edges from the edges of $K_4$.
            It is easily verified (see Figure~\ref{fig:kfour}) that a $K_4$ has exactly $4$ valid orientations satisfying this specific outdegree constraint (two vertices with an outdegree of $2$ and two vertices with an outdegree of $1$). 
            Consequently, every orientation in $\mathcal{E}(G^*)$ extends to exactly $4$ distinct Eulerian orientations in $\mathcal{E}(G)$.
            
            Combining both directions implies that there is a $4$-to-$1$ correspondence between $\mathcal{E}(G)$ and $\mathcal{E}(G^*)$.
            Thus, we conclude that $|\mathcal{E}(G)| = 4|\mathcal{E}(G^*)|$.
        
        \end{proof}

        \begin{figure}[htp!]
            \begin{center}
                \Kfourorient
                \caption{The four valid orientations of the $K_4$ subgraph, such that vertices $v_1$ and $v_2$ attain an outdegree of $1$, while vertices $v_3$ and $v_4$ attain an outdegree of $2$.}
            \label{fig:kfour}
            \end{center}
        \end{figure}

        \begin{corollary}
            The number of orientations for the class of simple $4$-regular graphs constructed by only $\mathcal{S}_2$ moves starting from $K_5$ is exactly
            $$ 24 \cdot 4^{(n-5)/3} = 6 \cdot 4^{(n-2)/3}. $$
            More generally, given a simple $4$-regular graph $G^*$ with $m$ vertices, any graph $G$ with $n$ vertices constructed solely via operation $\mathcal{S}_2$ starting from $G^*$ has 
            $$ |\mathcal{E}(G)| = |\mathcal{E}(G^*)| \cdot 4^{(n-m)/3} $$
            Eulerian orientations. 
            This bound grows asymptotically as $\mathcal{O}\left(1.5875^n\right)$.
        \end{corollary}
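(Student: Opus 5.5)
The corollary follows by iterating the preceding theorem, which says that a single application of $\mathcal{S}_2$ multiplies the number of Eulerian orientations by exactly $4$. The plan is to set up an induction on the number $k$ of $\mathcal{S}_2$ moves performed. The only bookkeeping needed is the vertex count: each $\mathcal{S}_2$ move replaces one vertex by four, so it adds exactly $3$ vertices. Hence after $k$ moves starting from a graph $G^*$ with $m$ vertices we have $n = m + 3k$, i.e.\ $k = (n-m)/3$. In particular $n \equiv m \pmod 3$ for every graph in the class, so the exponent is always a nonnegative integer.

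The induction runs as follows. The base case $k=0$ is trivial, since then $G=G^*$. For the inductive step, let $G_k$ be the graph obtained after $k$ moves. Note that $G_k$ is again a simple $4$-regular graph, because $\mathcal{S}_2$ preserves simplicity and $4$-regularity. Applying the previous theorem to the pair $(G_k, G_{k+1})$ gives $|\mathcal{E}(G_{k+1})| = 4|\mathcal{E}(G_k)|$. Unfolding the recursion yields
\[
|\mathcal{E}(G)| = 4^k\,|\mathcal{E}(G^*)| = |\mathcal{E}(G^*)|\cdot 4^{(n-m)/3}.
\]

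For the specialization, take $G^*=K_5$ with $m=5$ and use $|\mathcal{E}(K_5)|=24$, which is recorded in Table~\ref{tab:comps} and can be checked directly. This gives $24\cdot 4^{(n-5)/3}$. Rewriting $24 = 6\cdot 4 = 6\cdot 4^{3/3}$ turns this into $6\cdot 4^{(n-2)/3}$.

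For the asymptotics, observe that $4^{n/3} = (4^{1/3})^n$ and $4^{1/3}\approx 1.5874$, so the count is $\mathcal{O}(1.5875^n)$, the constant factor $|\mathcal{E}(G^*)|\cdot 4^{-m/3}$ being independent of $n$. There is no real obstacle here. The only point requiring care is to confirm that the hypotheses of the previous theorem, namely that both graphs are simple and $4$-regular, remain valid at every intermediate stage, which is immediate from the definition of $\mathcal{S}_2$.
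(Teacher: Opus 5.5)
Your proposal is correct and follows the same route as the paper, which states the corollary without a separate proof. The intended argument is exactly yours: iterate the preceding theorem ($|\mathcal{E}(G)|=4|\mathcal{E}(G^*)|$ per $\mathcal{S}_2$ move), note that each move adds $3$ vertices so $k=(n-m)/3$, and specialize to $|\mathcal{E}(K_5)|=24$, with the identity $24\cdot 4^{(n-5)/3}=6\cdot 4^{(n-2)/3}$ and the growth rate $4^{1/3}\approx 1.5874$ checking out as you wrote them.
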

        
        Before deriving a closed formula for operation $\mathcal{S}_3$, we must establish the number of Eulerian orientations given the fixed orientation of a single edge. 
        Given a $4$-regular graph or multigraph $G$ and a subset of its edges $E^\prime$, we denote by $\mathcal{E}(G \mid \overrightarrow{E^\prime})$ the set of Eulerian orientations of $G$ subject to a fixed orientation $\overrightarrow{E^\prime}$ of these edges.

        \begin{lemma}\label{lemma-oriented-edge}
            For every $4$-regular graph or multigraph $G$ and an edge $e$ $$|\mathcal{E}(G\mid \overrightarrow{\{e\}})|=\frac{|\mathcal{E}(G)|}{2}$$
        \end{lemma}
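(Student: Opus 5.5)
The natural approach is the reversal involution. Let $\rho:\mathcal{E}(G)\to\mathcal{E}(G)$ send an Eulerian orientation $\vec{G}$ to the orientation obtained by reversing every edge. At each vertex, indegree and outdegree are swapped. Since they were equal (both $2$), the reversed orientation is again Eulerian, so $\rho$ is well defined. It is clearly an involution, $\rho\circ\rho=\mathrm{id}$, and hence a bijection of $\mathcal{E}(G)$ onto itself.

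Fix the edge $e$ with endpoints $x,y$. We partition $\mathcal{E}(G)$ into two classes: $\mathcal{E}(G\mid x\to y)$, where $e$ points from $x$ to $y$, and $\mathcal{E}(G\mid y\to x)$, where it points the other way. These are disjoint and exhaust $\mathcal{E}(G)$, because every orientation assigns exactly one direction to $e$. The involution $\rho$ maps the first class into the second and the second into the first. Being its own inverse, its restriction is a bijection between the two classes, so they have equal cardinality. Therefore each class has exactly $|\mathcal{E}(G)|/2$ elements, which is the claim for either prescribed orientation $\overrightarrow{\{e\}}$.

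The only point needing care is the multigraph case. If $e$ has parallel copies, the edge multiset still treats each copy as a distinct element with its own direction, and reversal acts on each copy individually. Fixing the direction of the particular copy $e$ therefore splits $\mathcal{E}(G)$ exactly as above, and the argument goes through verbatim. Note also that when $\mathcal{E}(G)$ is nonempty (always, for $4$-regular graphs) the count $|\mathcal{E}(G)|$ is automatically even, consistent with the statement.

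There is no real obstacle here; the main thing to state explicitly is that reversal preserves the Eulerian property and flips $e$, so no fixed points occur.
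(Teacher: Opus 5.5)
Your proposal is correct and follows essentially the same argument as the paper. The paper also reverses every edge of an Eulerian orientation to get a bijection between the orientations with $e$ directed one way and those with $e$ directed the other way, and notes that these two classes partition $\mathcal{E}(G)$.
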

        
        \begin{proof}
            Let $e=(u,v)$. 
            There is a $1$-to-$1$ correspondence between the orientations in which $e$ is directed from $u$ to $v$ and the orientations in which $e$ is directed from $v$ to $u$. 
            This correspondence is naturally obtained by reversing the direction of all edges in any valid Eulerian orientation of $G$. 
            Since these two sets form a perfect partition of $\mathcal{E}(G)$, it implies that $|\mathcal{E}(G \mid \overrightarrow{\{e\}})| = |\mathcal{E}(G)| / 2$.
        \end{proof}

        Let us now prove the theorem for the third operation.
        \begin{theorem}
            Let $G$ and $G^*$ be two simple $4$-regular graphs such that $G$ is generated by $G^*$ after applying  operation $\mathcal{S}_3$ once. Then $|\mathcal{E}(G)|=12|\mathcal{E}(G^*)|$.
        \end{theorem}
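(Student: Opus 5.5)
The plan is to reduce the statement to Lemma~\ref{lemma-oriented-edge} applied to $K_5$, following the same contraction argument used for operation $\mathcal{S}_2$. Let $(u_1,u_2)$ be the edge of $G^*$ removed by $\mathcal{S}_3$. Let $H$ be the subgraph of $G$ induced by $\{v_1,\dots,v_5\}$, which is $K_5$ minus the edge $(v_1,v_2)$, joined to the rest of $G$ only through the edges $(u_1,v_1)$ and $(v_2,u_2)$.

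\textbf{Step 1: boundary edges.} In any Eulerian orientation of $G$, the outdegrees of $v_1,\dots,v_5$ sum to $10$. The $9$ internal edges of $H$ contribute exactly $9$ to this sum. Hence exactly one of the two boundary edges $(u_1,v_1)$, $(v_2,u_2)$ is directed out of $H$, and the other is directed into $H$. This is the same counting as in Lemma~\ref{lem:cut}. So the orientation either passes through the gadget as $u_1\to v_1,\ v_2\to u_2$, or in the reverse direction.

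\textbf{Step 2: map from $\mathcal{E}(G)$ to $\mathcal{E}(G^*)$.} Contract $H$ back to the edge $(u_1,u_2)$ and direct it from $u_1$ to $u_2$ in the first case of Step 1, and from $u_2$ to $u_1$ in the second. All other edges keep their orientation. The outdegrees of $u_1$ and $u_2$ are unchanged, so the result is an Eulerian orientation of $G^*$.

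\textbf{Step 3: size of each fibre.} Fix an orientation in $\mathcal{E}(G^*)$; by symmetry assume $u_1\to u_2$. Its extensions to $G$ are exactly the orientations of the internal edges of $H$ such that:
\begin{itemize}
\item[a.] $v_1$, which receives the edge from $u_1$, has internal outdegree $2$;
\item[b.] $v_2$, which sends its edge to $u_2$, has internal outdegree $1$;
\item[c.] $v_3,v_4,v_5$ have internal outdegree $2$.
\end{itemize}
Now add back the edge $(v_1,v_2)$ directed $v_2\to v_1$, so that it plays the role of the path $v_2\to u_2 \leadsto u_1\to v_1$. Under this identification the admissible internal orientations correspond bijectively to the Eulerian orientations of $K_5$ in which $(v_1,v_2)$ has the fixed direction $v_2\to v_1$. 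By Lemma~\ref{lemma-oriented-edge}, there are $|\mathcal{E}(K_5)|/2=24/2=12$ of these. The case $u_2\to u_1$ is symmetric, by reversing all edges.

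\textbf{Step 4: conclusion.} Every element of $\mathcal{E}(G^*)$ therefore has exactly $12$ preimages under the map of Step 2, which gives $|\mathcal{E}(G)|=12|\mathcal{E}(G^*)|$. The only step requiring care is verifying the bijection in Step 3, in particular that the outdegree requirements on $v_1$ and $v_2$ match exactly those imposed by the virtual edge. Everything else is routine.
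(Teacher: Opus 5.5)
Your proof is correct and follows the paper's argument essentially step for step. It uses the outdegree count on $\{v_1,\dots,v_5\}$ to force exactly one boundary edge in and one out, contracts back to $(u_1,u_2)$, and identifies each fibre with $\mathcal{E}(K_5 \mid \overrightarrow{\{(v_1,v_2)\}})$, which has $24/2=12$ elements by Lemma~\ref{lemma-oriented-edge}. Your only departure is fixing the virtual edge explicitly as $v_2\to v_1$, which makes the Step~3 bijection a little more transparent than the paper's wording.
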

        
        \begin{proof}
            Let $(u_1,u_2) \in E(G^*)$ be the removed edge, and let $\{v_1, v_2, v_3, v_4,v
            _5\}$ form the $K_5\setminus\{(v_1,v_2)\}$ subgraph in $G$. 
            The edges $(u_1,v_1)$ and $(u_2,v_2)$ are also added to obtain $G$.
            
            Notice that in any Eulerian orientation of $G$, the sum of the outdegrees of the vertices $v_1, v_2, v_3, v_4,v_5$  must be exactly $5 \cdot 2 = 10$. 
            Since the edges of the $K_5 \setminus \{(v_1,v_2)\}$ subgraph contribute exactly $9$ to this sum, exactly one of the edges $(u_1, v_1)$ and $(u_2, v_2)$ must be directed towards its respective vertex $v_i$, and the other must be directed outwards it.
            Therefore, contracting the vertices of $K_5 \setminus \{(v_1,v_2)\}$ restores the edge $(u_1,u_2)$ and naturally yields a valid Eulerian orientation of $G^*$, where the edges belonging to both $G^*$ and $G$ preserve their orientation, and the edge $(u_1,u_2)$ in $G^*$ is directed towards (outwards) $u_1$ if and only if $(u_1,v_1)$ in $G$ is directed towards (outwards) $u_1$.
            This means that every Eulerian orientation in $\mathcal{E}(G)$ uniquely corresponds to an Eulerian orientation in $\mathcal{E}(G^*)$.
            
            Conversely, consider any Eulerian orientation in $\mathcal{E}(G^*)$.
            When extending this orientation to $G$, the direction of $(u_1, v_1)$ and $(u_2,v_2)$ is obtained by the orientation of the edge $(u_1,u_2)$. 
            To make the respective orientation of $G$ Eulerian, the edges of the $K_5 \setminus \{(v_1,v_2)\}$ subgraph must be oriented such that the single vertex (either $v_1$ or $v_2$) where the  edge $(u_i,v_i)$  is directed outwards has one outgoing edge from the edges of $K_5 \setminus \{(v_1,v_2)\}$. The other $4$ vertices strictly require to have $2$ outgoing edges from the edges of $K_5 \setminus \{(v_1,v_2)\}$.
            This number is equal to $|\mathcal{E}(K_5 \mid \overrightarrow{\{(v_1,v_2)\}})|$ which is $12$ implied by combining Lemma~\ref{lemma-oriented-edge} and the fact that $|\mathcal{E}(K_5)|=24$ (see Appendix~\ref{app:simple_bi}). 
            Consequently, every orientation in $\mathcal{E}(G^*)$ extends to exactly $12$ distinct orientations in $\mathcal{E}(G)$.
            
            Combining both directions implies that there is a $12$-to-$1$ correspondence between $\mathcal{E}(G)$ and $\mathcal{E}(G^*)$.
            Thus, we conclude that $|\mathcal{E}(G)| = 12|\mathcal{E}(G^*)|$.
        
        \end{proof}

        \begin{corollary}
            The number of Eulerian orientations for the class of simple $4$-regular graphs constructed solely via operation $\mathcal{S}_3$ starting from $K_5$ is exactly
            $$24\cdot 12^{(n-5)/5} = 2 \cdot 12^{n/5}.$$
            More generally, given a simple $4$-regular graph $G^*$ with $m$ vertices, any graph $G$ with $n$ vertices constructed solely via operation $\mathcal{S}_3$ starting from $G^*$ has 
            $$|\mathcal{E}(G)| = |\mathcal{E}(G^*)| \cdot 12^{(n-m)/5}$$
            Eulerian orientations.
            This bound grows asymptotically as $\mathcal{O} \left(1.6438^n \right)$.
        \end{corollary}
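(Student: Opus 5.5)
The plan is a straightforward induction on the number $k$ of applications of operation $\mathcal{S}_3$, using the preceding theorem (that a single $\mathcal{S}_3$ move multiplies $|\mathcal{E}|$ by exactly $12$) as the inductive step.

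\textbf{Vertex count.} Since each $\mathcal{S}_3$ move deletes one edge and adds exactly $5$ new vertices, a graph $G$ obtained from $G^*$ (with $m$ vertices) by $k$ such moves has $n = m + 5k$ vertices. Hence $k = (n-m)/5$, which in particular is an integer.

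\textbf{Hypotheses of the theorem.} I must check that every intermediate graph satisfies what the preceding theorem requires, namely that it is a simple $4$-regular graph. Operation $\mathcal{S}_3$ preserves $4$-regularity. The endpoints $u_1,u_2$ of the deleted edge lose the edge $(u_1,u_2)$ but gain $(u_1,v_1)$ and $(u_2,v_2)$. For the new vertices, each of $v_3,v_4,v_5$ has degree $4$ inside $K_5$, while $v_1$ and $v_2$ have degree $3$ inside $K_5 \setminus \{(v_1,v_2)\}$ plus one outside edge. The move also preserves simplicity, because all added edges join either two new distinct vertices or a new vertex to an old one, so no parallel edge can arise. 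Thus each intermediate graph is again simple and $4$-regular. This verification is the only point that needs any care.

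\textbf{Induction.} The base case $k=0$ is trivial. For the step, suppose the graph after $k$ moves has $|\mathcal{E}(G^*)|\cdot 12^{k}$ Eulerian orientations. One further move multiplies this by $12$ by the preceding theorem, giving $|\mathcal{E}(G^*)|\cdot 12^{k+1}$. Therefore
\[
|\mathcal{E}(G)| = |\mathcal{E}(G^*)|\cdot 12^{(n-m)/5}.
\]

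\textbf{Specialization to $K_5$.} Take $G^* = K_5$, so $m = 5$ and $|\mathcal{E}(K_5)| = 24$ (Appendix~\ref{app:simple_bi}). Then
\[
|\mathcal{E}(G)| = 24\cdot 12^{(n-5)/5} = 24\cdot 12^{-1}\cdot 12^{n/5} = 2\cdot 12^{n/5}.
\]

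\textbf{Growth rate.} The asymptotic growth follows from $12^{1/5} = e^{(\ln 12)/5} \approx e^{0.4970} \approx 1.6438$.
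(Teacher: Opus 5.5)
Your proposal is correct and is essentially the paper's argument: the paper states the corollary as an immediate consequence of the one-step identity $|\mathcal{E}(G)|=12|\mathcal{E}(G^*)|$, iterated over the $k=(n-m)/5$ applications of $\mathcal{S}_3$, and then sets $m=5$ and $|\mathcal{E}(K_5)|=24$. Your explicit check that $\mathcal{S}_3$ preserves simplicity and $4$-regularity, so that the theorem applies at every step, is a detail the paper leaves implicit, and $12^{1/5}\approx 1.6438$ indeed gives the stated growth rate.
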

        
        Regarding the operation $\mathcal{S}_1$, a closed formula could not be derived directly in a similar manner as for $\mathcal{S}_2$ and $\mathcal{S}_3$. 
        
        Let $G$ be obtained from $G^*$ by applying operation $\mathcal{S}_1$ exactly once. 
        There exist Eulerian orientations of $G$, that do not correspond to an Eulerian orientations of $G^*$, in the sense that their common edges have the same direction in both Eulerian orientations.
        Specifically, consider an Eulerian orientation of $G$ in which both $(u_1,v)$ and $(u_2,v)$ are directed towards $v$, and consequently, $(u_3,v)$ and $(u_4,v)$ are directed outwards $v$ (towards $u_3$ and $u_4$, respectively). 
        Suppose that there exists an orientation of $G^*$, such that the shared edges with $G$ have the exact same orientations.
        Because $(u_1,v)$ and $(u_2,v)$ act as outgoing edges for $u_1$ and $u_2$ in $G$, these vertices must already possess an indegree of $2$ and an outdegree of $1$ strictly from their shared edges to satisfy the Eulerian condition. 
        Thus, in $G^*$, both $u_1$ and $u_2$ already possess an indegree of $2$ from these common edges. 
        This means that orienting the edge $(u_1, u_2)$ in either direction increases the indegree for one vertex, thus the orientation is not Eulerian (similarly orienting the edge $(u_3, u_4)$ increases the outdegree for one of the vertices).
        Therefore, no such direct orientation correspondence exists, unlike in the cases of operations $\mathcal{S}_2$ and $\mathcal{S}_3$.

    \subsection{Construction operations for connected loopless \texorpdfstring{$4$}{4}-regular multigraphs}
    
        Recall that the $4$-bond multigraph is the graph that consists of 2 vertices and 4 parallel edges between them.
        The generation of all connected loopless $4$-regular multigraphs from $4$-bond multigraph relies on the following two construction extensions (see Figure~\ref{fig:mconstr}).
        
        \begin{enumerate}
            \item[$\mathcal{M}_1$:] Let $(u_1, u_2)$ and $(u_3, u_4)$ be two edges of the graph (which may share an endpoint) . 
            The operation consists of removing the two edges $(u_1, u_2)$ and $(u_3, u_4)$ and introducing a new vertex $v$, which is then connected to the endpoints of the removed edges by adding the edges $(u_1,v), (u_2,v), (u_3,v)$, and $(u_4,v)$.
            If a vertex $u_i$ happens to be an endpoint for both removed edges, $v$ and $u_i$ will naturally be connected by two parallel edges.
            This operation increases the number of vertices in the graph by $1$.
            
            \item[$\mathcal{M}_2$:] Let $(u_1, u_2)$ be an edge of the graph. 
            The operation consists of removing the edge $(u_1, u_2)$ and introducing two new vertices, $v_1$ and $v_2$, connecting them to each other using $3$ parallel edges, and adding the edges $(u_1, v_1)$ and $(u_2, v_2)$. 
            This operation increases the number of vertices in the graph by $2$.
        \end{enumerate}
        
            These two operations are completely sufficient to generate the entire class of connected $4$-regular multigraphs starting from the $4$-bond graph. 
            This follows from a more general result established in \cite{DING2003329} for all connected regular multigraphs with an even degree. 
            The following theorem specializes this result to $4$-regular multigraphs, which are the primary focus of this paper.
        
        \begin{theorem}[\cite{DING2003329}]
            Every connected $4$-regular multigraph can be generated from the $4$-bond graph by a finite sequence of the two operations $\mathcal{M}_1$ and $\mathcal{M}_2$.
        \end{theorem}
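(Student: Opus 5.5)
This theorem is a generation result cited from \cite{DING2003329}, so the plan is to reverse the construction: show that every connected loopless $4$-regular multigraph $G$ other than the $4$-bond multigraph $M_2$ can be \emph{reduced} to a smaller connected loopless $4$-regular multigraph by inverting $\mathcal{M}_1$ or $\mathcal{M}_2$. Induction on $n=|V(G)|$ then does the rest. The base case is immediate. A $4$-regular loopless multigraph on two vertices must have four parallel edges, so it is $M_2$, and there are none with fewer vertices.

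The inverse operations are as follows. The inverse of $\mathcal{M}_1$ takes a vertex $v$ with neighbours (counted with multiplicity) $u_1,u_2,u_3,u_4$. It deletes $v$ and adds the edges $(u_1,u_2)$ and $(u_3,u_4)$ for a suitable pairing of the four edge-ends. For the result to be legal we need two things. First, no loop is created, so the paired ends must have distinct endpoints. Second, the result stays connected. The inverse of $\mathcal{M}_2$ applies when $G$ contains a triple edge $v_1v_2$. Then $v_1$ and $v_2$ each have exactly one further neighbour, $u_1$ and $u_2$. Delete $v_1,v_2$ and add $(u_1,u_2)$. This is legal when $u_1\neq u_2$. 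If $u_1=u_2$, then $u_1$ has two edges to $\{v_1,v_2\}$, so $u_1$ is a cut vertex, or $G$ has exactly three vertices.

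The key step is the case analysis at a vertex $v$ of $G$ with $n\ge 3$. Consider the multiset of neighbours of $v$.

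\textbf{Case 1.} $v$ has a triple edge to some $w$. Then apply the inverse of $\mathcal{M}_2$ to the pair $v,w$. If the two outside neighbours coincide, the graph is that neighbour joined by single edges to $v$ and $w$. Either this neighbour is a cut vertex, or $n=3$ and the neighbour has degree $2$, which is a contradiction. In the cut-vertex case, choose a different vertex instead.

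\textbf{Case 2.} No vertex has multiplicity $\ge 3$. Among the three perfect matchings of the four ends at $v$, at least two avoid pairing equal endpoints. This holds because each neighbour appears at most twice.

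For connectivity, use Observation~\ref{obs:hand}-style reasoning. $G\setminus v$ has at most two components, since $G$ is bridgeless and each component receives an even number $\ge 2$ of edges from $v$. If $G\setminus v$ is connected, any loop-free pairing keeps the graph connected. If it has two components, each receiving two edges, choose the pairing that matches one end in each component with one in the other. Both new edges then cross between the components, so connectivity is restored. This pairing cannot create a loop, because the endpoints lie in different components.

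The main obstacle is the interaction between Case~1 and connectivity when $G$ is small or heavily separable. This covers configurations such as two triple-bonded pairs hanging off a common cut vertex, or a graph made only of double edges. I would first try to show that a suitable vertex can always be chosen. A vertex lying in a leaf block of the block-cut tree, or any vertex in the biconnected case, should work. Otherwise I would fall back to always using the inverse of $\mathcal{M}_1$ at a vertex of maximal-multiplicity adjacency, treating the triple-edge case by the pairing that splits the triple. For example, if $v$ is joined to $w$ by three edges and to $x$ by one, pair $(w,x)$ and $(w,w)$. The second pair is a loop, so in that case the inverse of $\mathcal{M}_2$ is genuinely needed. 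I would verify the leftover small cases ($n\le 4$) by hand.
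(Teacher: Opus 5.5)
The paper gives no proof of this statement; it quotes it from Ding and Chen. So your reverse induction has to be judged on its own.

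Its ingredients are correct:
\begin{itemize}
\item the base case;
\item the description of the two inverse operations;
\item the count of loop-free pairings at a vertex whose edge multiplicities are at most $2$;
\item the connectivity argument: any loop-free pairing works if $G\setminus v$ is connected, and a crossing pairing works if $G\setminus v$ splits into two components each receiving two edges.
\end{itemize}
For the inverse of $\mathcal{M}_2$ with $u_1\neq u_2$ you did not check connectivity. It is immediate, since every component of $G\setminus\{v,w\}$ contains $u_1$ or $u_2$.

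The genuine gap is the one you flag yourself. When the inverse of $\mathcal{M}_2$ is blocked because the two outside neighbours coincide in a cut vertex $u$, you only say ``choose a different vertex instead''. You never show that a suitable vertex exists. Your leaf-block suggestion is not argued, and it is false for arbitrary vertices of a leaf block: in the leaf block $\{u,v,w\}$, the vertices $v$ and $w$ admit neither inverse operation. Your fallback of splitting the triple edge with an inverse of $\mathcal{M}_1$ always creates a loop, as you note. So the induction step is not established for separable graphs.

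What you are missing is that your Case~2 argument is unconditional. It works at \emph{every} vertex all of whose edge multiplicities are at most $2$, with no hypothesis on biconnectivity. So the only vertices that can fail are those carrying a triple edge.

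In the blocked configuration, the common neighbour $u$ spends one edge on $v$ and one on $w$, so it has no triple edge. The inverse of $\mathcal{M}_1$ at $u$ is then legal: pair $v$ and $w$ with the two remaining neighbours of $u$. This is a crossing pairing across the two components $\{v,w\}$ and the rest of $G\setminus u$.

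Globally, the induction step reads as follows. If some vertex has no edge of multiplicity $\ge 3$, reduce there. Otherwise, excluding $G=M_2$, every vertex has a unique triple-bond partner. The two outside neighbours of each triple-bonded pair are then automatically distinct, because a common outside neighbour would have no triple edge. This closes the induction. It also disposes of every configuration you list (two pairs hanging off a common cut vertex, double cycles, small $n$) without separate case checks.
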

    
        \begin{figure}	
        	\begin{center}
        		\begin{tabular}{ccc}
        			
        			\MIone \hspace{2mm}	& \hspace{2mm} \MItwo  & \hspace{2mm} \MII \\
        	
        		\end{tabular} 
        	\end{center}
        	\caption{
            Construction operations for connected loopless $4$-regular multigraphs~\cite{DING2003329}. 
            Regarding operation $\mathcal{M}_1$, the figure illustrates the cases where the two removed edges share either one or both endpoints. 
            If the edges do not share any common endpoint, the operation is structurally identical to the operation $\mathcal{S}_1$ for simple graphs. }
        	\label{fig:mconstr}
        \end{figure}
    
        As with simple graphs, we establish a closed formula for operation $\mathcal{M}_2$.
        
        \begin{theorem}
            Let $G$ and $G^*$ be two $4$-regular multigraphs such that $G$ is obtained from $G^*$ by applying operation $\mathcal{M}_2$ exactly once. 
            Then $|\mathcal{E}(G)| = 3|\mathcal{E}(G^*)|$.
        \end{theorem}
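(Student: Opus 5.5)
The plan is to follow the template of the $\mathcal{S}_3$ proof: establish a $3$-to-$1$ correspondence between $\mathcal{E}(G)$ and $\mathcal{E}(G^*)$. Let $(u_1,u_2)\in E(G^*)$ be the removed edge. Let $v_1,v_2$ be the new vertices joined by three parallel edges, with the added edges $(u_1,v_1)$ and $(u_2,v_2)$.

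\textbf{Step 1: the two attaching edges are oppositely oriented.} In any Eulerian orientation of $G$ the outdegrees of $v_1,v_2$ sum to $4$. The three parallel edges contribute exactly $3$ to this sum. Hence exactly one of $(u_1,v_1)$, $(u_2,v_2)$ is directed out of $\{v_1,v_2\}$ and the other into it. This is the same counting used in Lemma~\ref{lem:cut}; alternatively, observe that $\{v_1,v_2\}$ is separated from the rest by a $2$-edge cut. Contracting $\{v_1,v_2\}$ and restoring $(u_1,u_2)$ then gives an orientation of $G^*$. In it, $(u_1,u_2)$ is directed towards $u_2$ exactly when $(u_1,v_1)$ leaves $u_1$, which forces $(u_2,v_2)$ to enter $u_2$. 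The vertices $u_1,u_2$ keep their in- and outdegrees, so the result lies in $\mathcal{E}(G^*)$.

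\textbf{Step 2: count the extensions.} Fix an orientation in $\mathcal{E}(G^*)$, and suppose without loss of generality that $(u_1,u_2)$ is directed from $u_1$ to $u_2$. Then $(u_1,v_1)$ enters $v_1$ and $(v_2,u_2)$ leaves $v_2$. Vertex $v_1$ needs outdegree $2$ from the three parallel edges, and $v_2$ needs outdegree $1$ from them; these two requirements are consistent. So exactly $\binom{3}{2}=3$ choices exist, one for each parallel edge that is directed from $v_2$ to $v_1$. Since the parallel edges are distinct elements of the edge multiset, these give three distinct orientations. Equivalently, the count equals $|\mathcal{E}(M_2\mid\overrightarrow{\{e\}})|=6/2=3$ by Lemma~\ref{lemma-oriented-edge} applied to the $4$-bond multigraph, where the fourth parallel edge plays the role of the deleted edge. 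This mirrors the $K_5$ argument exactly.

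\textbf{Step 3: conclude.} The map of Step 1 sends each orientation of $G$ to one of $G^*$. By Step 2, each fiber of this map has size exactly $3$, so $|\mathcal{E}(G)|=3|\mathcal{E}(G^*)|$. I expect no real obstacle. The only point needing care is stating that parallel edges are treated as distinguishable, consistent with the multiset convention adopted in Section~\ref{sec:pre}, so that the three choices really are counted separately.
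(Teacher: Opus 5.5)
Your proof is correct and follows essentially the same route as the paper. The outdegree count on $\{v_1,v_2\}$ forces the two attaching edges to be oppositely oriented, contraction gives the map to $\mathcal{E}(G^*)$, and each fiber has size $\binom{3}{2}=\binom{3}{1}=3$. Your reformulation of the fiber count as $|\mathcal{E}(M_2\mid\overrightarrow{\{e\}})|=3$ via Lemma~\ref{lemma-oriented-edge} is a nice optional touch that makes the parallel with the $\mathcal{S}_3$ proof explicit.
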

        
        \begin{proof}
            Let $(u_1,u_2)$ be the removed edge of $G^*$, and let $v_1$ and $v_2$ be the added vertices in $G$ under operation $\mathcal{M}_2$. 
            Let also $(u_1,v_1)$ and $(u_2,v_2)$ be the added edges, and let $e_1, e_2$, and $e_3$ be the three parallel edges connecting $v_1$ and $v_2$ in $G$. 
            
            In any Eulerian orientation of $G$, the sum of the outdegrees of the vertices $v_1, v_2$  must be exactly $2 \cdot 2 = 4$. 
            The edges $e_1, e_2, e_3$ contribute exactly $3$ to this sum. 
            At least one of these edges must be directed towards $v_1$, and at least one must be directed towards $v_2$; otherwise, either $v_1$ or $v_2$ would be forced to have outdegree of $3$.
            This implies that in every Eulerian orientation of $G$ exactly one of the following two conditions is true:
         \begin{enumerate}
             \item $(u_1,v_1)$ is oriented towards $v_1$.
             \item $(u_2,v_2)$ is oriented towards $v_2$.
        \end{enumerate}
            Therefore, contracting the vertices $v_1$ and $v_2$ restores the edge $(u_1,u_2)$ and naturally yields a valid Eulerian orientation of $G^*$, where the edges belonging to both $G^*$ and $G$ preserve their orientation, and the edge $(u_1,u_2)$ in $G^*$ is directed towards (outwards) $u_1$ if and only if $(u_1,v_1)$ in $G$ is directed towards (outwards) $u_1$.
            This means that every Eulerian orientation in $\mathcal{E}(G)$ corresponds to an Eulerian orientation in $\mathcal{E}(G^*)$.
            
            Conversely, consider any Eulerian orientation in $\mathcal{E}(G^*)$.  
            When extending this orientation to $G$, the direction of $(u_1, v_1)$ and $(u_2,v_2)$ is strictly determined by the orientation of the edge $(u_1,u_2)$. 
            To make the respective orientation of $G$ Eulerian, the  edges $e_1, e_2, e_3$ must be oriented such that the single vertex (either $v_1$ or $v_2$) where the respective edge  $(u_i,v_i)$ is directed outwards (towards) it has exactly one (two) of the edges $e_1, e_2, e_3$ oriented outwards. 
            Since there are $\binom{3}{1}$ ways to orient one of the edges outwards the vertex with outdegree~$1$, every orientation in $\mathcal{E}(G^*)$ extends to exactly $3$ distinct orientations in $\mathcal{E}(G)$.
            
            Combining both directions implies that there is a $3$-to-$1$ correspondence between $\mathcal{E}(G)$ and $\mathcal{E}(G^*)$. 
            Thus, we conclude that $|\mathcal{E}(G)| = 3|\mathcal{E}(G^*)|$.
        \end{proof}

        \begin{corollary}
            The number of Eulerian orientations for the class of $4$-regular multigraphs constructed solely via operation $\mathcal{M}_2$ starting from the $4$-bond multigraph is exactly
            $$ 6\cdot 3^{(n-2)/2} = 2 \cdot 3^{n/2}. $$
            More generally, given a $4$-regular graph $G^*$ with $m$ vertices, any multigraph $G$ in $n$ vertices constructed solely via operation $\mathcal{M}_2$ starting from $G^*$ has 
            $$ |\mathcal{E}(G)| = |\mathcal{E}(G^*)| \cdot 3^{(n-m)/2} $$
            Eulerian orientations.
            This bound grows asymptotically as $\mathcal{O} \left(1.7321^n \right)$.
        \end{corollary}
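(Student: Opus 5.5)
The plan is to argue by induction on the number $k$ of applications of operation $\mathcal{M}_2$, using the preceding theorem, which states $|\mathcal{E}(G)| = 3|\mathcal{E}(G^*)|$ for a single application, as the inductive step.

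\textbf{Counting applications.} Let $G^* = G_0, G_1, \dots, G_k = G$ be the sequence of multigraphs in the construction, where $G_{j}$ is obtained from $G_{j-1}$ by one application of $\mathcal{M}_2$. Each application adds exactly two vertices, $v_1$ and $v_2$, and removes none. Hence $|V(G_j)| = m + 2j$. In particular $n = m + 2k$, so $k = (n-m)/2$, and $n-m$ is necessarily even.

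\textbf{Induction.} We also need each intermediate $G_j$ to be a connected loopless $4$-regular multigraph, so that the preceding theorem applies at every step. This holds because $\mathcal{M}_2$ preserves these properties:
\begin{itemize}
\item the new vertices have degree $3+1=4$;
\item $u_1$ and $u_2$ each lose the edge $(u_1,u_2)$ and gain one new edge, so their degrees are unchanged;
\item no loop is introduced;
\item connectivity is kept, since the path $u_1 v_1 v_2 u_2$ replaces the deleted edge $(u_1,u_2)$.
\end{itemize}
The claim $|\mathcal{E}(G_j)| = |\mathcal{E}(G^*)| \cdot 3^{j}$ is trivial for $j=0$. If it holds for $j-1$, the preceding theorem gives $|\mathcal{E}(G_j)| = 3|\mathcal{E}(G_{j-1})| = |\mathcal{E}(G^*)|\cdot 3^{j}$. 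Setting $j=k=(n-m)/2$ yields the general formula.

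\textbf{Specialization to the $4$-bond multigraph.} Take $G^*$ to be the $4$-bond multigraph $M_2$, so $m=2$. By the proof of Theorem~\ref{th:multi_b} (or directly: choose which $2$ of the $4$ parallel edges leave the first vertex), $|\mathcal{E}(M_2)| = \binom{4}{2} = 6$. Substituting gives
\[
|\mathcal{E}(G)| = 6\cdot 3^{(n-2)/2} = 6\cdot 3^{-1}\cdot 3^{n/2} = 2\cdot 3^{n/2}.
\]
The asymptotic statement follows from $3^{n/2} = (\sqrt{3})^n$ with $\sqrt{3}\approx 1.7321$.

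The only point requiring care is confirming that the hypothesis of the preceding theorem holds at every intermediate stage, which the closure check above settles. Everything else is a routine telescoping product.
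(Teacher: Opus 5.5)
Your proposal is correct and matches the paper's implicit argument: the corollary follows by applying the preceding theorem $(n-m)/2$ times, since each application of $\mathcal{M}_2$ adds two vertices and multiplies the count by $3$, and then specializing to $|\mathcal{E}(M_2)|=\binom{4}{2}=6$ with $m=2$. Your closure check is harmless but more than needed, because the preceding theorem assumes only that the graphs are (loopless) $4$-regular, not that they are connected.
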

        
        A closed formula could not be derived directly in a similar manner for $\mathcal{M}_1$, for the same reason as in the simple graph case and the operation $\mathcal{S}_1$.

    \section{Conclusion and Future Work.}\label{sec:concl}

        In this work, we have established improved upper bounds for the number of Eulerian orientations of connected loopless $4$-regular graphs across three distinct categories: biconnected multigraphs, biconnected simple graphs, and separable connected simple graphs.
        Furthermore, we demonstrated that the biconnected multigraph bound is sharp and realizable.
        Additionally, in the case of simple graphs, we executed exhaustive computations for all graphs up to $15$ vertices.
        We also presented a formula that computes the exact number of orientations of separable connected graphs given the structure of the graph and the number of orientations of certain subgraphs, leading to a divide-and-conquer algorithm.
        Using this method, we also presented a class of simple separable connected graphs that strictly surpasses the upper bound we derived for simple biconnected graphs, conjecturing that this class attains the global upper bound for simple separable graphs.
        Finally, we provided closed formulas for the effect of specific construction operations on the number of Eulerian orientations.

        Regarding future work, the highest priority is to address and mathematically resolve Conjecture~\ref{con:simple}.
        For this purpose, we may consider a technique exploiting the $2$-ARS structure; the delicate aspect here is that an inner $2$-ARS within a simple graph may have a multigraph as its regular representative (see Figure~\ref{fig:ARSexamples}), so one should simultaneously utilize the bounds for both biconnected simple graphs and multigraphs.
        We also aim to lower the established upper bound for biconnected simple graphs, since our computations indicate that it is rather loose, at least for graphs with a small number of vertices.
        
        Finally, another direction is to explore $(\kappa, \lambda)$-tight graphs, i.e., graphs $G=(V,E)$ such that $|E|=\kappa \cdot |V|-\lambda$ and $|E^\prime| \leq \kappa \cdot |V^\prime|-\lambda$ for every induced subgraph $G^\prime=(V^\prime,E^\prime) \subseteq G$ with $|V^\prime| \geq \kappa$, and seek bounds for outdegree-constrained orientations with a maximum outdegree of $\kappa$.
        These graphs apply to rigidity theory and generally to geometric constraint problems.

	\bibliographystyle{plain}
    \bibliography{orientations}

@article{Asahiro2022,
title = {Upper and lower degree-constrained graph orientation with minimum penalty},
journal = {Theoretical Computer Science},
volume = {900},
pages = {53-78},
year = {2022},
issn = {0304-3975},
doi = {https://doi.org/10.1016/j.tcs.2021.11.019},
url = {https://www.sciencedirect.com/science/article/pii/S0304397521006964},
author = {Y. Asahiro and J. Jansson and E. Miyano and H. Ono}
}

@inproceedings{betISSAC,
author = {Bartzos, E. and Emiris, I. Z. and Kotsireas, I. S. and Tzamos, C.},
title = {Bounding the Number of Roots of Multi-Homogeneous Systems},
year = {2022},
isbn = {9781450386883},
publisher = {Association for Computing Machinery},
address = {New York, NY, USA},
url = {https://doi.org/10.1145/3476446.3536189},
doi = {10.1145/3476446.3536189},
booktitle = {Proceedings of the 2022 International Symposium on Symbolic and Algebraic Computation},
pages = {255–262}}

@article{bet,
title = {An asymptotic upper bound for graph embeddings},
journal = {Discrete Applied Mathematics},
volume = {327},
pages = {157-177},
year = {2023},
issn = {0166-218X},
doi = {https://doi.org/10.1016/j.dam.2022.12.010},
url = {https://www.sciencedirect.com/science/article/pii/S0166218X22004607},
author = {E. Bartzos and I. Z. Emiris and C. Tzamos},
}

@Article{bev,
author= {Bartzos, E. and Emiris, I.Z. and Vidunas, R.},
journal={Discrete \& Computational Geometry},
title = {New Upper Bounds for the Number of Embeddings of Minimally Rigid Graphs},
year = 2022, 
url={https://doi.org/10.1007/s00454-022-00370-3}
}

@misc{code_mBezout,
  author       = {Bartzos, E. and Schicho, J.},
  title        = {Source code and examples for the paper ``{On} the multihomogeneous {B\'ezout} bound on the number of embeddings of minimally rigid graphs''},
  month        = Nov,
  year         = {2019},
  publisher    = {Zenodo},
  note          = {https://doi.org/10.5281/zenodo.3542061}
}

@misc{dataforpaper,
  author       = {Bartzos, E. and Samaris, M.},
  title        = {Code, results and examples to accompany the paper "Computing and Bounding the Number of Eulerian Orientations for Certain Classes of 4-Regular Graphs"},
  year         = {2026},
  publisher    = {Zenodo},
  note          = {https://doi.org/10.5281/zenodo.20842790}
}

@incollection{bories1983,
title = {Construction of 4-Regular Graphs},
editor = {C. Berge and D. Bresson and P. Camion and J.F. Maurras and F. Sterboul},
series = {North-Holland Mathematics Studies},
publisher = {North-Holland},
volume = {75},
pages = {99-118},
year = {1983},
booktitle = {Combinatorial Mathematics},
issn = {0304-0208},
doi = {https://doi.org/10.1016/S0304-0208(08)73378-4},
url = {https://www.sciencedirect.com/science/article/pii/S0304020808733784},
author = {F. Bories and J.-L. Jolivet and J.-L. Fouquet}}

@article{DING2003329,
title = {Generating r-regular graphs},
journal = {Discrete Applied Mathematics},
volume = {129},
number = {2},
pages = {329-343},
year = {2003},
issn = {0166-218X},
doi = {https://doi.org/10.1016/S0166-218X(02)00593-0},
url = {https://www.sciencedirect.com/science/article/pii/S0166218X02005930},
author = {Guoli Ding and Peter Chen}
}

@Article{Felsner,
author="Felsner, S. and Zickfeld, F.",
title="On the Number of Planar Orientations with Prescribed Degrees",
journal="The Electronic Journal of Combinatorics",
year="2008",
month="Jun",
volume="15",
number="01",
pages="Research paper R77",
url="https://doi.org/10.37236/801 "
}

@inproceedings{Shortest_Paths,
author = {Frigioni, D. and Marchetti-Spaccamela, A. and Nanni, U.},
title = {Fully Dynamic Shortest Paths and Negative Cycles Detection on Digraphs with Arbitrary Arc Weights},
year = {1998},
publisher = {Springer-Verlag},
pages = {320–331},
numpages = {12},
booktitle = {Proceedings of the 6th Annual European Symposium on Algorithms},
series = {ESA '98}
}

@book{Harary69,
  Author         = {Harary, F.},
  Title          = {Graph Theory},
  Publisher      = {Addison-Wesley},
  year           = 1969
}

@InProceedings{matching,
author="He, M.
and Tang, G.
and Zeh, N.",
editor="Ahn, H.-K.
and Shin, C.-S.",
title="Orienting Dynamic Graphs, with Applications to Maximal Matchings and Adjacency Queries",
booktitle="Algorithms and Computation",
year="2014",
publisher="Springer International Publishing",
address="Cham",
pages="128--140",
isbn="978-3-319-13075-0"
}

@incollection{LasVergnas,
title = {Le Polynôme De Martin D'un Graphe Eulerien},
editor = {C. Berge and D. Bresson and P. Camion and J.F. Maurras and F. Sterboul},
series = {North-Holland Mathematics Studies},
publisher = {North-Holland},
volume = {75},
pages = {397-411},
year = {1983},
booktitle = {Combinatorial Mathematics},
issn = {0304-0208},
doi = {https://doi.org/10.1016/S0304-0208(08)73415-7},
url = {https://www.sciencedirect.com/science/article/pii/S0304020808734157},
author = {M. {Las Vergnas}}
}

@article{LasVergnas2,
title = {An upper bound for the number of Eulerian orientations of a regular graph},
journal = {Combinatorica},
volume = {10},
issue={1},
pages = {61-65},
year = {1990},
issn = {1439-6912},
doi = {https://doi.org/10.1007/BF02122696},
author = {M. {Las Vergnas}},
}

@article{nauty,
title = {Practical graph isomorphism, II},
journal = {Journal of Symbolic Computation},
volume = {60},
pages = {94-112},
year = {2014},
issn = {0747-7171},
doi = {https://doi.org/10.1016/j.jsc.2013.09.003},
url = {https://www.sciencedirect.com/science/article/pii/S0747717113001193},
author = {B. D. McKay and A. Piperno}
}

@article{MihWin,
title = {On the number of Eulerian orientations of a graph},
journal = {Algorithmica},
volume = {16},
issue={4},
pages = {402-414},
year = {1996},
issn = {1432-0541},
doi = {https://doi.org/10.1007/BF01940872},
author = {Mihail, M. and Winkler, P.},
}

@article{Punzi,
title = {Refined Bounds on the Number of Eulerian Tours in Undirected Graphs},
journal = {Algorithmica},
volume = {86},
issue={1},
pages = {194-217},
year = {2024},
issn = {1432-0541},
doi = {https://doi.org/10.1007/s00453-023-01162-8},
author = {Punzi, G. and Conte, A. and Rizzi, R.},
}

@Article{Schrijver1983,
author="Schrijver, A.",
title="Bounds on the number of Eulerian orientations",
journal="Combinatorica",
year="1983",
month="Sep",
volume="3",
number="3",
pages="375--380",
url="https://doi.org/10.1007/BF02579193"
}

@article{graph_coloring,
author = {Solomon, S. and Wein, N.},
title = {Improved Dynamic Graph Coloring},
year = {2020},
issue_date = {July 2020},
publisher = {Association for Computing Machinery},
volume = {16},
number = {3},
issn = {1549-6325},
url = {https://doi.org/10.1145/3392724},
doi = {10.1145/3392724},
journal = {ACM Trans. Algorithms},
month = jun,
articleno = {41}
}

@article{Stefa,
title = {The Complexity of Counting Eulerian Tours in $4$-regular Graphs},
journal = {Algorithmica},
volume = {63},
issue={3},
pages = {588-601},
year = {2024},
issn = {1432-0541},
doi = {https://doi.org/10.1007/s00453-010-9463-4},
author = {Ge, Q. and Štefankovič, Daniel},
}

@misc{welsh1990,
  title={The computational complexity of some classical problems from statistical physics},
  author={Welsh, D.J.A.},
  journal={Disorder in physical systems},
  volume={307},
  pages={307--321},
  year={1990},
  publisher={Clarendon Press, Oxford}
}
    
    \newpage
    
    \appendix

    \section*{Appendix}
    \section{Simple graphs with the maximum number of orientations}\label{app:simple}

        In this appendix, we provide illustrations of the simple $4$-regular graphs that maximize the number of Eulerian orientations for both the biconnected and separable cases (see Table~\ref{tab:comps}).

    \subsection{Simple biconnected graphs}\label{app:simple_bi}

        \begin{center}

            \begin{tabular}{cccc}
                $\bm{n=5}$ &  $\bm{n=6}$ & \multicolumn{2}{c}{$\bm{n=7}$ (2 graphs) } \vspace{2mm}\\
                \Kfive & \Hocta &
                \Gsevenfirst & \Gsevensecond  \\
                $K_5$ & $G_6$ & $G_{7a}$ & $G_{7b}$ \vspace{1.5mm}\\
                $|\mathcal{E}(K_5)| =24 $ &  $|\mathcal{E}(G_6)| =38 $ & \multicolumn{2}{c}{$|\mathcal{E}(G_{7a})|= |\mathcal{E}(G_{7b})|= 60$} \vspace{1.5mm}\\
            \end{tabular}
            \noindent\rule{\textwidth}{0.4pt}
            \bigskip

            \begin{tabular}{cccc}
                \multicolumn{3}{c}{$\bm{n=8}$ (3 graphs)} & $\bm{n=9}$ \vspace{2mm}\\
                \GeightA & \GeightB & \GeightC & \Gnine  \\
                $G_{8a}$ & $G_{8b}$ & $G_{8c}$ & $G_{9}$ \vspace{1.5mm}\\
                \multicolumn{3}{c}{$|\mathcal{E}(G_{8a})|= |\mathcal{E}(G_{8b})|= |\mathcal{E}(G_{8c})|= 96$} & $|\mathcal{E}(G_9)| =160 $ \vspace{1.5mm}\\
            \end{tabular}
            \noindent\rule{\textwidth}{0.4pt}
            \bigskip

            \begin{tabular}{ccc}
                \multicolumn{2}{c}{$\bm{n=10}$ (2 graphs)} \hspace{5mm} & \hspace{5mm} $\bm{n=11}$ \vspace{2mm}\\
                \GtenA & \GtenB   \hspace{5mm} &  \hspace{5mm} \Geleven  \\
                $G_{10a}$ &  $G_{10b}$ \hspace{5mm} & \hspace{5mm} $G_{11}$ \vspace{1.5mm}\\
                \multicolumn{2}{c}{$|\mathcal{E}(G_{10a})|= |\mathcal{E}(G_{10b})|= 288$}  \hspace{5mm} & \hspace{5mm} $|\mathcal{E}(G_{11})| =480 $ \vspace{1.5mm}\\
            \end{tabular}
            \noindent\rule{\textwidth}{0.4pt}
            \bigskip

            \begin{tabular}{cc}
                $\bm{n=12}$  \hspace{5mm} & \hspace{5mm} $\bm{n=13}$ \vspace{2mm}\\
                \Gtwelve    \hspace{5mm} &  \hspace{5mm} \Gthirteen  \\
                $G_{12}$  \hspace{5mm} & \hspace{5mm} $G_{13}$ \vspace{1.5mm}\\
                $|\mathcal{E}(G_{12})| =800 $  \hspace{5mm} & \hspace{5mm} $|\mathcal{E}(G_{13})| =1344 $ \vspace{1.5mm}\\
            \end{tabular}
            \noindent\rule{\textwidth}{0.4pt}
            \bigskip

            \begin{tabular}{cc}
                $\bm{n=14}$  \hspace{5mm} & \hspace{5mm} $\bm{n=15}$ \vspace{2mm}\\
                \Gfourteen    \hspace{5mm} &  \hspace{5mm} \Gfifteen  \\
                $G_{14}$  \hspace{5mm} & \hspace{5mm} $G_{15}$ \vspace{1.5mm}\\
                $|\mathcal{E}(G_{14})| =2304 $  \hspace{5mm} & \hspace{5mm} $|\mathcal{E}(G_{15})| =4224 $ \vspace{1.5mm}\\
            \end{tabular}
            \noindent\rule{\textwidth}{0.4pt}
            \bigskip
                    
        \end{center}

    \subsection{Simple separable graphs}\label{app:simple_sep}

        \begin{center}
            \begin{tabular}{ccc}
                $\bm{n=11}$  \hspace{2.5mm} & \hspace{2.5mm} $\bm{n=12}$ & \hspace{2.5mm} $\bm{n=13}$ \vspace{2mm}\\
                \Heleven    \hspace{2.5mm} &  \hspace{2.5mm} \Htwelve &  \hspace{2.5mm} \Hthirteen  \\
                $H_{11}$  \hspace{2.5mm} & \hspace{2.5mm} $H_{12}$ &  \hspace{2.5mm} $H_{13}$ \vspace{1.5mm} \\
                $|\mathcal{E}(H_{11})| =576 $  \hspace{2.5mm} & \hspace{2.5mm} $|\mathcal{E}(H_{12})| =960$ & \hspace{2.5mm} $|\mathcal{E}(H_{13})| =1600 $ \vspace{1.5mm}\\
            \end{tabular}
            \noindent\rule{\textwidth}{0.4pt}
            \bigskip

            \begin{tabular}{cc}
                $\bm{n=14}$  \hspace{5mm} & \hspace{5mm} $\bm{n=15}$ \vspace{2mm}\\
                \Hfourteen    \hspace{5mm} &  \hspace{5mm} \Hfifteen  \\
                $H_{14}$  \hspace{5mm} & \hspace{5mm} $H_{15}$ \vspace{1.5mm}\\
                $|\mathcal{E}(H_{14})| =2688 $  \hspace{5mm} & \hspace{5mm} $|\mathcal{E}(H_{15})| =4608 $ \vspace{1.5mm}\\
            \end{tabular}

        \end{center}

\end{document}